\newtheorem{proposition}{Proposition}[section]
\newtheorem{theorem}[proposition]{Theorem}
\newtheorem{lemma}[proposition]{Lemma}
\newtheorem{corollary}[proposition]{Corollary}
\theoremstyle{definition}
\newtheorem*{definition}{Definition}
\theoremstyle{remark}
\newtheorem*{remark}{Remark}
\numberwithin{equation}{section}
\newcommand{\R}{\mathbb{R}}
\newcommand{\Z}{\mathbb{Z}}
\newcommand{\N}{\mathbb{N}}
\newcommand{\Q}{\mathbb{Q}}
\newcommand{\C}{\mathbb{C}}
\renewcommand{\Im}[1]{\,\mathrm{Im}\left(#1\right)}
\renewcommand{\Re}[1]{\,\mathrm{Re}\left(#1\right)}
\newcommand{\pdiff}[2]{\frac{\partial #1}{\partial #2}} 
\newcommand{\hipdiff}[3]{\frac{\partial^#1 #2}{\partial #3 ^{#1}}}
\def\moverlay{\mathpalette\mov@rlay}
\def\mov@rlay#1#2{\leavevmode\vtop{%
		\baselineskip\z@skip \lineskiplimit-\maxdimen
		\ialign{\hfil$\m@th#1##$\hfil\cr#2\crcr}}}
\newcommand{\charfusion}[3][\mathord]{
	#1{\ifx#1\mathop\vphantom{#2}\fi
		\mathpalette\mov@rlay{#2\cr#3}
	}
	\ifx#1\mathop\expandafter\displaylimits\fi}
\newcommand{\modc}[1]{\,\left(\mathrm{mod}\, #1\right)}
\newcommand{\mc}[1]{\mathcal{#1}}
\newcommand{\mf}[1]{\mathfrak{#1}}
\newcommand{\mr}[1]{\mathrm{#1}}
\newcommand{\mbb}[1]{\mathbb{#1}}
\newcommand{\bs}{\backslash}
\newcommand{\labelunder}[2]{\underset{#1}{\underbrace{#2}}}
\newcommand\smalltwosqmat[4]{\left(\begin{smallmatrix} #1 & #2\\ #3 & #4 \end{smallmatrix} \right)}
\author{Kush Singhal}
\title{The Completed $L$-function attached to the Weight 2 Polar Harmonic Maass Form $H_{N,z}^*(\tau)$}
\date{}
\begin{document}
	\maketitle
	\begin{abstract}
		In this paper, we study the Mellin transform of the weight 2 level $N$ polar harmonic Maass form $H_{N,z}^*(\tau)$, and analyze this (generalized) $L$-function as $\Im{z}\to \infty$. On the way, we also calculate the Fourier expansion of $H_{N,z}^*(\tau)$ at arbitrary cusps of $\Gamma_0(N)$, and we give a functional equation and factorization into local factors of the $L$-function for the weight 2 level $N$ Eisenstein series at the cusps $i\infty$ and $0$. 
	\end{abstract}
	
	\tableofcontents
	
	\section{Introduction and Statement of Results}\label{sctn::intro}
	We consider the polar harmonic Maass form $H_{N,z}^*(\tau)$, where $N\in \N$ and $z, \tau\in \mbb H$. These forms were first constructed by Bringmann and Kane \cite{construction_Maass_form} as the analytic continuation to $s=0$ of a certain Poincar\'e series (see Subsection \ref{sctn::prelim_HNz} for details). 
	As a function of $\tau$, the only poles of $H_{N,z}^*(\tau)$ are simple poles at $\tau = \gamma z$ for any $\gamma \in \Gamma_0(N)$ with residue $$\frac{1}{4\pi i} \cdot \#\mr{Stab}_z(\Gamma_0(N)),$$
	where $\Gamma_0(N)$ denotes the usual congruence subgroup of $SL_2(\Z)$. This useful fact allows one to study the divisors of meromorphic modular forms; explicitly, it was shown by Bringmann, Kane, L\"obrich, Ono, and Rolen \cite{divisors_mod_forms} that for any weight $k$ meromorphic modular form $f$ on $\Gamma_0(N)$, the following \emph{divisor polar harmonic Maass form} can be written as \[\sum_{z\in \Gamma_0(N) \bs \mbb H} e_{N,z} \mr{ord}_z(f) H_{N,z}^*(\tau) =: f^{\mr{div}}(\tau) = \frac{k}{4\pi \Im{\tau}} - \frac{1}{2\pi i} \frac{f'(\tau)}{f(\tau)} + g(\tau)\]
	for some weight 2 cusp form $g(\tau)$ on $\Gamma_0(N)$, where $f'(\tau)$ is the usual derivative of $f$ and $e_{N,z} := 2/\#\mr{Stab}_z(\Gamma_0(N))$ (with $e_{N,\rho} :=1$ for cusps $\rho$). Due to the slow-growing nature of the coefficients of the Fourier expansion of cusp forms, it is possible to compute the divisors of meromorphic modular forms numerically. The interested reader may refer to \cite{divisors_mod_forms} for details and examples of this computation. 
	
	For an explicit expression of $H_{N,z}^*(\tau)$, we may look to its Fourier expansion at different cusps. However, the presence of the simple pole at $z$ (and at all points $\Gamma_0(N)$-equivalent to $z$) complicates issues of convergence. Due to these poles, the Fourier expansion at a cusp $\rho$ only converges if $\tau$ is ``sufficiently close'' to the cusp, in the sense that $\Im{L \tau}$ needs to be sufficiently large where $L\in SL_2(\Z)$ is chosen so that $L(i \infty) = \rho$. 
	
	The Fourier expansion of $H_{N,z}^*$ at $i\infty$ was computed in \cite{divisors_mod_forms} to be \[H^*_{N,z}(\tau) = \frac{3}{\pi [SL_2(\Z) : \Gamma_0(N)] \Im \tau} + \sum_{n=1}^\infty j_{N,n}(z) e^{2\pi i n \tau},\]
	which converges only for $\Im{\tau} > \max\{\Im z, 1/\Im z\}$. Here the $j_{N,n}(z)$ are themselves weight 0 polar harmonic Maass forms for $\Gamma_0(N)$ whose only poles occur at the cusps. An explicit expression for $j_{N,n}(z)$ as a Fourier expansion in $z$ with coefficients involving Kloosterman sums and the $I$- and $J$- Bessel functions was also given in \cite{divisors_mod_forms}, and is reproduced as Proposition \ref{prop::fourier_expansion_infty} in Subsection \ref{sctn::prelim_HNz}. The Fourier expansion at any cusp for the map $z\mapsto H_{N,z}^*(\tau)$ (where $\tau$ is fixed) was also computed in \cite{construction_Maass_form}; this map is a polar harmonic Maass form of weight 0 for $\Gamma_0(N)$. In this paper, we compute the Fourier expansion of $H_{N,z}^*(\tau)$ at an arbitrary cusp $\rho$ of $\Gamma_0(N)$ (see Proposition \ref{prop::fourier_at_arb_cusps}). The computation proceeds along the same lines as in \cite{construction_Maass_form}. In fact, we directly use many of the computations as well as results on convergence from \cite{construction_Maass_form}. 
	
	The $j$-functions appearing as coefficients in the Fourier expansion at $i\infty$ are interesting in their own right. These are weight 0 polar harmonic Maass forms for $\Gamma_0(N)$. For $N=1$, these $j$-functions are closely related to the modular $j$-function \[j(\tau) = \frac{E_4(\tau)^3}{\Delta(\tau)} = e^{-2\pi i \tau} + 744 + 196884 e^{2\pi i \tau} + \cdots.\]
	In fact, $j_{1,1}(\tau) = j(\tau)$ and the functions $j_{1, n}(\tau)$ form a Hecke system, i.e. $j_{1,n}(\tau) = j_{1,1}(\tau) |T(n)$ where $T(n)$ is the $n$'the normalized Hecke operator. Similar relations involving the Hecke operator are true for arbitrary $N\in \N$ as well, though for $N>1$ some additional complications arise. The interested reader is referred to \cite{divisors_mod_forms} for the specific case of the $j_{N,n}(\tau)$ given here, and to \cite{j_heckesystems_general} for a more general discussion of Hecke systems of $j$-functions. 
	
	Another interesting property of the $H_{N,z}^*(\tau)$ polar harmonic Maass forms is their close connection to weight 2 Eisenstein series for $\Gamma_0(N)$. It was shown in \cite[][Theorem 1.2]{divisors_mod_forms} that for any cusp $\rho$ of $\Gamma_0(N)$, we have the following limit \[H_{N, \rho}^*(\tau) := \lim_{z\to \rho} H_{N,z}^*(\tau) = - E_{2,N,\rho}^*(\tau),\]
	where $E_{2,N,\rho}^*(\tau)$ is the weight 2 harmonic Eisenstein series which has a constant term 1 at $\rho$ and vanishes at all other cusps. At $N=1$ this is the usual weight 2 non-holomorphic Eisenstein series whose Fourier expansion is
	\[-H_{1,i\infty}^*(\tau) = E_2^*(\tau) = 1 -\frac{3}{\pi \Im{\tau}} - 24\sum_{m\ge 1} \sigma(m) e^{2\pi i m\tau},\]
	where $\sigma(m)$ is the sum of divisors of $m$. More generally, the Fourier coefficients $j_{N,n}(\rho)$ of $H_{N,\rho}^*$ are closely related to sums of divisor functions (see Corollary \ref{cor::explicit_formula_for_j_N,n_rho} below). As such, $j_{N,n}(\rho)$ grows like $O(n^{3/2})$, and so the Dirichlet series  
	\[\sum_{n\ge 1} \frac{j_{N,n}(i\rho)}{n^s}\]
	attached to $H_{N, i\rho }^*(\tau)$ is well-defined for $\Re s > \frac52$. For $N=1$ (and thus $\rho = i\infty$ and $j_{N,n}(i\infty) = 24 \sigma(n)$), this Dirichlet series has a meromorphic continuation to $\C$ given by \[24 \zeta(s) \zeta(s-1),\]
	where $\zeta(s)$ is the usual Riemann zeta function. This is in-fact a classical $L$-function. Recall that $L$-functions are meromorphic functions $L^\infty(s)$ that encode useful local ($p$-adic) information (in the form of a sequence $c(p^r)$) for a global object in the form of local factors $$L_p(s) = 1 + \sum_{r=1}^\infty \frac{c(p^r)}{p^{r s}}.$$
	Such a sum can often be rewritten in the form $1/(1 - f_p(p^{-s})p^{-s})$ for some polynomial $f_p$. These local factors then give the $L$-function via the Euler product \[L^\infty(s) = \prod_{p} L_p(s),\] where the product runs over all rational primes $p$, or more generally over all finite places if we are considering general number fields. Expending out the product and defining $c(n)$ multiplicatively then yields a Dirichlet series $$L^\infty(s) = \sum_{n\ge 1} \frac{c(n)}{n^s}.$$ 
	Usually, the Dirichlet series only converges on some half-plane, and so the $L$-function $L^\infty(s)$ gives a meromorphic continuation for the Dirichlet series.
		
	By considering the infinite place as well, we can also obtain a completed $L$-function $$L(s) = L_{\infty}(s) L^\infty(s) = L_{\infty}(s) \prod_p L_p(s),$$
	which usually has a meromorphic continuation to $\C$ and satisfies a certain functional equation of the form \[L(s) = \pm L'(k-s)\] 
	for some (completed) $L$-function $L'$ and some choice of sign. These functional equations are usually proved by showing that the completed $L$-function $L$ is the (suitably regularized) Mellin transform of either a modular form or a harmonic Maass form, and then using the functional equation given by modularity. For forms of level 1, they satisfy a functional equation involving the points $z$ and $-\frac{1}{z}$, which usually yields $L' = L$. However, for higher levels, $L'$ is usually different, and the pair $L$ and $L'$ usually encode information about the form at the cusps $0$ and $i\infty$. 
	
	For instance, for $H_{1, i\infty}^*(\tau)$, the function \[L(s) = \int_{t_0}^\infty t^{s-1}\left(H_{1,i\infty}^*(it) + 1 - \frac{3}{\pi t} \right)dt + \int_0^{t_0} t^{s-1} \left(H_{1,i\infty}^*(it) - \frac{1}{t^2} + \frac{3}{\pi t} \right)dt + \frac{t_0^s}{s} + \frac{t_0^{s-2}}{s-2} - \frac{6}{\pi}\frac{t_0^{s-1}}{s-1}\]
	is independent of $t_0$, and is the completed $L$-function corresponding to the Dirichlet series \[\sum_{n\ge 1} \frac{j_{N,n}(i\infty)/24}{n^s}.\]
	Explicitly, we have \[L(s) = \frac{\Gamma(s)}{(2\pi)^s}\cdot 24\zeta(s)\zeta(s-1) = \frac{24\Gamma(s)}{(2\pi)^s} \sum_{n\ge 1} \frac{\sigma(n)}{n^s}\]
	where, in the notation above, the archimedean factor is $L_\infty(s) = \frac{24\Gamma(s)}{(2\pi)^s}$, and the remaining local factors have the Euler product \[L^\infty(s) = \sum_{n\ge 1} \frac{\sigma(n)}{n^s} = \prod_p \frac1{1 - p^{-s}} \frac1{1 - p^{1-s}}.\]
	Moreover, it can be easily shown that this $L$-function satisfies 
	\begin{equation}\label{eqn::func_eqn_N=1_L-function_Eisenstein_series}
		L(s) = -L(2-s)
	\end{equation}
	For instance see \cite[][Lemma 3.1]{bringmann_kane_N1_case}.  
	
	We generalize the above results for $H_{1,i\infty}^*$ to $N\ge 2$ in the proposition below.
	
	\begin{proposition}\label{prop::mellin_transform_at_cusps}
		Fix $N\ge 2$. Define \[\begin{split}
			L_{N, i\infty}(s) := \frac{t_0^s}{s} - \frac{6}{[SL_2(\Z):\Gamma_0(N)]\pi} \frac{t_0^{s-1}}{s-1}\;+\;& \int_{0}^{t_0} t^{s-1} \left(H_{N,i\infty }^*(it) + \frac{3}{[SL_2(\Z):\Gamma_0(N)]\pi t} \right)dt\\
			& + \int_{t_0}^{\infty} t^{s-1} \left(H_{N,i\infty }^*(it) + 1 - \frac{3}{[SL_2(\Z):\Gamma_0(N)]\pi t} \right)dt
		\end{split}\]
		and 
		\[\begin{split}
			L_{N, 0}(s) := \frac1N \frac{t_0^{s-2}}{s-2} - \frac{6}{[SL_2(\Z):\Gamma_0(N)] \pi} \frac{t_0^{s-1}}{s-1} \;+\; &\int_{0}^{t_0} t^{s-1} \left(H_{N,0 }^*(it) - \frac1{Nt^2} + \frac{3}{[SL_2(\Z):\Gamma_0(N)]\pi t} \right)dt\\
			& + \int_{t_0}^{\infty} t^{s-1} \left(H_{N,0}^*(it)  - \frac{3}{[SL_2(\Z):\Gamma_0(N)]\pi t} \right)dt.
		\end{split}\]
		Then,
		\begin{enumerate}
			\item Both these functions are well-defined and independent of $t_0$. Moreover, $L_{N, i\infty}(s)$ is holomorphic on $\C\bs \{0,1\}$ and $L_{N, 0}(s)$ holomorphic on $\C\bs \{1,2\}$.
			\item For $\rho \in \{0, i\infty\}$, the function $L_{N, \rho}(s)$ provides a meromorphic continuation to $\C$ for \[\frac{\Gamma(s)}{(2\pi)^s}\sum_{n\ge 1} \frac{j_{N,n}(\rho)}{n^s},\]
			which \textit{a priori} was only well-defined for $\Re s>\frac52$.
			\item We have the functional equation $N^{s/2}L_{N, i\infty}(s) = - N^{(2-s)/2}L_{N, 0}(2-s)$.
			\item We have the explicit expressions 
			\begin{align*}
				L_{N, i\infty}(s) &= 24(2\pi)^{-s}\Gamma(s) \zeta(s) \zeta(s-1) \cdot N^{-s} \prod_{p|N} \frac{1 - p^{s-2}}{1-p^{-2}}\\
				L_{N, 0}(s) &= 24(2\pi)^{-s}\Gamma(s) \zeta(s) \zeta(s-1) \cdot \frac1N \prod_{p|N} \frac{1 - p^{-s}}{1-p^{-2}}
			\end{align*}
			the product being taken over all primes $p$ dividing $N$.
		\end{enumerate}
	\end{proposition}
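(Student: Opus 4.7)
The plan is to treat both $L_{N,\rho}(s)$ as regularized Mellin transforms of the harmonic Maass Eisenstein series $H^*_{N,\rho}(it) = -E^*_{2,N,\rho}(it)$ along the imaginary axis, and to exploit the Fricke involution $W_N = \smalltwosqmat{0}{-1}{N}{0}$ as the bridge between the cusps $i\infty$ and $0$. Applying the weight-$2$ slash action to $E^*_{2,N,i\infty}\big|_2 W_N = E^*_{2,N,0}$ yields the key identity
\[
H^*_{N,i\infty}(it) \;=\; -\frac{1}{Nt^{2}}\, H^*_{N,0}\!\left(\tfrac{i}{Nt}\right),
\]
which converts small-$t$ information about one form into large-$t$ information about the other. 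Combining this with the Fourier expansion at $i\infty$ (which, from Proposition~\ref{prop::fourier_at_arb_cusps} with the ``universal'' non-holomorphic coefficient $\frac{3}{[SL_2(\Z):\Gamma_0(N)]\pi \Im{\tau}}$ at every cusp, gives $H^*_{N,i\infty}(iy) = -1 + \frac{3}{[SL_2(\Z):\Gamma_0(N)]\pi y} + O(e^{-2\pi y})$ and $H^*_{N,0}(iy) = \frac{3}{[SL_2(\Z):\Gamma_0(N)]\pi y} + O(e^{-2\pi y})$ as $y\to\infty$) supplies all the asymptotic input needed.

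For Part~1, the Fourier expansion at $i\infty$ directly gives exponential decay of each $t\to\infty$ integrand, and the Fricke identity translates this into the asymptotics $H^*_{N,i\infty}(it) = -\frac{3}{[SL_2(\Z):\Gamma_0(N)]\pi t}+O(t^{-2}e^{-2\pi/(Nt)})$ and $H^*_{N,0}(it) = \frac{1}{Nt^{2}} - \frac{3}{[SL_2(\Z):\Gamma_0(N)]\pi t}+O(t^{-2}e^{-2\pi/(Nt)})$ as $t\to 0^+$, which are precisely the terms subtracted inside the small-$t$ integrals. All four integrands therefore decay exponentially at their endpoints, so both integrals are entire in $s$ and the only poles of $L_{N,\rho}(s)$ come from the displayed rational expressions. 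Independence of $t_0$ is a one-line derivative check: the subtracted asymptotics on the two sides of $t_0$ cancel against the derivatives of the explicit $t_0^s/s$ and $t_0^{s-1}/(s-1)$ terms. For Part~2, I then fix $\Re{s} > 5/2$ (so the Dirichlet series converges absolutely by the bound $j_{N,n}(\rho) = O(n^{3/2})$) and let $t_0\to 0^+$ in the $t_0$-independent formula: the boundary terms vanish (using $\Re{s} > 1$), the small-$t$ integral vanishes by the exponential estimate, and the large-$t$ integral becomes $\int_0^\infty t^{s-1}\sum_n j_{N,n}(\rho) e^{-2\pi nt}\,dt = \frac{\Gamma(s)}{(2\pi)^s}\sum_n j_{N,n}(\rho)/n^s$, giving the claimed meromorphic continuation.

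For Part~3, I substitute $u = 1/(Nt)$ in the two integrals defining $L_{N,i\infty}(s)$ and apply the Fricke identity to rewrite $H^*_{N,i\infty}(it)$ in terms of $H^*_{N,0}(iu)$. With $s' := 2-s$ and $t_0' := 1/(Nt_0)$, a direct change-of-variable calculation shows that the small-$t$ integral becomes $-N^{1-s}$ times the large-$u$ integral of $L_{N,0}(s')$, and the large-$t$ integral becomes $-N^{1-s}$ times the small-$u$ integral. Using $(t_0')^{-s} = (Nt_0)^s$ and $(t_0')^{1-s} = (Nt_0)^{s-1}$, the explicit boundary polynomials on the two sides telescope to exactly cancel, leaving the clean identity $L_{N,i\infty}(s) = -N^{1-s}L_{N,0}(2-s)$, which rearranges to the stated functional equation after multiplication by $N^{s/2}$.

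For Part~4, by Part~2 it suffices to evaluate $\sum_n j_{N,n}(\rho)/n^s$ as a Dirichlet series. Inserting the explicit formula for $j_{N,n}(\rho)$ supplied by Corollary~\ref{cor::explicit_formula_for_j_N,n_rho}, I will unwind the result into an Euler product against the level-$1$ Dirichlet series $\zeta(s)\zeta(s-1) = \sum_n \sigma(n)/n^s$; the local correction at each prime $p\mid N$ produces exactly the factor $\frac{1-p^{s-2}}{1-p^{-2}}$ for $\rho = i\infty$ and $\frac{1-p^{-s}}{1-p^{-2}}$ for $\rho = 0$. The main technical input on which everything hinges is getting the Fricke asymptotic of the \emph{completed} forms correct, in particular the universality of the non-holomorphic coefficient at every cusp; once that is justified, Parts~1--3 reduce to careful bookkeeping and Part~4 becomes pure Dirichlet-series arithmetic, with the functional equation from Part~3 giving a satisfying independent consistency check via the classical functional equation of $\zeta(s)\zeta(s-1)$.
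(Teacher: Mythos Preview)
Your treatment of Parts~(1)--(3) is essentially the same as the paper's: both use the Fricke involution $H^*_{N,i\infty}|_2[W_N] = H^*_{N,0}$ to convert the small-$t$ integral into a large-$t$ integral for the other cusp, invoke the Fourier expansion at $i\infty$ to get exponential decay of the regularized integrands, differentiate in $t_0$ for independence, and let $t_0\to 0^+$ for Part~(2). The paper packages the resulting entire pieces as $I^{(1)}_\rho(s;t_0)$ and proves their properties in a separate lemma, but the content is the same. For Part~(3) the paper simply compares the two decompositions at specific values of $t_0$, which is equivalent to your change of variables.

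There is, however, a genuine circularity in your approach to Part~(4). You propose to insert the explicit formula for $j_{N,n}(\rho)$ from Corollary~\ref{cor::explicit_formula_for_j_N,n_rho} and sum the resulting Dirichlet series. But in the paper that corollary is \emph{deduced from} Proposition~\ref{prop::mellin_transform_at_cusps}(4), not the other way around: its proof begins by expanding the Euler product already established in Part~(4) and then comparing Dirichlet coefficients. So as written your argument for Part~(4) assumes what it sets out to prove. The paper's actual route is to start from the raw definition $j_{N,m}(i\infty) = 4\pi^2 m \sum_{N\mid c} K(m,0;c)/c^2$, recognize $K(m,0;c)$ as a Ramanujan sum $c_c(m)$, use the classical Dirichlet series $\sum_m c_q(m)/m^{s-1} = \zeta(s-1)\sum_{g\mid q}\mu(g)(q/g)^{2-s}$, and then do a (somewhat lengthy) multiplicative unwinding over divisors of $N$ to extract the Euler factors $\prod_{p\mid N}(1-p^{s-2})/(1-p^{-2})$. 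The formula for $L_{N,0}$ then follows from Part~(3). If you want to salvage your approach, you would need to prove Corollary~\ref{cor::explicit_formula_for_j_N,n_rho} independently (e.g.\ directly from the Ramanujan-sum expression for $j_{N,n}$), which is essentially the same computation the paper does anyway.
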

	From the perspective of $L$-functions, it is easy to see that $N^{s/2}L_{N, \rho}$ ($\rho\in \{0,i\infty\})$ is the completed $L$-function corresponding to the Dirichlet series \[\sum_{n\ge 1} \frac{j_{N,n}(\rho)/24}{n^s},\]
	where the archimedean factor is given by $L_{N, \rho; \infty}(s) = 24N^{s/2}(2\pi)^{-s} \Gamma(s)$, and the local factors at finite places are given by 
	\begin{align*}
		L_{N, i\infty; p}(s) &= \begin{cases}
			1/(1 - p^{-s})(1 - p^{1-s}) & \text{if } p\nmid N,\\
			p^{-s \;\mr{ord}_pN}(1-p^{s-2})/(1-p^{-2})(1 - p^{-s})(1 - p^{1-s}) & \text{if } p|N.
		\end{cases}\\
		L_{N, 0; p}(s) &= \begin{cases}
			1/(1 - p^{-s})(1 - p^{1-s}) & \text{if } p\nmid N,\\
			p^{-\mr{ord}_pN}/(1-p^{-2})(1 - p^{1-s}) & \text{if } p|N.
		\end{cases}
	\end{align*}
	where $\mr{ord}_pN$ is the largest $k\in \N$ such that $p^k|N$. 

	The parallels between the above proposition and the case for $N=1$ \cite[c.f.][Lemma 3.1]{bringmann_kane_N1_case} are obvious---setting $N=1$ and using the fact that $H_{1,0}^* = H_{1,i\infty}^*$ essentially yields Lemma 3.1 of \cite{bringmann_kane_N1_case}, though the regularization of the Mellin transforms in $L_{1, i\infty}$ and $L_{1,0}$ need to be fixed to account for the fact that the Fourier expansion at $0$ is the same as that at $i\infty$. The differences are also noticeable; all of these differences are due entirely to the fact that $S = \smalltwosqmat0{-1}10\in SL_2(\Z)$ does not belong to $\Gamma_0(N)$ for any $N\ge 2$. Thus, the substitution $t\mapsto \frac{-1}{t}$ in the integral does not take $H_{N,i\infty}^*(it)$ to itself, but instead to $H_{N, 0}^*(it)$. Since the cusp $0$ and the cusp $i\infty$ are distinct cusps of $\Gamma_0(N)$ for $N\ge 2$, they yield distinct Dirichlet series, distinct $L$-functions, and so on.  
	
	The explicit expressions for the local factors $L_{N, \rho; p}$ also yield expressions for $j_{N,n}(\rho)$, as given in the following corollary.
	\begin{corollary}\label{cor::explicit_formula_for_j_N,n_rho}
		For any $n\in \N$, we have \[j_{N,n}(i\infty) = \frac{24}{N^2} \left( \prod_{p|N} \frac{1}{1 - p^{-2}}\right) \mu\left(\frac{N}{g}\right) g^2 \sum_{d|\gcd(N, n/g)} \frac{\mu(d)}{d^2} \sigma \left(\frac{nd}{g} \right)\]
		where $g:=\gcd(n,N)$, and $\sigma$ is the usual sum of divisors function. Also,
		\[j_{N,n}(0) = \frac{24}{N} \left( \prod_{p|N} \frac{1}{1 - p^{-2}}\right) \cdot n_2 \sigma(n_1)\]
		where we write $n=n_1n_2$ with $\gcd(n_1, N)=1=\gcd(n_1, n_2)$ and every prime factor of $n_2$ divides $N$.
	\end{corollary}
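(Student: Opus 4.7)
The plan is to read off the Dirichlet coefficients directly from the explicit Euler products already established in Proposition \ref{prop::mellin_transform_at_cusps}(4). Dividing $N^{s/2}L_{N,\rho}(s)$ by the archimedean factor $24 N^{s/2}(2\pi)^{-s}\Gamma(s)$ identifies the finite Euler product with the generating Dirichlet series for $j_{N,n}(\rho)/24$, giving
\[\sum_{n\ge 1}\frac{j_{N,n}(i\infty)/24}{n^s} = \frac{\zeta(s)\zeta(s-1)}{N^s}\prod_{p|N}\frac{1-p^{s-2}}{1-p^{-2}},\qquad \sum_{n\ge 1}\frac{j_{N,n}(0)/24}{n^s} = \frac{\zeta(s)\zeta(s-1)}{N}\prod_{p|N}\frac{1-p^{-s}}{1-p^{-2}},\]
on the half-plane of absolute convergence. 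The corollary will then follow by extracting the coefficient of $n^{-s}$ on each right-hand side, using $\zeta(s)\zeta(s-1)=\sum_m\sigma(m)m^{-s}$ combined with a M\"obius expansion of the finite product over primes dividing $N$.

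For the cusp $\rho = 0$, I would expand $\prod_{p|N}(1-p^{-s}) = \sum_{d|\mathrm{rad}(N)}\mu(d)d^{-s}$ and convolve with the $\sigma$ series; the $n^{-s}$ coefficient picks out $d\mid \gcd(n,\mathrm{rad}(N))$. Writing $n = n_1 n_2$ as in the statement, the primes of $\gcd(n,\mathrm{rad}(N))$ are precisely those of $n_2$, so the condition reduces to $d|\mathrm{rad}(n_2)$. Since $\gcd(n_1,n_2/d)=1$, multiplicativity of $\sigma$ lets the sum factor as $\sigma(n_1)\sum_{d|\mathrm{rad}(n_2)}\mu(d)\sigma(n_2/d)$. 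The inner sum is multiplicative in $n_2$, and on a prime power $p^a$ collapses to $\sigma(p^a)-\sigma(p^{a-1})=p^a$; hence the total equals $n_2$, producing the stated formula. This half is essentially routine.

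For the cusp $\rho = i\infty$, the analogous expansion $\prod_{p|N}(1-p^{s-2}) = \sum_{d|\mathrm{rad}(N)}\mu(d)d^{s-2}$ combined with the $N^{-s}$ prefactor yields a Dirichlet polynomial supported at $m=N/d$ with coefficient $\mu(d)/d^2$, for squarefree $d|N$. Convolution with $\sum_m\sigma(m)m^{-s}$ then gives
\[\frac{j_{N,n}(i\infty)}{24} = \prod_{p|N}\frac{1}{1-p^{-2}}\sum_{\substack{d|\mathrm{rad}(N)\\ N/d|n}}\frac{\mu(d)}{d^2}\sigma\!\left(\frac{nd}{N}\right).\]
The divisibility constraint $N/d|n$ is equivalent to $N/g|d$, which combined with $d$ squarefree forces the sum to vanish unless $N/g$ is itself squarefree; this produces the leading factor $\mu(N/g)$. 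Substituting $d = (N/g)d'$ with $d'$ squarefree and coprime to $N/g$, the M\"obius function splits as $\mu(N/g)\mu(d')$ and $d^{-2}$ contributes the factor $(g/N)^2$, which accounts for the prefactor $g^2/N^2$ in the claimed expression. The main obstacle is the final piece of bookkeeping: identifying the precise range of $d'$ and re-indexing the remaining sum to land on the summation over $d|\gcd(N,n/g)$ stated in the corollary; this indexing step is where essentially all the substantive work of the proof resides.
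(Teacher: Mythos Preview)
Your approach mirrors the paper's: both read off coefficients from the Euler products in Proposition~\ref{prop::mellin_transform_at_cusps}(4), and your argument for $j_{N,n}(0)$ is complete and correct (the paper does the same thing, organized via local factors rather than a global M\"obius convolution). For $j_{N,n}(i\infty)$, your intermediate formula
\[
\frac{j_{N,n}(i\infty)}{24}=\prod_{p\mid N}\frac{1}{1-p^{-2}}\sum_{\substack{d\mid\mathrm{rad}(N)\\ N/d\,\mid\, n}}\frac{\mu(d)}{d^{2}}\,\sigma\!\Bigl(\frac{nd}{N}\Bigr)
\]
is also correct, but the final ``bookkeeping'' step you flag cannot be completed as written---not through any defect in your argument, but because the stated corollary is itself in error. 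After substituting $d=(N/g)d'$, your $d'$ ranges over squarefree products of the primes $p\mid N$ with $v_p(n)\ge v_p(N)$, whereas the primes of $\gcd(N,n/g)$ are exactly those with $v_p(n)>v_p(N)$; these sets differ whenever $v_p(n)=v_p(N)$ for some $p\mid N$. Concretely, for $N=n=p$ prime your sum has two terms ($d'\in\{1,p\}$) and yields $j_{p,p}(i\infty)=24(p^{2}-p-1)/(p^{2}-1)$, which agrees with direct extraction from the Dirichlet series, while the corollary's single term gives the incorrect value $24p^{2}/(p^{2}-1)$.

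The paper's own proof makes the same slip, at the step rewriting the local coefficient $\sigma(p^{j})-p^{-2}\sigma(p^{j+1})$ as $\sum_{g\mid m}\mu(g)g^{-2}\sigma(mg)$ with $m=p^{j}$: this identity fails at $j=0$ (the correct inner range is $g\mid p$, independent of $j$), and the error propagates to the final summation range. Your derivation is the right one; to make the corollary true, the condition $d\mid\gcd(N,n/g)$ should be replaced by the generally larger condition that $d$ be squarefree with $d\mid\mathrm{rad}(N)$ and $\gcd(d,N/g)=1$.
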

	As an aside, it is interesting to note that there is a constant factor of \[\frac{24}{N^2} \prod_{p|N} \frac{1}{1-p^{-2}} = \frac{24}{[SL_2(\Z):\Gamma_1(N)]}\]
	in the above expressions for $j_{N,n}(\rho)$, where $\Gamma_1(N)$ is the congruence subgroup \[\Gamma_1(N) = \left\{\smalltwosqmat{a}{b}{c}{d} \in SL_2(\Z): N|c, a,d\equiv 1\modc{N}\right\}.\]
	
	Next, we consider the Dirichlet series obtained from the Fourier expansion of $H_{N,z}^*(\tau)$, i.e. the formal series \[\sum_{n\ge 1} \frac{j_{N,n}(z)}{n^s} .\]
	However $j_{N,n}(z)$ grows like $e^{2\pi n \Im z}$ \cite[Theorem 1.1(4)]{divisors_mod_forms}, and so the above series does not converge for any $s\in \C$. Thus, it is meaningless to talk about the Dirichlet series of $H_{N,z}^*(\tau)$. On the other hand, we can still consider the Mellin transform of $H_{N,z}^*(\tau)$, as long as we suitably regularize it. This is the content of the next proposition.
	
	\begin{proposition}\label{prop::mellin_transform_z_converges_infty_only}
		Write $z = x+iy$, and suppose $z$ is not in the set \[\mc S_N := \{\mf z \in \mbb H : \exists \gamma \in \Gamma_0(N), \gamma \mf z \in i\R^+\} = \Gamma_0(N) \cdot (i\R^+).\] Suppose also that $y>2$. For any $N\ge 2$ and any $t_0>0$, the following integral exists, is independent of $t_0$, and is a meromorphic function of $s$ on $\C$ whose only pole is a simple pole at $s=1$:
		\[\begin{split}
			L_{N,z}(s) := -\frac{6}{[SL_2(\Z) : \Gamma_0(N)] \pi} \frac{t_0^{s-1}}{s-1} \;+ \;& \int_0^{t_0} t^{s-1} \left(H_{N,z}^*(it)  + \frac{3}{[SL_2(\Z) : \Gamma_0(N)] \pi t}\right)dt \\
			&+ \;\int_{t_0}^\infty t^{s-1}\left(H_{N,z}^*(it)  - \frac{3}{[SL_2(\Z) : \Gamma_0(N)] \pi t}\right)dt.
		\end{split}\]
	\end{proposition}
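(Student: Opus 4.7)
The plan is to establish, in order: (a) convergence of the two improper integrals locally uniformly in $s\in\C$; (b) independence of the cutoff $t_0$; and (c) meromorphic continuation with a single simple pole at $s=1$.

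For (a), the integral over $[t_0,\infty)$ is handled by Proposition~\ref{prop::fourier_expansion_infty}: since $y>2>1/y$, the Fourier expansion at $i\infty$ yields
\[H_{N,z}^*(it) - \frac{3}{[SL_2(\Z):\Gamma_0(N)]\pi t} = \sum_{n\ge 1} j_{N,n}(z)\,e^{-2\pi nt}\]
for every $t>y$, with exponential decay in $t$. The assumption $z\notin \mc{S}_N$ guarantees that $H_{N,z}^*$ has no poles on $i\R^+$, so it is continuous there and on any residual compact subinterval the integrand is bounded. The integral over $(0,t_0]$ is handled dually, via the Fourier expansion at the cusp $0$. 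Let $\sigma := \smalltwosqmat{0}{-1/\sqrt N}{\sqrt N}{0}$, so that $\sigma(i\infty)=0$ and $\sigma$ scales out the cusp-$0$ width $N$. The weight-$2$ slash identity gives
\[H_{N,z}^*(it) = -\frac{1}{Nt^2}\,(H_{N,z}^*|_2\sigma)(i/(Nt)),\]
and Proposition~\ref{prop::fourier_at_arb_cusps} produces
\[(H_{N,z}^*|_2\sigma)(\tau) = \frac{3}{[SL_2(\Z):\Gamma_0(N)]\pi\,\Im{\tau}} + \sum_{n\ge 1}b_n(z)\,e^{2\pi in\tau},\]
with vanishing constant term (because $z\in\mbb{H}$ is not a cusp, so the Poincar\'e series defining $H_{N,z}^*$ contributes no constant term at any cusp) and with coefficient $\tfrac{3}{[SL_2(\Z):\Gamma_0(N)]\pi}$ for the $1/\Im{\tau}$ piece, the same as at $i\infty$. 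Substituting yields, for all sufficiently small $t$,
\[H_{N,z}^*(it) + \frac{3}{[SL_2(\Z):\Gamma_0(N)]\pi t} = -\frac{1}{Nt^2}\sum_{n\ge 1}b_n(z)\,e^{-2\pi n/(Nt)},\]
which decays super-exponentially as $t\to 0^+$. Both integrands therefore decay rapidly at the relevant endpoint, and both integrals define entire functions of $s$.

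For (b), write $c:=\tfrac{6}{[SL_2(\Z):\Gamma_0(N)]\pi}$. A direct comparison for $t_0<t_0'$ shows
\[L_{N,z}(s;t_0') - L_{N,z}(s;t_0) \;=\; c\int_{t_0}^{t_0'}t^{s-2}\,dt \;-\; c\cdot\frac{(t_0')^{s-1}-t_0^{s-1}}{s-1} \;=\; 0,\]
valid for $s\neq 1$ by the fundamental theorem of calculus and extended to $s=1$ by continuity. Hence $L_{N,z}(s)$ is independent of $t_0$. For (c), since both integrals are entire in $s$, the only non-entire contribution to $L_{N,z}(s)$ is the explicit term $-c\,t_0^{s-1}/(s-1)$, which has a simple pole at $s=1$ with residue $-c$ and no other singularity.

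The main obstacle is verifying that the coefficient of $1/\Im{\tau}$ in the Fourier expansion at the cusp $0$ equals that at $i\infty$: this is exactly what makes the single regularization constant $c$ cancel the polar $1/t$ behavior at both endpoints simultaneously. Morally this reflects the same invariance that makes $E_2^*$ genuinely modular despite the quasi-modularity of $E_2$, and it should drop out of Proposition~\ref{prop::fourier_at_arb_cusps} once the width and slash conventions at cusp $0$ are tracked carefully.
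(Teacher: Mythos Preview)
Your argument is correct and takes a more direct route than the paper. Both approaches control the two endpoints via the Fourier expansions of $H_{N,z}^*$ at the cusps $i\infty$ and $0$; the difference lies in how the intermediate range of $t$ is handled. The paper decomposes the Fourier expansion term by term into pieces $I^{(1)}_\rho,\dots,I^{(4)}$ (equation~(\ref{eqn::decompose_L_N,z})) and proves each piece is separately entire via a detailed analysis of the $I$-Bessel contribution (Lemmas~\ref{lem::I2_convergence}--\ref{lem::I-bessel_piece_X2_n_le_m_int_from_1/y to a}); this requires extending the region of convergence of that piece down to $\Im\tau > 1/y$, and the hypothesis $y>2$ enters precisely there. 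You instead invoke the full Fourier expansions only on $t>y$ and $t<1/y$, where Propositions~\ref{prop::fourier_expansion_infty} and~\ref{prop::fourier_at_arb_cusps} already guarantee convergence, and cover the compact residual interval by continuity (using $z\notin\mc S_N$ to rule out poles on $i\R^+$). Your argument therefore never actually uses $y>2$, only $y>1$. What the paper's heavier machinery buys is the termwise decomposition~(\ref{eqn::decompose_L_N,z}) itself, which is essential later for Theorem~\ref{thm::mellin_transform_converge_limit} where one must track how each $I^{(k)}$ behaves as $y\to\infty$.

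Two small cleanups. First, your scaling matrix $\sigma$ is not in $SL_2(\Z)$, so Proposition~\ref{prop::fourier_at_arb_cusps} does not literally apply to it; either work with $L=\smalltwosqmat{0}{-1}{1}{0}$ and its expansion in $e^{2\pi im\tau/N}$, or note that $(H_{N,z}^*|_2\sigma)(\tau)=N\cdot (H_{N,z}^*|_2 L)(N\tau)$ and read off your expansion from that. Second, the ``main obstacle'' you flag is none: the coefficient $3/\bigl([SL_2(\Z):\Gamma_0(N)]\pi\bigr)$ of $1/\Im\tau$ appears explicitly in the statement of Proposition~\ref{prop::fourier_at_arb_cusps} and is visibly independent of the cusp, so no further verification is needed.
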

	Here, it is necessary that $z\notin \Gamma_0(N) (i\R^+)$ as otherwise there would be a simple pole on the line of integration. We also require $y>2$ in order to ensure convergence of the Fourier expansion of $H_{N,z}^*(\tau)$ at the cusps. However, from the Poincar\'e series definition of $H_{N,z}^*(\tau)$, it is not apparent that this restriction on $y$ is really necessary, and it would be interesting to check whether the above integral does indeed converge for all $y$.

	It should be noted that the Mellin transform (after removing suitable powers of $2\pi$ and $\Gamma$ factors) also does not seem to have an obvious Euler product, nor is there any reason for there to be an Euler product expansion. As there is neither a series representation nor an Euler product, it is not an $L$-function in the classical sense. However, it is still a meromorphic function on $\C$ satisfying a functional equation.
	
	\begin{proposition}\label{prop::mellin_transform_z_converges}
		Write $z = x+iy$, with the same conditions on $z$ as in Proposition \ref{prop::mellin_transform_z_converges_infty_only}. 
		\begin{enumerate}
			\item For any $N\ge 2$ and $y>2$, the following integral exists, is independent of $t_0$, and is a meromorphic function of $s$ on $\C$ whose only pole is a simple pole at $s=1$:
			\[\begin{split}
				L_{N,-1/Nz}(s) = -\frac{6}{[SL_2(\Z) : \Gamma_0(N)]\pi }\frac{t_0^{s-1}}{s-1} \;+&\;\int_0^{t_0} t^{s-1} \left(H_{N,-1/Nz}^*(it)  + \frac{3}{[SL_2(\Z) : \Gamma_0(N)] \pi t}\right)dt \\
				&+ \;\int_{t_0}^\infty t^{s-1}\left(H_{N,-1/Nz}^*(it)  - \frac{3}{[SL_2(\Z) : \Gamma_0(N)] \pi t}\right)dt.
			\end{split}\]
			\item We have the functional equation \[N^{s/2} L_{N,z}(s) = - N^{(2-s)/2} L_{N,-1/Nz}(2-s).\]
		\end{enumerate}
	\end{proposition}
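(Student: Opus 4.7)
The plan is to reduce both parts of the proposition to Proposition~\ref{prop::mellin_transform_z_converges_infty_only} applied at $z$ itself, by exploiting a Fricke-type transformation for the family $H_{N,z}^*$. The first step is to establish the pointwise identity
$$H_{N,z}^*\!\left(-\frac{1}{N\tau}\right) = N\tau^2 \, H_{N,-1/(Nz)}^*(\tau),$$
which is the polar-harmonic analogue of the Fricke swap $E_{2,N,i\infty}^*|_{2, W_N} = E_{2,N,0}^*$, consistent with the limits $H_{N,\rho}^* = -E_{2,N,\rho}^*$ recalled in Section~\ref{sctn::intro}. I expect this identity to follow directly from the Poincar\'e-series construction of $H_{N,z}^*$ in Subsection~\ref{sctn::prelim_HNz}: the Fricke matrix $W_N = \smalltwosqmat{0}{-1}{N}{0}$ normalizes $\Gamma_0(N)$ and sends $z$ to $-1/(Nz)$, so substituting $W_N\tau$ for $\tau$ rearranges the defining sum termwise into the Poincar\'e series attached to $-1/(Nz)$, up to the weight-$2$ automorphy factor $N\tau^2$. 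Specializing to $\tau = it$ yields
$$H_{N,-1/(Nz)}^*(it) = -\frac{1}{Nt^2}\, H_{N,z}^*\!\left(\frac{i}{Nt}\right).$$

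Given this, I would substitute $u = 1/(Nt)$ in both integrals defining $L_{N,-1/(Nz)}(s)$, and specialize the free parameter to $t_0 = 1/\sqrt{N}$, the unique fixed point of the involution $t \leftrightarrow 1/(Nt)$, so that the split point is preserved. Setting $M := [SL_2(\Z):\Gamma_0(N)]$, a direct computation using $dt = -du/(Nu^2)$ and $t^{s-1} = (Nu)^{1-s}$ gives
$$\int_0^{t_0}\! t^{s-1}\!\left(H_{N,-1/(Nz)}^*(it) + \tfrac{3}{M\pi t}\right)dt = -N^{1-s}\!\int_{t_0}^\infty\! u^{1-s}\!\left(H_{N,z}^*(iu) - \tfrac{3}{M\pi u}\right)du,$$
and analogously for $\int_{t_0}^\infty$, with the roles of the two integrals swapped. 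Notice that the $\pm$ pattern of the counterterms flips under the substitution, lining up exactly with the opposite signs in $L_{N,z}(2-s)$. Part~(1) then follows immediately: the right-hand sides are the integrals from Proposition~\ref{prop::mellin_transform_z_converges_infty_only} applied to $z$ (which satisfies its hypotheses), and hence converge and define a meromorphic function on $\C$ with only a simple pole at $2-s = 1$, i.e.\ at $s=1$.

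Summing the two pieces and keeping track of the regularizations, the counterterm $-6t_0^{s-1}/(M\pi(s-1))$ in $L_{N,-1/(Nz)}(s)$ combines with the boundary contributions $6N^{1-s}t_0^{1-s}/(M\pi(s-1))$ produced by the substitution of the $\pm 3/(M\pi u)$ pieces; the choice $t_0 = 1/\sqrt{N}$ forces $t_0^{s-1} = N^{1-s}t_0^{1-s}$, so all such terms cancel exactly. What remains is the clean identity $L_{N,-1/(Nz)}(s) = -N^{1-s}\, L_{N,z}(2-s)$, which upon rearrangement is precisely the functional equation $N^{s/2}L_{N,z}(s) = -N^{(2-s)/2}L_{N,-1/(Nz)}(2-s)$ of part~(2). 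The main obstacle I anticipate is proving the Fricke identity rigorously: one has to verify the termwise rearrangement of the Poincar\'e series in a range of the spectral parameter where the sum converges absolutely, and then propagate the identity through the analytic continuation at which $H_{N,z}^*$ is defined. Once the Fricke relation is secured, the remainder is a careful bookkeeping exercise in the regularized Mellin transforms.
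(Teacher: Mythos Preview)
Your proposal is correct and follows essentially the same route as the paper: establish the Fricke identity $H_{N,W_Nz}^* = H_{N,z}^*|_2[W_N]$ (this is Lemma~\ref{lem::func_eqn_H_N,z_and_H_N,WNz}, proved exactly as you anticipate, by manipulating the Poincar\'e series $P_{N,s}$ in its region of absolute convergence and then passing to the analytic continuation at $s=0$), perform the substitution $t\mapsto 1/(Nt)$, and compare at the self-dual split point $t_0 = N^{-1/2}$.

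The one organizational difference is that the paper proves Part~(1) not by reducing to Proposition~\ref{prop::mellin_transform_z_converges_infty_only}, but in parallel with it: after the Fricke substitution it writes down an explicit decomposition of $L_{N,-1/Nz}(s)$ into the same building blocks $I_\rho^{(k)}$ and $I^{(4)}$ that appear in the decomposition of $L_{N,z}(s)$ (equations~(\ref{eqn::decompose_L_N,z}) and~(\ref{eqn::decompose_L_N,-1/Nz})), and then the lemmas showing each $I^{(k)}$ is entire yield both propositions simultaneously. Your approach of deducing Part~(1) directly from the identity $L_{N,-1/Nz}(s) = -N^{1-s}L_{N,z}(2-s)$ together with Proposition~\ref{prop::mellin_transform_z_converges_infty_only} is a legitimate shortcut that avoids writing out the second decomposition; the paper's version has the advantage of making the Fourier-expansion structure of $L_{N,-1/Nz}$ explicit, which is convenient when one later wants to analyze limits such as Corollary~\ref{cor::mellin_transform_at_0_converge_limit}.
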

	
	Bringmann and Kane \cite{bringmann_kane_N1_case} studied the $N=1$ case of Propositions \ref{prop::mellin_transform_z_converges_infty_only} and \ref{prop::mellin_transform_z_converges}, obtaining the same result as above \cite[Theorem 1.1]{bringmann_kane_N1_case}; as before since $S = \smalltwosqmat0{-1}10\in SL_2(\Z)$ we have $H_{1,-1/z}^* = H_{1,z}^*$ and so $L_{1,-1/z} = L_{1,z}$. However, they proved Proposition \ref{prop::mellin_transform_z_converges_infty_only} and \ref{prop::mellin_transform_z_converges} for $N=1$ without the constraint $y>2$. 
	
	Finally, since $H_{N,z}^* \to H_{N,\rho}^*$ as $z\to \rho$ for any cusp $\rho$ of $\Gamma_0(N)$, one would hope that $L_{N,z} \to L_{N, i\infty}$ as $z\to i\infty$ and $L_{N, z}\to L_{N, 0}$ as $z\to 0$ (or, $L_{N, -1/Nz}\to L_{N,0}$ as $z\to i\infty$). However, this is not the case. Due to a certain geometric series appearing in the Fourier expansion of $L_{N,z}(s)$ at $i\infty$, in order to make this limit converge it is necessary to subtract some terms. For $N=1$, Bringmann and Kane \cite[Theorem 4.3]{bringmann_kane_N1_case} showed that 
	\begin{equation}\label{eqn::limit_of_mellin_N=1}
		\begin{split}
			\lim_{y\to \infty} \Biggl( L_{1,x+iy}(s) &+ \frac{y^s}{s} - \sum_{j=1}^{\lfloor \Re{s} \rfloor} \frac{(s-1)_{j-1} y^{s-j}}{(2\pi)^j} \left(\mr{Li}_j\big(e^{-2\pi i x}\big) + (-1)^j \mr{Li}_j\big(e^{2\pi i x} \big)\right) \\
			& - \frac{y^{2-s}}{2-s} + \sum_{j=1}^{\lfloor 2-\Re{s} \rfloor } \frac{(1-s)_{j-1} y^{2-s-j}}{(2\pi)^j} \left(\mr{Li}_j\big(e^{-2\pi i x}\big) + (-1)^j \mr{Li}_j\big(e^{2\pi i x} \big)\right)\Biggl) \\
			&= L_{1, i\infty}(s) = \frac{24 \Gamma(s) \zeta(s) \zeta(s-1)}{(2\pi)^s},
		\end{split}
	\end{equation}
	where,
	\begin{itemize}
		\item $\lfloor a\rfloor$ the greatest integer less than or equal to $a\in \R$;
		\item $a_j = a(a-1) \cdots (a-j+1)$ is the falling factorial;
		\item the empty sum is defined to be zero; and
		\item $\mr{Li}_p(z)$ is the polylogarithm, defined initially for arbitrary $p\in \C$ and any $|z|<1$ by \[\mr{Li}_p(z) = \sum_{n\ge 1} \frac{z^n}{n^p},\]
		which is then analytically continued to $|z|\ge 1$ ($z\notin \R^+$) by the integral representation \cite[][{}(1.2)]{wood_polylogs} \[\mr{Li}_p(z) = \frac{1}{\Gamma(p)} \int_0^\infty \frac{t^{p-1}}{e^t/z - 1} dt.\]
	\end{itemize}
	In their Theorem 4.3, their `$L_z(s)$' is in fact equal to $-\frac{1}{2\pi i} L_{1, z}(s)$ in our notation.
	
	In this paper, we generalize the limit in (\ref{eqn::limit_of_mellin_N=1}) to arbitrary $N\ge 2$. Notice the difference between the result for $N=1$ and for $N\ge 2$.
	
	\begin{theorem}\label{thm::mellin_transform_converge_limit}
		Write $z = x+iy\notin \mc S_N$. For all $s\in \C$ and $N\ge 2$,
		\[\lim_{y\to \infty} \Biggl(L_{N,z}(s) + \frac{y^s}{s} - \sum_{j=1}^{\lfloor \Re{s} \rfloor} \frac{(s-1)_{j-1} y^{s-j}}{(2\pi)^j} \Big(\mr{Li}_j\big(e^{-2\pi i x}\big) + (-1)^j\mr{Li}_j\big( e^{2\pi i x}\big) \Big) \Biggl) = L_{N, i\infty}(s),\]
		where the empty sum is zero.
	\end{theorem}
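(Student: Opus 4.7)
The plan is to follow the approach of Bringmann and Kane for the $N = 1$ case \cite[Theorem 4.3]{bringmann_kane_N1_case}, with the crucial adaptation needed for $N \ge 2$ coming from the failure of $S = \smalltwosqmat{0}{-1}{1}{0}$ to normalize $\Gamma_0(N)$. Recall that the $N = 1$ theorem, equation \eqref{eqn::limit_of_mellin_N=1}, contains both an ``$s$-side'' (with $y^s/s$ and polylogs in $y^{s-j}$) and a ``$(2-s)$-side'' (with $y^{2-s}/(2-s)$ and polylogs in $y^{2-s-j}$). The statement of Theorem \ref{thm::mellin_transform_converge_limit} shows that only the $s$-side survives for $N \ge 2$, and the first objective is to isolate the mechanism that kills the $(2-s)$-side.

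Concretely, I would choose $t_0 = y$ in the definition of $L_{N,z}(s)$ (using independence of $t_0$ from Proposition \ref{prop::mellin_transform_z_converges_infty_only}) and do the same in $L_{N,i\infty}(s)$, thereby reducing the problem to showing that
\[
-\frac{y^s}{s} + \int_0^y t^{s-1}\bigl(H_{N,z}^*(it) - H_{N,i\infty}^*(it)\bigr)\,dt + \int_y^\infty t^{s-1}\bigl(H_{N,z}^*(it) - H_{N,i\infty}^*(it) - 1\bigr)\,dt
\]
equals $-\frac{y^s}{s}$ plus the polylog sum, up to $o(1)$ as $y \to \infty$. Since $y > 2 > 1$ makes $\max(y,1/y) = y$, the Fourier expansion at $i\infty$ converges on $[y,\infty)$; substituting it into the second integral rewrites it as $\sum_{n \ge 1}(j_{N,n}(z) - j_{N,n}(i\infty))(2\pi n)^{-s}\Gamma(s,2\pi n y)$. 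Then I would apply the exact truncation
\[
\Gamma(s,x) = e^{-x}\sum_{j=1}^{k}(s-1)_{j-1}\, x^{s-j} \;+\; (s-1)_k\,\Gamma(s-k,x)
\]
with $k = \lfloor \Re s \rfloor$, and use Proposition \ref{prop::fourier_expansion_infty} to split $j_{N,n}(z)$ into its principal part at $i\infty$ (which contains $e^{-2\pi i n z}$ with coefficient $1$, together with whatever non-holomorphic principal contribution comes from the polar harmonic Maass structure) and a regular part approaching $j_{N,n}(i\infty)$ as $y \to \infty$. The crucial cancellation $e^{2\pi n y}e^{-2\pi n y} = 1$ turns the leading terms into exactly the combination $\sum_j \frac{(s-1)_{j-1} y^{s-j}}{(2\pi)^j}\bigl(\mr{Li}_j(e^{-2\pi i x}) + (-1)^j \mr{Li}_j(e^{2\pi i x})\bigr)$ advertised in the theorem, while the $\Gamma(s-k,\cdot)$ remainder is $O(y^{\Re s - k - 1})\cdot \zeta(k+1) = o(1)$ for $k = \lfloor \Re s \rfloor$, and the ``regular'' contribution vanishes by dominated convergence against the growth estimate $j_{N,n}(z) = O(e^{2\pi n y})$ from \cite[Theorem 1.1(4)]{divisors_mod_forms}.

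For the integral $I_1(y) = \int_0^y t^{s-1}(H_{N,z}^*(it) - H_{N,i\infty}^*(it))\,dt$, I would apply the Atkin-Lehner involution $w_N\tau = -1/(N\tau)$. Using the transformation rule $H_{N,z}^*(it) = -\frac{1}{Nt^2}H_{N,-1/(Nz)}^*(i/(Nt))$ and similarly for $H_{N,i\infty}^*$ (which replaces $-1/(Nz)$ with $0$), the substitution $u = 1/(Nt)$ converts $I_1$ into
\[
-N^{1-s}\int_{1/(Ny)}^{\infty} u^{1-s}\bigl(H_{N,-1/(Nz)}^*(iu) - H_{N,0}^*(iu)\bigr)\,du.
\]
Since $\Im(-1/(Nz)) \approx 1/(Ny)$ for large $y$, the Fourier expansion at $i\infty$ of the transformed integrand converges on $[Ny,\infty)$, and on this range the principal-part analysis parallels Step~2: the analog of $e^{-2\pi i n z}$ for the base point $-1/(Nz)$ evaluated at $\tau = iu$ produces an additional factor $e^{-2\pi n(N-1)y}$ (one factor $e^{2\pi n y}$ coming from the principal part, cancelled by $e^{-2\pi n N y}$ from $\Gamma(2-s, 2\pi n N y)$). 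For $N \ge 2$ this factor decays exponentially uniformly in $n$, so the entire $(2-s)$-side contribution vanishes in the limit -- precisely the discrepancy with the $N = 1$ case. The ``middle-region'' contribution $\int_{1/(Ny)}^{Ny}$ is controlled by the pointwise convergence $H_{N,-1/(Nz)}^* \to H_{N,0}^*$ as $y \to \infty$ (a consequence of $H_{N,z}^* \to H_{N,\rho}^*$ established in \cite{divisors_mod_forms}), combined with the fact that both polar harmonic Maass forms are smooth on $i\R^+$ by the hypothesis $z \notin \mc{S}_N$, and uniform bounds on $\Gamma_0(N)$-compact subsets of $\mbb H$.

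The main obstacle I anticipate is Step~5, specifically the uniform bounds needed in the middle region $u \in [1/(Ny), Ny]$ of the Atkin-Lehner-transformed integral, where neither Fourier expansion converges: a crude Lipschitz estimate $|H_{N,-1/(Nz)}^*(iu) - H_{N,0}^*(iu)| = O(1/y)$ is easy but yields $O(y^{1-\Re s})$, which is only small for $\Re s > 1$. For general $s \in \C$ the decay must be strengthened, presumably by exploiting the Poincar\'e-series definition of $H_{N,z}^*$ from Subsection \ref{sctn::prelim_HNz} to extract an exponential rate; this is the technical core of the argument. Once this is in hand, assembling Steps~1--6 and invoking Proposition \ref{prop::mellin_transform_at_cusps}(4) to identify $L_{N,i\infty}(s)$ yields the claimed limit.
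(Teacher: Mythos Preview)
Your approach has a genuine gap that you yourself flag in Step~5: the middle region $u \in [1/(Ny), Ny]$ where neither Fourier expansion converges. The Lipschitz bound you mention is indeed insufficient for general $s$, and the vague hope of extracting an exponential rate from the Poincar\'e series is not how the paper proceeds. More importantly, the paper's route avoids this middle region entirely, so the obstacle you identify is an artifact of your choice of decomposition rather than an intrinsic difficulty.

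The paper takes $t_0 = 1/y$ (not $t_0 = y$) in the decomposition \eqref{eqn::decompose_L_N,z} of $L_{N,z}(s)$, and likewise in \eqref{eqn::H_infty_L_function} for $L_{N,i\infty}(s)$. The point is that the Fourier expansion at $i\infty$, once the geometric-series piece $\Sigma_1$ is separated out as the closed form $(e^{2\pi(t+iz)}-1)^{-1} - (e^{2\pi(t+i\bar z)}-1)^{-1}$, actually converges for all $t > 1/y$, not just $t > y$. This is the content of Lemma~\ref{lem::I2_convergence} (and Lemma~\ref{lem::J-Bessel_converge_0}): the Bessel-function pieces $I^{(2)}_\rho$ and $I^{(3)}_\rho$ are entire in $s$ and tend to zero as $y\to\infty$ even when the lower limit of integration is $1/y$. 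The technical work is contained in Lemmas~\ref{lem::I-bessel_piece_X1}--\ref{lem::I-bessel_piece_X2_n_le_m_int_from_1/y to a}, which split the $(m,n,c)$-sum according to whether the $I_1$-Bessel argument is small or large and whether $m \gtrless \ell_\rho n$, and bound each region against polylogarithms that decay exponentially in $y$. With those pieces gone, one is left with
\[
L_{N,z}(s) = L_{N,i\infty}(s) - \frac{y^{-s}}{s} + I^{(4)}\!\left(s,z;\tfrac{1}{y}\right) + o_s(1),
\]
where $I^{(4)}$ is the integral of the explicit geometric-series term over $[1/y,\infty)$. The remaining analysis of $I^{(4)}$ (equations \eqref{eqn::exponential_int_1/y_to_y} and \eqref{eqn::exponential_int_y_to_infty}) is close in spirit to your Step~2: incomplete-Gamma asymptotics on $[y,\infty)$, and on $[1/y,y]$ a computation with Kummer's function $M(1,s+1,-2\pi n y)$ and its asymptotic expansion. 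No Atkin--Lehner step and no comparison of $H^*_{N,-1/(Nz)}$ with $H^*_{N,0}$ is needed at all; the $(2-s)$-side never appears because the $I^{(k)}_0(2-s,z;\cdot)$ terms in \eqref{eqn::decompose_L_N,z} are among the Bessel pieces already shown to vanish.
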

	The reason the polylogarithm terms are well-defined is that $z\notin \mc S_N$, and so in particular $x\notin \Z$. Proposition \ref{prop::mellin_transform_z_converges}(2) and \ref{prop::mellin_transform_at_cusps}(3) also immediately yield the following corollary.
	\begin{corollary}\label{cor::mellin_transform_at_0_converge_limit}
		Write $z = x+iy\notin \mc S_N$. For all $s\in \C$ and $N\ge 2$,
		\[\lim_{y\to \infty} \Biggl(L_{N,-1/Nz}(s) - \frac{N^{1-s} y^{2-s}}{2-s} + N^{1-s}\sum_{j=1}^{\lfloor 2-\Re{s} \rfloor} \frac{(1-s)_{j-1} y^{2-s-j}}{(2\pi)^j} \Big(\mr{Li}_j\big(e^{-2\pi i x}\big) + (-1)^j\mr{Li}_j\big( e^{2\pi i x}\big) \Big) \Biggl) = L_{N, 0}(s).\]
	\end{corollary}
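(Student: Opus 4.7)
The plan is to deduce this corollary directly and formally from Theorem \ref{thm::mellin_transform_converge_limit}, combined with the two functional equations stated earlier, namely Proposition \ref{prop::mellin_transform_z_converges}(2) (which relates $L_{N,z}$ and $L_{N,-1/Nz}$) and Proposition \ref{prop::mellin_transform_at_cusps}(3) (which relates $L_{N,i\infty}$ and $L_{N,0}$). Since the target identity concerns $L_{N,-1/Nz}(s)$ and $L_{N,0}(s)$, whereas the theorem gives us $L_{N,z}(s)$ and $L_{N,i\infty}(s)$, the natural move is to apply the theorem at the point $2-s$ and then use both functional equations to convert the result back to a statement about $s$.

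First, I would replace $s$ by $2-s$ in Theorem \ref{thm::mellin_transform_converge_limit}. Using $\Re(2-s) = 2 - \Re(s)$ and the falling-factorial identity $((2-s)-1)_{j-1} = (1-s)_{j-1}$, the theorem rewrites as
\[\lim_{y\to\infty}\!\left(L_{N,z}(2-s) + \frac{y^{2-s}}{2-s} - \sum_{j=1}^{\lfloor 2-\Re s\rfloor} \frac{(1-s)_{j-1} y^{2-s-j}}{(2\pi)^j}\Big(\mr{Li}_j\big(e^{-2\pi ix}\big) + (-1)^j \mr{Li}_j\big(e^{2\pi i x}\big)\Big)\right) = L_{N,i\infty}(2-s).\]

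Second, I would multiply both sides by $-N^{1-s}$. Rearranging Proposition \ref{prop::mellin_transform_z_converges}(2) (applied at $2-s$ in place of $s$) gives $-N^{1-s} L_{N,z}(2-s) = L_{N,-1/Nz}(s)$, so the leading term inside the limit becomes exactly $L_{N,-1/Nz}(s)$, while the factor $-N^{1-s}$ distributes onto the remaining correction terms to produce precisely the signs and coefficients appearing in the statement of the corollary. On the right-hand side, rearranging Proposition \ref{prop::mellin_transform_at_cusps}(3) (again at $2-s$) yields $L_{N,i\infty}(2-s) = -N^{s-1} L_{N,0}(s)$, and therefore $-N^{1-s} L_{N,i\infty}(2-s) = L_{N,0}(s)$. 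Putting these two rewrites together gives the desired identity.

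There is no real obstacle here: the proof is a purely formal manipulation once Theorem \ref{thm::mellin_transform_converge_limit} and the two functional equations are in hand. The only very mild point to verify is that the multiplication by the entire function $-N^{1-s}$ is compatible with the meromorphic identities, but this is automatic since the functional equations of Propositions \ref{prop::mellin_transform_z_converges}(2) and \ref{prop::mellin_transform_at_cusps}(3) are equalities of meromorphic functions on all of $\C$, and the poles of the objects involved are already accounted for in their respective statements.
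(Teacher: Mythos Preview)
Your proposal is correct and matches the paper's own argument exactly: the paper states that the corollary follows immediately from Theorem \ref{thm::mellin_transform_converge_limit} together with Proposition \ref{prop::mellin_transform_z_converges}(2) and Proposition \ref{prop::mellin_transform_at_cusps}(3), which is precisely the substitution $s\mapsto 2-s$ followed by the two functional equations that you carry out.
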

	The reason the polylogarithm terms in the case of $N=1$ and that for $N\ge 2$ are different is that the Fourier expansion (given in Proposition \ref{prop::fourier_at_arb_cusps}) at a cusp not $\Gamma_0(N)$-equivalent to $i\infty$ does not have the problematic geometric series term, and so there is no contribution of polylogarithm terms near 0. On the other hand, notice that the sum of the two limits for $N\ge2$ is of the same shape, and has the same polylogarithm terms, as the limit for $N=1$.
	
	In proving the $N=1$ version of both Propositions \ref{prop::mellin_transform_z_converges_infty_only} and \ref{prop::mellin_transform_z_converges}, as well as equation \ref{eqn::limit_of_mellin_N=1}, Bringmann and Kane \cite{bringmann_kane_N1_case} expressed $H_{1,z}^*$ in terms of the analytic continuation to $w=1$ of a certain derivative $\mc G_w(z, \tau)$ of the \emph{resolvent kernel} for $SL_2(\Z)$ (c.f. \cite{Gross-Zagier_resolvent_kernel}). They then analyzed the (regularized) Mellin transform of $\mc G_w$, which they then analytically continued to $w=1$ to obtain $L_{1,z}$. One could theoretically do something similar---express $H_{N,z}^*$ in terms of the analytic continuation of a certain derivative of the resolvent kernel for $\Gamma_0(N)$ (given in \cite[][{}(2.10)]{Gross-Zagier_resolvent_kernel}), and then analyze some regularization of the Mellin transform of the resolvent kernel. We instead go a different more elementary route. In this paper we directly analyze the Fourier expansion of $H_{N,z}^*(it)$ for $t$ in different intervals. 
	
	The paper proceeds as follows. In Section \ref{sctn::prelim}, we recall some basic facts about polar harmonic Maass forms, the family of functions $H_{N,z}^*(\tau)$, and the weight 2 Eisenstein series for $\Gamma_0(N)$. In Section \ref{sctn::fourier_exp_arb_cusps} we compute the Fourier expansion of $H_{N,z}^*(\tau)$ (for fixed $z$) at arbitrary cusps of $\Gamma_0(N)$; the proof is split up into two subsections. Section \ref{sctn::useful_lemmas} proves some useful lemmas on a certain integral of various pieces of the Fourier expansion at $i\infty$ and $0$, and also gives the proof for Propositions \ref{prop::mellin_transform_z_converges_infty_only} and \ref{prop::mellin_transform_z_converges}. The lemmas in this section also provide most of the technical analysis required for Proposition \ref{prop::mellin_transform_at_cusps} and Theorem \ref{thm::mellin_transform_converge_limit}. We then prove Proposition \ref{prop::mellin_transform_at_cusps} (and its corollary) in Section \ref{sctn::continue_series_jNm_infty}. Finally, Section \ref{sctn::proof_thm} proves Theorem \ref{thm::mellin_transform_converge_limit}.
	
	\subsection*{Acknowledgements}
	The author would like to thank Ben Kane for all of his help and support during this project. This project was funded by the Wong Shek-Yung Memorial Fund, as part of the Summer Research Programme of the Graduate School, The University of Hong Kong.
	
	\section{Preliminaries}\label{sctn::prelim}
	
	\subsection{Polar Harmonic Maass Forms}\label{sctn::prelim_phmf}
	Let us briefly recall the definition of polar harmonic Maass forms \cite[c.f.][{}13.3]{hmf_and_mmf}. First, for any $X \in GL_2(\Q^+)$ and $k\in \Z$ we define the weight-k \emph{slash} operator on any function $f:\mbb{H}\to \C$ by \[f|_k[X](\tau) := (\det X)^{k/2} j(X, \tau)^{-k} f(X\tau)\]
	where, for $X= \smalltwosqmat abcd$, we define  \[X\tau := \frac{a \tau + b}{c\tau + d} \quad \text{ and } \quad j(X, \tau) = c\tau + d.\]
	We can then define polar harmonic Maass forms for integral weight as follows.
	\begin{definition}
		For $k\in \Z$ and a congruence subgroup $\Gamma$, a \emph{polar harmonic Maass Form of weight $k$ on $\Gamma$} is a function $F:\mbb{H}\to \C$ that is real-analytic outside a discrete set of points and satisfies 
		\begin{itemize}
			\item \emph{modularity for weight $k$ and $\Gamma$}, i.e. $F|_k[M] = F$ for all $M\in \Gamma$;
			\item the differential equation $\Delta_kF = 0$ where $\Delta_k$ is the \emph{weight $k$ hyperbolic Laplacian} \[\Delta_k := -y^2 \left( \hipdiff{2}{}{x} + \hipdiff{2}{}{y} \right) + iky \left( \pdiff{}{x} + i \pdiff{}{y} \right) ;\]
			\item For all $a \in \mbb H$, there exists $n\in \N_0$ such that $(\tau - a)^n F(\tau)$ is bounded in a neighborhood of $a$;
			\item The function $F$ grows at most linear exponentially towards cusps of $\Gamma_0(N)$.
		\end{itemize}
		The space of polar harmonic Maass forms is denoted by $\mc H_k(\Gamma)$.
	\end{definition}
	Similar definitions hold for half-integral weight, though we will not be needing them here. Throughout this paper, we will only be considering polar harmonic Maass forms for the congruence subgroup $$\Gamma_0(N) := \{\smalltwosqmat abcd \in SL_2(\Z): N|c\}$$ for $N\in \N$. We recall the standard fact about the index of this subgroup in $SL_2(\Z)$: \[[SL_2(\Z) : \Gamma_0(N)] = N \prod_{p|N}\left(1 + \frac1p \right)\]
	where the product runs over all prime factors of $N$.
	
	\subsection{$H_{N,z}^*(\tau)$ for $z\in \mbb H$}\label{sctn::prelim_HNz}
	We consider the following Poincar\'e series which was introduced in \cite{construction_Maass_form} \[P_{N,s}(\tau, z) = \sum_{M\in \Gamma_0(N)} \frac{\varphi_s(M\tau, z)}{j(M, \tau)^2 |j(M,\tau)|^{2s}} \]
	where \[\varphi_s(\tau, z) := \frac{\Im{z}^{1+s}}{(\tau - z) (\tau - \bar z) |\tau - \bar z|^{2s}}.\]
	In \cite{construction_Maass_form}, it was shown that this Poincar\'e series has an analytic continuation to $s=0$ denoted by $\Im{z} \Psi_{2,N}(\tau, z)$ (this notation follows that of Petersson \cite{petersson_poincare_series}). 
	We can then define the weight 2 polar harmonic Maass form for $\Gamma_0(N)$
	\begin{equation}\label{eqn::H_Nz_defn}
		H_{N,z}^*(\tau) := -\frac{1}{2\pi } \Im{z} \Psi_{2,N}(\tau, z) \in \mc H_2(\Gamma_0(N)).
	\end{equation}
	
	An explicit expression for $H_{N,z}^*(\tau)$, including its Fourier expansion at $i\infty$, was computed in \cite{construction_Maass_form} and \cite{divisors_mod_forms}. If $\tau$ is not $\Gamma_0(N)$-equivalent to $z$, then \[-2\pi H_{N,z}^*(\tau) = \Im{z}\Psi_{2,N}(\tau, z) = \Sigma_1 + \Sigma_2 + \Sigma_3\]
	where $\Sigma_1$, $\Sigma_2$, and $\Sigma_3$ are functions of $\tau$ (for fixed $z$). To write out these functions explicitly, consider the \emph{Kloosterman sum}
	\begin{equation}\label{eqn::basic_kloosterman_sum_defn}
		K(m,n;c) := \sum_{\begin{smallmatrix} a, d \modc{c} \\ ad \equiv 1 \modc{c} \end{smallmatrix}} e\left(\frac{md+na}c\right)
	\end{equation}
	where for $w\in \C$ we use the shorthand $e(w) := e^{2\pi i w}$. More generally, for a cusp $\rho$ of $\Gamma_0(N)$, let $M_\rho\in SL_2(\Z)$ be such that $M_\rho (i\infty) = \rho$, and let $\ell_\rho$ be the \emph{cusp width} (i.e. smallest $\ell\in \N$ such that $\smalltwosqmat 1\ell 01 \in M_\rho^{-1}\Gamma_0(N) M_\rho$); then we define the generalized Kloosterman sum
	\begin{equation}\label{eqn::general_kloosterman_sum_defn}
		K_{i \infty, \rho}(m,n;c) := \sum_{ \smalltwosqmat abcd \in \Gamma_\infty \bs \Gamma_0(N) M_\rho / \Gamma_\infty^{\ell_\rho}} e\left(\frac{md}{\ell_\rho c} + \frac{na}c \right)
	\end{equation}
	where $\Gamma_\infty := \{\pm \smalltwosqmat 1n01: n\in \Z\}$. In particular, \[K_{i\infty, i\infty}(m,n;c) = \begin{cases}
		K(m,n;c) & \text{if } N|c,\\
		0 & \text{if } N\nmid c.
	\end{cases}\]
	We will also need to make use of the $I$- and $J$-Bessel functions, which we denote as usual by $I_k$ and $J_k$ respectively. In particular, we use the $I_1$ and $J_1$ Bessel functions given by \[J_1(x) = \sum_{m\ge 0} \frac{(-1)^m}{m!(m+1)!} \left( \frac{x}{2}\right)^{2m+1} \quad \text{ and } \quad I_1(x) = \frac1i J_1(ix).\]
	
	Now, in \cite{construction_Maass_form} it was calculated that \begin{equation}\label{eqn::infty_sigma3_full_expression}
		\begin{split}
			\Sigma_3 ={}& -\frac{6}{[SL_2(\Z):\Gamma_0(N)] \Im{\tau}} - 8\pi^3 \sum_{m\ge 1} m\sum_{c\ge 1} \frac{K_{i \infty, i\infty}(0,n;c)}{c^2}e^{2\pi i m \tau} \\
			&\quad - 8\pi^3 \sum_{m\ge 1} m e^{2\pi i m \tau} \left(\sum_{n\le -1} \sum_{c\ge 1} \frac{K_{i \infty, i\infty}(m,n;c)}{c^2} e^{-2\pi i n z}  + \sum_{n\ge 1} \sum_{c\ge 1} \frac{K_{i \infty, i\infty}(m,n;c)}{c^2} e^{-2\pi i n \bar z}\right)
		\end{split}
	\end{equation}
	which converges for all $\tau $ and $z$. Here, the first sum is just \[-2\pi \sum_{m\ge 1} j_{N,m}(i \infty) e^{2\pi i m \tau}.\]
	From \cite{construction_Maass_form} (for $\Im{\tau}<\Im z$) and \cite{divisors_mod_forms} (for $\Im{\tau}>\Im z$) we also have
	\begin{align}
		\Sigma_1 &= 2\pi \cdot \begin{cases}
			\sum_{n\le 0} e^{-2\pi i n \tau} e^{2\pi i n z} + \sum_{n\ge 1} e^{2\pi i n \tau} e^{-2\pi i n \bar z} & \text{if } \Im{\tau} < \Im{z} \\
			-\sum_{n\ge 1} \left( e^{-2\pi i n z} - e^{-2\pi i n \bar z}\right)  e^{2\pi i n \tau} & \text{if } \Im{\tau} > \Im{z} \\
		\end{cases} \nonumber\\
		&= \frac{-2\pi}{e^{2\pi i (\tau - z)} - 1} + 2\pi \sum_{n\ge 1} e^{-2\pi i n \bar z} e^{2\pi i n \tau} \label{eqn::infty_sigma1_full_expression}
	\end{align}
	which holds for all $\tau \in \mbb H\bs \{z\}$. Finally, from \cite{construction_Maass_form} we have 
	\begin{equation}\label{eqn::infty_sigma2_full_expression}
		\begin{split}
			\Sigma_2 ={}& 8\pi^3 \sum_{m\ge 1} m e^{2\pi i m \tau} \left(\sum_{n\le -1} \sum_{c\ge 1} \frac{K_{i \infty, i\infty}(m,n;c)}{c^2} e^{-2\pi i n z}  + \sum_{n\ge 1} \sum_{c\ge 1} \frac{K_{i \infty, i\infty}(m,n;c)}{c^2} e^{-2\pi i n \bar z}\right) \\
			&\quad + 4\pi^2 \sum_{m\ge 1} \sum_{n,c\ge 1} \sqrt{\frac mn} \frac{K_{i\infty, i\infty}(m,-n;c)}{c} I_1 \left(\frac{4 \pi \sqrt{mn}}c \right) e^{2\pi i n z} e^{2\pi i m \tau}\\
			& \quad + 4\pi^2 \sum_{m\ge 1} \sum_{n,c\ge 1} \sqrt{\frac mn} \frac{K_{i\infty, i\infty}(m,n;c)}{c} J_1 \left(\frac{4 \pi \sqrt{mn}}c \right) e^{-2\pi i n \bar z} e^{2\pi i m \tau},
		\end{split}
	\end{equation}
	which originally holds for $\Im{\tau} > \max\{\Im{z}, 1/\Im z \}$. However, it will be shown in Lemma \ref{lem::I2_convergence} that for large enough $\Im{z}$, this series in fact converges nicely for $\Im \tau > 1/\Im z$. This is because the residue of $H_{N,z}^*$ at the poles $\Gamma_0(N)z$ is $\frac{1}{4\pi i} \#\mr{Stab}_z(\Gamma_0(N))$, which is precisely $\frac{1}{2\pi i}$ if $z$ is not an elliptic fixed point (i.e. $z\notin \Gamma_0(N)i \cup \Gamma_0(N) \omega$ where $\omega = e^{2\pi i/3}$). However, it is easy to see that the residue of $(-\frac1{2\pi}) \Sigma_1$ at the poles $\Gamma_0(N)z$ is precisely $\frac{1}{2\pi i}$, which implies that $\Sigma_2$ is actually holomorphic on $\Gamma_0(N)z$ for $z$ not an elliptic fixed point. Thus, one would correctly expect that $\Sigma_2$ should also converge at $\tau = z$, and more generally at $\Im \tau = \Im z$. 
	
	On the other hand, the condition $\Im \tau > 1/\Im z$ is still necessary for convergence. This is because $I_1(x) = \Theta(x^{-1/2} e^x)$ by \cite[][{}(9.7.1)]{handbook_fns}, where by $f = \Theta(g)$ we mean that there exists $c,C>0$ such that $cg<f<Cg$. This implies that for large enough $m$ and $n$, and for $c=1$, the expression being summed is essentially some polynomial expression in $m$ and $n$ times the exponential factor \[\exp\left(2\pi (2\sqrt{mn} - ny - m v)\right)\]
	where $y = \Im z $ and $v = \Im \tau$. If $yv \le 1$, and if $m \approx \frac yvn$, then \[\exp\left(2\pi (2\sqrt{mn} - ny - m v)\right) \approx \exp\left(4\pi n \sqrt{\frac yv}(1 - \sqrt{yv}) \right)\]
	and the series over $n$ would diverge. 
	
	
	Combining the expressions for $\Sigma_1$, $\Sigma_2$, and $\Sigma_3$ gives us the Fourier expansion of $H_{N,z}^*(\tau)$ at $i\infty$, 
	\begin{proposition}\label{prop::fourier_expansion_infty}
		For $\Im{\tau} > \max\{\Im{z}, \frac{1}{\Im{z}}\}$, we have 
		\[\begin{split}
			H_{N,z}^*(\tau) ={}& \frac{3}{[SL_2(\Z): \Gamma_0(N)] \pi \Im{\tau}} + \sum_{m\ge 1} \left( e^{-2\pi i m z} - e^{-2\pi i m \bar z}\right) e^{2\pi i m \tau} \\
			& \quad + 2\pi \sum_{m\ge 1} \sum_{n,c\ge 1} \sqrt{\frac mn} \frac{K_{i\infty, i\infty}(m,-n;c)}{c} I_1 \left(\frac{4 \pi \sqrt{mn}}c \right) e^{2\pi i n z} e^{2\pi i m \tau}\\
			& \quad + 2\pi \sum_{m\ge 1} \sum_{n,c\ge 1} \sqrt{\frac mn} \frac{K_{i\infty, i\infty}(m,n;c)}{c} J_1 \left(\frac{4 \pi \sqrt{mn}}c \right) e^{-2\pi i n \bar z} e^{2\pi i m \tau} \\
			&\quad + 4\pi^2 \sum_{m\ge 1} m\left(\sum_{c\ge 1} \frac{K_{i\infty, \rho}(m,0;c)}{c^2} \right) e^{2\pi i m \tau}
		\end{split}.\]
	\end{proposition}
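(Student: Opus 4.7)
The plan is to assemble the claimed Fourier expansion from the decomposition $H_{N,z}^*(\tau) = -\frac{1}{2\pi}(\Sigma_1 + \Sigma_2 + \Sigma_3)$ already introduced in the preceding discussion, using the explicit expressions \eqref{eqn::infty_sigma1_full_expression}, \eqref{eqn::infty_sigma2_full_expression}, and \eqref{eqn::infty_sigma3_full_expression}. Since the hypothesis $\Im{\tau} > \max\{\Im{z}, 1/\Im{z}\}$ forces in particular $\Im{\tau} > \Im{z}$, I would select the second case in \eqref{eqn::infty_sigma1_full_expression}, giving
\[
\Sigma_1 = -2\pi \sum_{n\ge 1} \bigl(e^{-2\pi i n z} - e^{-2\pi i n \bar z}\bigr) e^{2\pi i n \tau}.
\]

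The central observation is that the two ``mixed'' double sums appearing in $\Sigma_3$, namely the sums $\sum_{m\ge 1} m\, e^{2\pi i m\tau}\sum_{c\ge 1} K_{i\infty,i\infty}(m,n;c)/c^2$ weighted by $e^{-2\pi i n z}$ (for $n\le -1$) and by $e^{-2\pi i n \bar z}$ (for $n\ge 1$), are precisely the negatives of the first two lines of $\Sigma_2$ in \eqref{eqn::infty_sigma2_full_expression}. These pieces therefore cancel in the sum $\Sigma_2 + \Sigma_3$. What remains of $\Sigma_3$ is the $\Im{\tau}$-dependent piece $-6/([SL_2(\Z):\Gamma_0(N)]\Im{\tau})$ together with the single sum indexed by $c$ whose Kloosterman entry is $K_{i\infty,i\infty}(0,m;c)$; what remains of $\Sigma_2$ is the pair of double sums involving the Bessel functions $I_1$ and $J_1$.

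Dividing the combined expression $\Sigma_1 + \Sigma_2 + \Sigma_3$ by $-2\pi$ and reading off the four resulting groups of terms then reproduces the stated Fourier expansion verbatim: the constant-in-$\tau$ part of $\Sigma_3$ contributes the $3/([SL_2(\Z):\Gamma_0(N)]\pi\Im{\tau})$ term, $\Sigma_1$ contributes the plain exponential sum in $(e^{-2\pi i m z} - e^{-2\pi i m \bar z})e^{2\pi i m\tau}$, $\Sigma_2$ contributes the two Bessel-function double sums, and the residual part of $\Sigma_3$ contributes the final single sum (after identifying $K_{i\infty,i\infty}(m,0;c) = K_{i\infty,i\infty}(0,m;c)$, which is the standard symmetry of the classical Kloosterman sum and accounts for the mild notational shift in the proposition statement).

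The main obstacle is purely one of bookkeeping: the signs, the ranges of summation over $n$, and the Kloosterman-symmetry identification must be tracked carefully so that the two ``mixed'' blocks in $\Sigma_2$ and $\Sigma_3$ truly cancel. As for convergence, $\Sigma_3$ converges for all $\tau, z$ by \eqref{eqn::infty_sigma3_full_expression}, the selected form of $\Sigma_1$ requires only $\Im{\tau} > \Im{z}$, and $\Sigma_2$ converges in the form written for $\Im{\tau} > \max\{\Im{z}, 1/\Im{z}\}$; consequently the recombined series is valid precisely on the region claimed in the proposition, and no further analytic work is needed beyond the cancellation of the mixed terms.
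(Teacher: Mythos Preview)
Your proposal is correct and matches the paper's own treatment: the paper simply says ``Combining the expressions for $\Sigma_1$, $\Sigma_2$, and $\Sigma_3$ gives us the Fourier expansion,'' and you have spelled out exactly that combination, including the key cancellation of the mixed $8\pi^3$-blocks between \eqref{eqn::infty_sigma2_full_expression} and \eqref{eqn::infty_sigma3_full_expression} and the Kloosterman symmetry $K_{i\infty,i\infty}(m,0;c)=K_{i\infty,i\infty}(0,m;c)$.
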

	However, as mentioned, if we assume that $\Im z$ is large enough, the above Fourier expansion holds for all $\Im \tau > 1/\Im z$, as long as we replace the geometric series in $\tau - z$ appropriately.
	
	\subsection{Weight 2 Eisenstein Series for $\Gamma_0(N)$}\label{sctn::eisenstein_series}
	
	For a cusp $\rho$ of $\Gamma_0(N)$, we can construct the harmonic weight 2 Eisenstein series $E_{2,N,\rho}^*(\tau)$ for $\Gamma_0(N)$ by analytically continuing the Poincar\'e series (\textit{a priori} defined only for $\Re{s}>0$)
	\[E_{2,N,\rho,s}^*(\tau) := \sum_{M\in \Gamma_\rho \bs \Gamma_0(N)} j(M_\rho M, \tau)^{-2} |j(M\rho M, \tau)|^{-2s}\]
	to $s=0$, where $M_\rho \in SL_2(\Z)$ is such that $M_\rho(i\infty) = \rho$, and $\Gamma_\rho = M_\rho \Gamma_\infty M_\rho^{-1}$ is the stabilizer of $\rho$ in $\Gamma_0(N)$ with $\Gamma_\infty = \{\pm \smalltwosqmat1n01\}$ \cite{analytic_continue_eisenstein_hecke}. By Theorem 1.2 of \cite{divisors_mod_forms}, we have 
	\[-E_{2,N,\rho}^*(\tau) = H_{N,\rho}^*(\tau) := \lim_{z\to \rho} H_{N,z}^*(\tau) = \frac{3}{[SL_2(\Z):\Gamma_0(N)]\pi \Im{\tau}} - \delta_{\rho, i\infty} + \sum_{m\ge 1} j_{N,m}(\rho) e^{2\pi i m \tau}\]
	where $\delta_{\rho, i\infty}=1$ if $\rho \in \Gamma_0(N) \cdot (i\infty)$ and $=0$ otherwise, and \[j_{N,n}(\rho) = \frac{4\pi^2 n }{\ell_\rho} \sum_{c\ge 1} \frac{K_{i\infty, \rho}(n,0;c)}{c^2}.\]

	\section{The Fourier Expansion of $H_{N,z}^*(\tau)$ at arbitrary cusps}\label{sctn::fourier_exp_arb_cusps}

	Let us now compute the Fourier expansion at an arbitrary cusp $\rho = \alpha/\gamma$ not $\Gamma_0(N)$-equivalent to $i\infty$, where $\gamma|N$, and let $L = \smalltwosqmat{\alpha}{\beta} {\gamma} {\delta} \in SL_2(\Z)$ be such that $\rho = L(i\infty)$. Let $\ell = \ell_\rho$ be the width of the cusp, explicitly given by $$\ell = \frac{N / \gamma}{\gcd(N/\gamma, \gamma)}.$$
	
	The Fourier expansion at $\rho$ ($\ne g(i\infty)$ for $g\in \Gamma_0(N)$) is then given in the following proposition. 
	\begin{proposition}\label{prop::fourier_at_arb_cusps}
		With the above notation, for $\Im{\tau} > \max\{\Im{z}, 1/\Im{z}\}$ we have 
		\[\begin{split}
			H_{N,z}^*|_2[L](\tau) ={}& \frac{3}{[SL_2(\Z): \Gamma_0(N)] \pi \Im{\tau}} + \frac{4\pi^2}{\ell^2} \sum_{m\ge 1} m \left( \sum_{c\ge 1} \frac{K_{i\infty, \rho}(m,0;c)}{c^2} \right) e^{2\pi i m \tau / \ell} \\
			&\quad- \frac{2\pi}{\ell^{3/2}} \sum_{m\ge 1} \sqrt m \left(\sum_{n,c \ge 1} \frac{e^{-2\pi i n \bar z} K_{i \infty, \rho} (m, n; c)}{c \sqrt n} J_1 \left( \frac{4\pi}{c} \sqrt{\frac{mn}{\ell}} \right) \right) e^{2\pi i m \tau / \ell}\\
			&\quad - \frac{2\pi}{\ell^{3/2}} \sum_{m\ge 1} \sqrt m \left(\sum_{n,c \ge 1} \frac{e^{2\pi i n z} K_{i \infty, \rho} (m, -n; c)}{c \sqrt n} I_1 \left( \frac{4\pi}{c} \sqrt{\frac{mn}{\ell}} \right) \right) e^{2\pi i m \tau / \ell}.
		\end{split}\]
	\end{proposition}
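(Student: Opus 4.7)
The approach will mirror the strategy used in \cite{construction_Maass_form} for the cusp at $i\infty$, but now applied to $H_{N,z}^*|_2[L](\tau)$. The starting point is the Poincar\'e series defining $H_{N,z}^*(\tau)$. Using the cocycle identity $j(ML,\tau) = j(M, L\tau)\, j(L,\tau)$ together with the substitution $M' = ML$, the slash operator converts
\[P_{N,s}(\tau, z)\big|_2[L] = \sum_{M' \in \Gamma_0(N) L} \frac{\varphi_s(M'\tau, z)}{j(M', \tau)^2 |j(M', \tau)|^{2s}},\]
so we obtain a Poincar\'e-type series summed over the coset $\Gamma_0(N) L$. Crucially, since $\rho$ is not $\Gamma_0(N)$-equivalent to $i\infty$, no element of $\Gamma_0(N) L$ has bottom-left entry $c = 0$, so there is no ``identity contribution''. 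This is exactly why the Fourier expansion at $\rho$ has no geometric series term analogous to $\Sigma_1$ in Proposition \ref{prop::fourier_expansion_infty}.

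The plan is then to group the terms of the series by the bottom row $(c, d)$, using the double coset decomposition $\Gamma_\infty \backslash \Gamma_0(N) L / \Gamma_\infty^{\ell}$ where $\Gamma_\infty^\ell$ acts on the right by translations of size $\ell = \ell_\rho$ (the cusp width at $\rho$, which governs the $\tau$-periodicity of $H_{N,z}^*|_2[L]$). For each fixed $c$, one unfolds the sum over the left $\Gamma_\infty$-coset as a full sum in the $d$-variable, and applies Poisson summation to rewrite the result as a Fourier series in $\tau/\ell$. The Fourier transforms of the test function $\varphi_s$ produce, after analytic continuation to $s = 0$, the $I_1$- and $J_1$-Bessel function expressions appearing in the statement, while the sums over $d$ (modulo the relevant period) collapse into the generalized Kloosterman sums $K_{i\infty, \rho}(m, n; c)$ defined in (\ref{eqn::general_kloosterman_sum_defn}). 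The constant-in-$z$ pieces similarly yield $K_{i\infty, \rho}(m, 0; c)$, contributing the fourth sum and the $\frac{3}{[SL_2(\Z):\Gamma_0(N)]\pi \Im\tau}$ term coming from the pole of the continuation.

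Steps in order: (1) apply $|_2[L]$ and rewrite as a sum over $\Gamma_0(N) L$; (2) decompose into double cosets using $\Gamma_\infty \backslash \Gamma_0(N)L / \Gamma_\infty^\ell$, so that the sum becomes indexed by a representative $c$ together with representatives $(a, d)$ modulo the appropriate quotients; (3) perform Poisson summation over the $d$-variable; (4) identify the arising exponential sums as $K_{i\infty, \rho}(m, n; c)$; (5) evaluate the relevant Fourier transforms (needing analytic continuation in $s$) using the standard integral representations of the Bessel functions, as in \cite{construction_Maass_form}; (6) take the analytic continuation to $s = 0$ and combine terms. The main obstacle will be steps (3)--(5): the analytic continuation to $s = 0$ of the transformed integrals, and carefully matching the resulting exponential sums to the generalized Kloosterman sum $K_{i\infty, \rho}$ as defined. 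Fortunately, almost all of the technical work has already been done in \cite{construction_Maass_form}; one needs to check that the convergence estimates there go through verbatim once $\Gamma_0(N)$ is replaced by the coset $\Gamma_0(N) L$, which is straightforward since the two sets differ only by a bounded right translation. The convergence hypothesis $\Im \tau > \max\{\Im z, 1/\Im z\}$ is inherited directly from the analogous bound for the expansion at $i\infty$.
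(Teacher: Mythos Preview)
Your proposal is correct and follows essentially the same route as the paper: apply $|_2[L]$ to rewrite the Poincar\'e series over $\Gamma_0(N)L$, use the double coset decomposition $\Gamma_\infty\backslash\Gamma_0(N)L/\Gamma_\infty^\ell$, apply Poisson summation in both the $m$- and $n$-variables, identify the exponential sums as $K_{i\infty,\rho}(m,n;c)$, and analytically continue to $s=0$ where the integrals produce the $I_1$ and $J_1$ Bessel functions. The one technical device you do not make explicit, but which is central to the paper's execution, is the preliminary splitting $\varphi_s(M\tau,z) = \big(\varphi_s(M\tau,z)-\varphi_s(\tfrac ac,z)\big) + \varphi_s(\tfrac ac,z)$ into $\Sigma_{N,\rho}^{(1)}(s)+\Sigma_{N,\rho}^{(2)}(s)$: the first piece already converges absolutely at $s=0$, while the second isolates the $(m,n)=(0,0)$ term whose continuation via $\zeta(2s+1)$ (and an explicit evaluation of $K_{i\infty,\rho}(0,0;c)$) produces the $\tfrac{3}{[SL_2(\Z):\Gamma_0(N)]\pi\Im\tau}$ term.
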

	
	To compute the Fourier expansion, we use the same technique used in \cite{construction_Maass_form}---we split the original Poincar\'e series $P_{N,s}(\tau, z)$ into two separate sums, and then analytically continue each one to $s=0$. Using the identity $j(M, L\tau) j(L, \tau) = j(ML, \tau)$, we have
	\begin{align*}
		\frac{P_{N, s}(L\tau, z)}{j(L, \tau)^{2} |j(L, \tau)|^{2s}} &= \sum_{M\in \Gamma_0(N)} \frac{\varphi_s(ML\tau, z)}{j(ML, \tau)^2 |j(ML, \tau)|^{2s}} = \sum_{M\in \Gamma_0(N)L} \frac{\varphi_s(M\tau, z)}{j(M, \tau)^2 |j(M, \tau)|^{2s}} \\
		&= 2\;\;\labelunder{\Sigma_{N,\rho}^{(1)}(s)}{ \sum_{ \begin{smallmatrix} M = \smalltwosqmat abcd\in \Gamma_0(N) L \\ c\ge 1\end{smallmatrix}} \frac{\varphi_s(M \tau, z) - \varphi_s(\frac ac, z)}{j(M, \tau)^2 |j(M, \tau)|^{2s}} } + 2\;\;\labelunder{\Sigma_{N,\rho}^{(2)}(s)}{ \sum_{ \begin{smallmatrix} M  = \smalltwosqmat abcd\in \Gamma_0(N) L \\ c\ge 1\end{smallmatrix}} \frac{ \varphi_s(\frac ac, z)}{j(M, \tau)^2 |j(M, \tau)|^{2s}} }.
	\end{align*}
	The reason we don't have a third sum is that for $\rho$ not $\Gamma_0$-equivalent to $i\infty$, every matrix $\smalltwosqmat abcd\in \Gamma_0(N) L$ must satisfy $c\ne 0$. 
	
	It remains to analytically continue these sums to $s=0$, the computation for which has been split in the following two sections. The following computation proves useful:
	\begin{lemma}[Equation (4.3) of \cite{construction_Maass_form}] \label{lem::compute_g_n}
		For any $n\in \Z$, any $w_1 \in \C\bs \R$, and any $w_2 \in \mbb H$, define the integral \[g_n(w_1, w_2) := \int_\R \frac{e^{-2\pi i n t}}{(w_1 + t)(w_2 + t)}dt.\]
		Then \[g_n(w_1, w_2) = \begin{cases}
			0 & \text{if } n\le 0 \text{ and } w_1\in \mbb H,\\
			2\pi i (w_2 - w_1)^{-1} e^{2\pi i n w_1} & \text{if } n\le 0 \text{ and } w_1 \in -\mbb H,\\
			2\pi i (w_2 - w_1)^{-1} e^{2\pi i n w_2} & \text{if } n> 0 \text{ and } w_1 \in -\mbb H,\\
			2\pi i (w_2 - w_1)^{-1} \left(e^{2\pi i n w_2} - e^{2\pi i n w_1} \right) & \text{if } n> 0, w_1 \in \mbb H, \text{and } w_1\ne w_2,\\
			- 4\pi^2 n e^{2\pi i n w_1} & \text{if } n>0 \text{ and } w_1 = w_2.
		\end{cases}\]
	\end{lemma}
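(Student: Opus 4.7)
The plan is to evaluate the integral by contour integration. The integrand $\frac{e^{-2\pi i n t}}{(w_1+t)(w_2+t)}$ has at most two simple poles in the complex $t$-plane, located at $t = -w_1$ and $t = -w_2$. Since $w_2 \in \mathbb{H}$, the pole at $t=-w_2$ lies in the lower half plane $-\mathbb{H}$; the location of the pole at $t=-w_1$ depends on whether $w_1$ is in $\mathbb{H}$ or $-\mathbb{H}$. The proof thus splits into cases based on the sign of $n$ (which governs which half plane the exponential decays in) and the half plane containing $w_1$.

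First I would handle the generic case $w_1 \neq w_2$ with $n > 0$. Then $|e^{-2\pi i n t}| = e^{2\pi n \Im(t)}$ decays as $\Im(t) \to -\infty$, so I close the contour with a large semicircle in the lower half plane, traversed clockwise. Since the integrand decays like $1/|t|^2$ uniformly on the semicircle, a direct estimate (not even needing Jordan's lemma) shows the semicircle contribution vanishes. Computing residues at the enclosed poles and multiplying by $-2\pi i$ for the clockwise orientation yields the stated formulas for both subcases $w_1 \in \mathbb{H}$ (two enclosed poles) and $w_1 \in -\mathbb{H}$ (only $-w_2$ enclosed). The case $n < 0$ is entirely symmetric: close in the upper half plane counterclockwise. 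Now $-w_2$ is never enclosed, while $-w_1$ is enclosed iff $w_1 \in -\mathbb{H}$, giving the two cases listed.

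The case $n=0$ requires slightly separate treatment because $e^{-2\pi i n t} \equiv 1$ does not decay in either direction. However, the full integrand still decays like $1/|t|^2$, so the integral converges absolutely and we may still close in either half plane with vanishing semicircle; I would close in the upper half plane to match the $n<0$ analysis, obtaining the same two subcases. Finally, for the confluent case $w_1=w_2$ (which only appears in the listed result when $n>0$), the integrand has a double pole at $t=-w_1$ in $-\mathbb{H}$. Closing in the lower half plane and computing the residue of $e^{-2\pi i nt}/(t+w_1)^2$ at $t=-w_1$ via differentiation gives $-2\pi i n \, e^{2\pi i n w_1}$, and multiplying by $-2\pi i$ produces the stated $-4\pi^2 n\, e^{2\pi i n w_1}$.

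There is no serious obstacle: the only mild subtleties are verifying that the semicircle contributions vanish (automatic from the $1/|t|^2$ bound of the full integrand, avoiding any conditional convergence issues that would arise if one naively applied partial fractions term by term) and tracking orientation signs carefully in each of the roughly six cases. The whole proof is essentially a bookkeeping exercise with the residue theorem.
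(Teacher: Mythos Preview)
Your proposal is correct and follows essentially the same route as the paper: the paper's proof sketch also evaluates $g_n$ by the Residue Theorem, closing the real axis with a contour in the appropriate half plane (the paper uses the rectangle with vertices $-R,\,R,\,-R+i\epsilon R,\,R+i\epsilon R$ rather than a semicircle) and letting $R\to\infty$. Your case analysis, residue computations, and handling of the double-pole case $w_1=w_2$ are all standard and match what the paper intends.
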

	This lemma can be proved by using the Residue Theorem on the square with vertices $-R$, $R$, $-R + i\epsilon R$ and $R+i\epsilon R$, and then taking the limit as $R\to \infty$. Here $\epsilon= -1$ if $n\le0$, and $=1$ if $n>0$.
	
	\subsection{Analytically continuing $\Sigma_{N, \rho}^{(2)}(s)$ to $s=0$}
	
	Let us first compute the analytic continuation of $\Sigma_{N, \rho}^{(2)}(s)$. 	
	In order to express the result, let $\mbb P(n)$ denote the set of prime factors of $n$. Split $\ell = \ell_1 \ell_2$ such that $\mbb P(\ell_1) = \mbb P(\ell) \cap \mbb P(\gamma)$ and $\gcd(\ell_2, \gamma) = 1 = \gcd(\ell_1, \ell_2)$. Then as $\ell | N/\gamma$, we can write $N/\gamma = N_1 \ell_2$. We can also write $\ell_1 \gamma = A_1A_2$ where $\mbb P(A_1) = \mbb P(\ell_1 \gamma) \cap \mbb P(N_1)$ and $\gcd(A_2, N_1) = 1 = \gcd(A_1, A_2)$. Let $\mu$ as usual denote the M\"obius function and $\phi$ Euler's phi function.
	\begin{lemma}\label{lem::analytic_cont_Sigma2_arb_s}
		The sum $\Sigma_{N, \rho}^{(2)}(s)$ can be analytically continued to $\Re s > -1/4$ with analytic continuation given by 
		\[\begin{split}
			- &\frac{\sqrt{\pi}}{1+s} \frac{\Gamma(\frac12 +s)}{ \Gamma(1+s)} \left( \frac\ell{\Im{\tau}} \right)^{1+2s} \left( \int_\R \varphi_s(w,z) dw\right) \frac{A_1}{N_1 \phi(\ell_2 A_1)} \frac{s\zeta(2s+1)}{\zeta(2s+2)} \sum_{g|N/\gamma} \mu(g) \frac{\phi(g \gamma \ell)}{(g \gamma \ell)^{2+2s}} \prod_{p| g \gamma \ell} \frac{1}{1 - p^{-2 - 2s}} \\
			& +\sum_{(m,n)\in \Z^2\bs\{(0,0)\}} (\Im{z})^{1+s} \left(\frac{1}{\ell^{2+2s}}\int_\R \frac{e^{-2\pi i m t}}{(\frac \tau \ell + t)^{2+s} (\frac{\bar \tau}{\ell} + t)^{s}} dt \right) \left(\int_\R \frac{e^{-2\pi i n w}}{(w- z)^{1+s}(w- \bar z)^{1+s}} dw\right) \sum_{c\ge 1} \frac{K_{i\infty, \rho}(m,n;c) }{c^{2+2s}}.
		\end{split}\]
	\end{lemma}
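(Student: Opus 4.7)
The plan is to unfold $\Sigma_{N,\rho}^{(2)}(s)$ via a double-coset decomposition of $\Gamma_0(N)L$ followed by two applications of Poisson summation. For each fixed $c\ge 1$, I parameterize $M = \smalltwosqmat{a}{b}{c}{d} \in \Gamma_0(N)L$ by writing $M = \gamma_1 M' \gamma_2$ with $\gamma_1 = \smalltwosqmat{1}{n_1}{0}{1} \in \Gamma_\infty$, $\gamma_2 = \smalltwosqmat{1}{n_2\ell}{0}{1} \in \Gamma_\infty^{\ell}$, and $M' = \smalltwosqmat{a'}{b'}{c}{d'}$ running over representatives of $\Gamma_\infty \backslash \Gamma_0(N)L / \Gamma_\infty^{\ell}$ with lower-left entry $c$. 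Left multiplication by $\gamma_1$ shifts $a$ by $n_1 c$ (so $a/c \mapsto a/c + n_1$) while right multiplication by $\gamma_2$ shifts $d$ by $n_2 \ell c$ (so $j(M,\tau) \mapsto j(M,\tau) + n_2 \ell c$), rewriting $\Sigma_{N,\rho}^{(2)}(s)$ as a triply indexed sum over $c$, $M'$, and $(n_1,n_2) \in \Z^2$.

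Next I would apply Poisson summation to the $n_1$- and $n_2$-sums separately. The $n_1$-sum of $\varphi_s(a'/c + n_1, z)$ becomes $\sum_{n \in \Z} e(n a'/c) \int_\R \varphi_s(w,z) e^{-2\pi i n w}\,dw$, which for $n = 0$ produces precisely the factor $\int_\R \varphi_s(w,z)\,dw$ appearing in the main term. The $n_2$-sum, after extracting $(\ell c)^{-2-2s}$, becomes $\sum_{m\in\Z} e(m d'/(\ell c)) \int_\R e^{-2\pi i m t} (\tau/\ell + t)^{-2-s}(\bar\tau/\ell + t)^{-s}\,dt$. The remaining sum over representatives $M'$ of $e(m d'/(\ell c) + n a'/c)$ collapses precisely to the generalized Kloosterman sum $K_{i\infty,\rho}(m,n;c)$ of \eqref{eqn::general_kloosterman_sum_defn}, producing the desired double sum over $(m,n)\in\Z^2$.

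Separating the frequency $(m,n) = (0,0)$ from the rest yields the main term. Its $m = 0$ integral, $\int_\R (\tau/\ell+t)^{-2-s}(\bar\tau/\ell+t)^{-s}\,dt$, is computed via a standard contour-integral (or Beta-function) identity to give the factor $\frac{\sqrt\pi\,\Gamma(1/2+s)}{(1+s)\Gamma(1+s)}(\ell/\Im{\tau})^{1+2s}$ up to normalization. The remaining arithmetic factor $\sum_{c\ge 1} K_{i\infty,\rho}(0,0;c)/c^{2+2s}$ is a Dirichlet series counting the sizes of the double cosets with given lower-left entry $c$; identifying the cusp data via $\gamma,\ell_1,\ell_2,A_1,A_2$ and applying M\"obius inversion to extract the coprimality conditions defining membership in $\Gamma_0(N)L$ produces the stated closed form $\frac{A_1}{N_1\phi(\ell_2 A_1)} \frac{s\zeta(2s+1)}{\zeta(2s+2)} \sum_{g|N/\gamma} \mu(g) \frac{\phi(g\gamma\ell)}{(g\gamma\ell)^{2+2s}} \prod_{p|g\gamma\ell} (1-p^{-2-2s})^{-1}$.

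The main obstacle is this last arithmetic computation: pinning down exactly which $c$ admit a representative in $\Gamma_0(N)L$, counting those representatives with correct multiplicities, and recognizing the resulting Dirichlet series as the product of zeta-quotients and a finite M\"obius sum stated above. Convergence of the non-zero-frequency tail for $\Re{s} > -1/4$ is comparatively routine: the Fourier integrals in $m$ and $n$ decay exponentially in $|m|$, $|n|$ by residue calculus (the integrands are meromorphic with poles off the real line), and Weil's bound $|K_{i\infty,\rho}(m,n;c)| \ll_\varepsilon c^{1/2+\varepsilon}\gcd(m,n,c)^{1/2}$ then gives convergence of the remaining $c$-sum in precisely the claimed half-plane $\Re(s) > -1/4$.
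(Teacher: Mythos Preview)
Your proposal is correct and follows essentially the same route as the paper: double-coset decomposition, two Poisson summations, identification of the exponential sums as $K_{i\infty,\rho}(m,n;c)$, separation of the $(m,n)=(0,0)$ term, and a Weil-type bound for the tail. The only point to flag is that the bound $|K_{i\infty,\rho}(m,n;c)|\ll_\varepsilon c^{1/2+\varepsilon}\gcd(m,n,c)^{1/2}$ you invoke is not a black-box citation for arbitrary cusps $\rho$ of $\Gamma_0(N)$---the paper proves the needed estimate directly (its Lemma~\ref{lem::weil_bound_general_kloostermansum}) by reducing $K_{i\infty,\rho}$ to a short combination of classical Kloosterman sums via the Chinese Remainder Theorem, and similarly derives the closed form for $\sum_c K_{i\infty,\rho}(0,0;c)/c^{2+2s}$ from an explicit formula for $K_{i\infty,\rho}(0,0;c)$ (Lemma~\ref{lem::formula_for_K_infty,rho_0,0,c}); these are exactly the two computations you correctly identify as the main obstacle.
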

	\begin{proof}
		We decompose the sum into sums over cosets by writing $M = \smalltwosqmat1n01 M' \smalltwosqmat{1}{\ell m}{0}{1}$, 
		where $n$ and $m$ range over all $\Z$, and $M'$ ranges over the cosets in $\Gamma_\infty \bs \Gamma_0(N) L / \Gamma_\infty^{\ell}$. Applying Poisson summation twice, we then get 
		\begin{align*}
			\Sigma_{N, \rho}^{(2)}(s) 
			&= \sum_{c\ge 1} \frac{1}{c^{2+2s}} \sum_{M  = \smalltwosqmat abcd\in \Gamma_\infty \bs \Gamma_0(N) L / \Gamma_\infty^{\ell}} \sum_{m\in \Z} \sum_{n\in \Z} y^{1+s} \left(\int_\R \frac{e^{-2\pi i m t}}{(\tau + \frac dc + \ell t)^2 |\tau + \frac dc + \ell t|^{2s}} dt \right) \\ &\qquad \qquad  \left(\int_\R \frac{e^{-2\pi i n w}}{(\frac ac - z + w)(\frac ac - \bar z + w)|\frac ac - \bar z + w|^{2s}} dw\right)
		\end{align*}
		Shifting the integrals using the substitutions $\frac dc + \ell t\mapsto t$ and $\frac ac + w\mapsto w$ we have 
		\begin{align}
			\Sigma_{N, \rho}^{(2)}(s) &= \sum_{c\ge 1} \frac{1}{c^{2+2s}} \sum_{M  = \smalltwosqmat abcd\in \Gamma_\infty \bs \Gamma_0(N) L / \Gamma_\infty^{\ell}} \sum_{m\in \Z} \sum_{n\in \Z} y^{1+s} e(dm/\ell c) \left( \frac 1\ell \int_\R \frac{e^{-2\pi i m t/\ell}}{(\tau + t)^2 |\tau + t|^{2s}} dt \right) \\ &\qquad \qquad  e(an/c)\left(\int_\R \frac{e^{-2\pi i n w}}{(w- z)(w- \bar z)|w- \bar z|^{2s}}dw\right) \nonumber\\
			&= \sum_{m,n\in \Z} y^{1+s} \left(\frac{1}{\ell^{2+2s}}\int_\R \frac{e^{-2\pi i m t}}{(\tau/\ell + t)^{2+s} (\bar \tau/\ell + t)^{s}} dt \right) \left(\int_\R \frac{e^{-2\pi i n w}}{(w- z)^{1+s}(w- \bar z)^{1+s}} dw\right) \sum_{c\ge 1} \frac{K_{i\infty, \rho}(m,n;c) }{c^{2+2s}} \label{eqn::sigma_N_rho_(2)_partial_formula}
		\end{align}
		with $y:= \Im{z}$. \textit{A priori}, this identity holds only formally. We show absolute convergence, and uniform convergence for $s$ in right half-planes. Note that the proof of Lemma 3.2 in \cite{construction_Maass_form} (c.f. inequalities (3.7) and (3.8) of \cite{construction_Maass_form}) shows that 
		\[\left| \int_\R \frac{e^{-2\pi i m t}}{(\tau/\ell + t)^{2+s} (\bar \tau/\ell + t)^{s}} dt \right| \ll _{\tau, \ell, \sigma_0} e^{-\pi |m| \Im \tau}\]
		and 
		\[\left| \int_\R \frac{e^{-2\pi i n w}}{(w- z)^{1+s}(w- \bar z)^{1+s}} dw \right| \ll_{z, \sigma_0} e^{-\pi |n| y}\]
		for $-1/2 < \sigma_0 < \Re s$. Using these bounds as well as the bound on $K_{i\infty, \rho}(m,n;c)$ given in Lemma \ref{lem::weil_bound_general_kloostermansum} below, the sum over $(m,n)\ne (0,0)$ in (\ref{eqn::sigma_N_rho_(2)_partial_formula}) can be bounded against \[\ll_{z, \tau, \epsilon, \sigma_0}\sum_{c\ge 1} c^{-\frac32 + \epsilon - 2\sigma_0} \sum_{n, m\in \Z} \sqrt{|n|}e^{-\pi |n| y}e^{-\pi |m| \Im{\tau}} \]
		for any $\Re{s} > \sigma_0 > \frac{\epsilon}{2} -\frac14$ and for any $\epsilon>0$. Since the above series converges, the identity in (\ref{eqn::sigma_N_rho_(2)_partial_formula}) is valid, and moreover the sum over $(m,n)\ne (0,0)$ converges to an analytic function in $s$ in the region $\Re{s}>-\frac14$ for fixed $\tau$ and $z$. 
		
		All that remains is to analytically continue the $m=n=0$ term; this term is 
		\begin{equation}\label{eqn::fourierexp_sigma2_m=n=0}
			\left(\frac{1}{\ell^{2+2s}} \int_\R \frac{dt}{(\tau /\ell + t)^{2+s} ({\bar \tau }/\ell + t)^s}\right) \left(\int_\R \frac{y^{1+s}dw}{(w- z)^{1+s}(w- \bar z)^{1+s}}\right) \sum_{c\ge 1} \frac{\#\{\smalltwosqmat abcd \in \Gamma_\infty \bs \Gamma_0(N) L / \Gamma_\infty ^\ell \}}{c^{2+2s}}.
		\end{equation}
		Now, using Lemma \ref{lem::formula_for_K_infty,rho_0,0,c} below followed by the same calculation given in \cite[][Lemma 5.3]{construction_Maass_form}, we have \[\sum_{c\ge 1} \frac{\#\{\smalltwosqmat abcd \in \Gamma_\infty \bs \Gamma_0(N) L / \Gamma_\infty ^\ell \}}{c^{2+2s}} = \frac{A_1 \ell^{2+2s}}{N_1 \phi(\ell_2 A_1)} \frac{\zeta(2s+1)}{\zeta(2s+2)} \sum_{g|N/\gamma} \mu(g) \frac{\phi(g \gamma \ell)}{(g \gamma \ell)^{2+2s}} \prod_{p| g \gamma \ell} \frac{1}{1 - p^{-2 - 2s}}\]
		which gives a meromorphic continuation to $\C$.
		Using equation (3.10) from \cite{construction_Maass_form} for the term in the first bracket of equation (\ref{eqn::fourierexp_sigma2_m=n=0}), and recognizing that the integrand in the second bracket is just $\varphi_s(w,z)$ (\ref{eqn::fourierexp_sigma2_m=n=0}), the $m=n=0$ term is equal to 
		\[\begin{split}
			\left(- \ell^{-2-2s} \frac{\sqrt{\pi}}{1+s} \frac{\Gamma(\frac12 +s)}{ \Gamma(1+s)} s \left( \frac{\Im{\tau}}\ell \right)^{-1-2s} \right) \left( \int_\R \varphi_s(w,z) dw \right) &\frac{A_1 \ell^{2+2s}}{N_1 \phi(\ell_2 A_1)} \frac{\zeta(2s+1)}{\zeta(2s+2)} \\ \times  &\sum_{g|N/\gamma} \mu(g) \frac{\phi(g \gamma \ell)}{(g \gamma \ell)^{2+2s}} \prod_{p| g \gamma \ell} \frac{1}{1 - p^{-2 - 2s}}.
		\end{split}\]
		The lemma follows, noting that the simple pole at $s=0$ of $\zeta(2s+1)$ gets canceled by the factor of $s$ from the first term.
	\end{proof}
	
	In the above proof, we used a bound for the generalized Kloosterman sum $K_{i\infty, \rho}(m,n,c)$. Let us prove this bound in the following lemma.
	
	\begin{lemma}\label{lem::weil_bound_general_kloostermansum}
		For any $(m,n)\ne (0,0)$ and for any $\epsilon>0$ \[|K_{i \infty, \rho}(m,n;c)| \ll_{\ell, N, \epsilon} c^{1+\epsilon} \sqrt{|n|}.\]
	\end{lemma}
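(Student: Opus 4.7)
My plan is to reduce the generalized Kloosterman sum $K_{i\infty,\rho}(m,n;c)$ to a bounded number of classical Kloosterman sums, to which the Weil bound can then be applied directly.

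First, I would parametrize coset representatives of the double coset space $\Gamma_\infty \bs \Gamma_0(N) M_\rho / \Gamma_\infty^{\ell}$. Writing a general element as $\smalltwosqmat{a'}{b'}{c'}{d'} M_\rho$ for $\smalltwosqmat{a'}{b'}{c'}{d'} \in \Gamma_0(N)$, the bottom row of the product is $(c, d) = (c'\alpha + d'\gamma,\, c'\beta + d'\delta)$ with $N \mid c'$. The left action of $\Gamma_\infty$ and the right action of $\Gamma_\infty^{\ell}$ translate into shifts in $a$ and $d$ respectively, so (using $ad - bc = 1$) the representatives with a fixed $c$ can be identified with certain residues of $d$ modulo $\ell c$ (equivalently, $a$ modulo $c$) subject to congruence conditions forced by $N \mid c'$ and the structure of $M_\rho$.

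Using this parametrization, the phase $md/(\ell c) + na/c$ can be rewritten in terms of a residue $d^\sharp$ modulo some integer $c^\sharp \mid c\ell$, with $a \equiv (d^\sharp)^{-1} \pmod{c^\sharp}$ (times constants depending only on $M_\rho$). After a Chinese Remainder Theorem decomposition separating primes according to their relationship with $N$, $\gamma$, and $\ell$ (this is precisely where the factorization $\ell = \ell_1\ell_2$ from the preceding lemma comes into play), the sum becomes a bounded product of classical Kloosterman sums $K(\mu m, \nu n;\, c^\sharp)$, where $c^\sharp \mid c\ell$ and the multipliers $\mu, \nu$ are bounded in terms of $\ell, N$. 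Applying the classical Weil bound
\[ |K(m^*, n^*; c^*)| \le \tau(c^*) \sqrt{c^* \gcd(m^*, n^*, c^*)}, \]
together with the divisor bound $\tau(c^*) \ll_\epsilon (c^*)^\epsilon$ and the trivial estimate $\sqrt{\gcd(\mu m, \nu n, c^*)} \ll_N \sqrt{|n|}$, yields a bound of the form $|K_{i\infty,\rho}(m,n;c)| \ll_{\ell, N, \epsilon} c^{1/2+\epsilon}\sqrt{|n|}$, which is in fact stronger than the stated bound.

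The main obstacle will be the bookkeeping in the first two steps: explicitly writing down unambiguous coset representatives for $\Gamma_\infty \bs \Gamma_0(N) M_\rho / \Gamma_\infty^\ell$, carefully reducing the phase modulo the relevant integer, and then carrying out the CRT decomposition so that the dependence on the prime factors of $N$ (and on $\ell, \gamma$) can be isolated into $O_{\ell,N}(1)$ constants. Once the reduction to classical Kloosterman sums is in place, the Weil bound immediately gives what is needed; the real work lies in making this reduction explicit and uniform enough that the implied constants depend only on $\ell$ and $N$.
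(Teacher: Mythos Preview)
Your proposal is correct and matches the paper's approach: parametrize the double cosets by residues $d \pmod{\ell c}$ subject to congruences, split via CRT along the factorizations $\ell=\ell_1\ell_2$ and then $c=c_1c_2$, and reduce to (a short sum over $r \pmod{N_1}$ of) products of two classical Kloosterman sums, to which the Weil bound applies. Your observation that the method actually yields $c^{1/2+\epsilon}\sqrt{|n|}$ is correct---the paper records only the weaker $c^{1+\epsilon}$ bound, and the one extra ingredient you will need in the bookkeeping is detecting the residual congruence $d'\equiv (c/\gamma)\delta \pmod{N_1}$ via additive characters before the final CRT step.
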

	\begin{proof}
		Since $K_{i\infty, \rho}(m,n;c)$ is zero if $\gcd(N, c) \ne \gamma$, we may suppose that $\gcd(N, c) = \gamma$. Note that 
		\begin{equation}\label{eqn::kloosterman_sum_explicit_sum}
			K_{i\infty, \rho}(m,n;c) = \sum_{\begin{smallmatrix} d \modc{\ell c} \\ d\equiv (c/\gamma) \delta \modc{N/\gamma} \\ \gcd(d, c) = 1 \end{smallmatrix}} e\left( \frac{md}{\ell c} + \frac{n[d]_c}c \right)
		\end{equation}
		where for co-prime integers $x,m$ we define $[x]_m$ as any inverse of $x$ modulo $m$.
		
		Recall the definitions of $\ell_1$, $\ell_2$, and $N_1$ given above Lemma \ref{lem::analytic_cont_Sigma2_arb_s}. Notice that we have $\gcd(N/\gamma, \gamma) = \gcd(N_1, \gamma)$, and so $N_1 = \ell_1 \gcd(N_1, \gamma) | \ell_1 \gamma$. Thus $\gcd(N_1, \ell_2) = 1$ and $N_1|\ell_1 c$. Also, from $\ell|N$ and $\gcd(N, c) = \gamma$ it follows that $\gcd(c, \ell_2) = 1$. We can now use the Chinese remainder theorem to rewrite $d$ in equation (\ref{eqn::kloosterman_sum_explicit_sum}) as \[d \equiv \ell_2 [\ell_2]_{\ell_1 c} d' + \ell_1 c[\ell_1 c]_{\ell_2} d'' \modc{\ell c}\]
		where $d'$ ranges modulo $\ell_1 c$ and $d''$ ranges modulo $\ell_2$. The condition $\gcd(d,c)=1$ in equation (\ref{eqn::kloosterman_sum_explicit_sum}) is equivalent to $\gcd(d',c)=1$, while the other condition on $d$ is equivalent to \[d' \equiv \left( \tfrac{c}{\gamma} \right) \delta \modc{N_1} \quad \text{ and } \quad d'' \equiv \left( \tfrac{c}{\gamma} \right) \delta \modc{\ell_2}\]
		by the Chinese remainder theorem. Equation (\ref{eqn::kloosterman_sum_explicit_sum}) thus becomes 
		\begin{align*}
			K_{i\infty, \rho}(m,n;c) &= \sum_{\begin{smallmatrix} d' \modc{\ell_1 c}, \; \gcd(d', c)=1 \\ d' \equiv (c/\gamma) \delta \modc{N_1} \end{smallmatrix}} \sum_{\begin{smallmatrix}
			d'' \modc{\ell_2} \\ d'' \equiv (c/\gamma) \delta \modc{\ell_2} \end{smallmatrix} } e\left( \frac{m[\ell_2]_{\ell_1 c} d'}{\ell_1 c} + \frac{m[\ell_1 c]_{\ell_2} d''}{\ell_2} + \frac{n[d']_c}{c} \right)\\
			&= \frac{1}{N_1} e\left(\frac{m[\ell_1 c]_{\ell_2} (\frac c \gamma) \delta}{\ell_2} \right)\sum_{\begin{smallmatrix} d' \modc{\ell_1 c}\\ \gcd(d', c)=1 \end{smallmatrix}} \sum_{r \modc{N_1}} e\left( \frac{m[\ell_2]_{\ell_1 c} d'}{\ell_1 c} + \frac{n[d']_c}{c} + r \cdot \frac{d' - (\frac c\gamma) \delta}{N_1} \right)
		\end{align*}
		where the last equality follows from the fact that the sum of all $k$'th powers of $N_1$'th roots of unity is 0 if $N_1\nmid k$ and is $N_1$ otherwise. 
		Simplifying, we have 
		\begin{equation}\label{eqn::Kloosterman_sum_double_sum}
			K_{i\infty, \rho}(m,n;c) = \frac{1}{N_1} e\left(\frac{m[\ell_1 c]_{\ell_2} (\frac c \gamma) \delta}{\ell_2} \right)\sum_{r \modc{N_1}} e\left(-\frac{r (\frac c\gamma) \delta}{N_1} \right) \sum_{\begin{smallmatrix} d' \modc{\ell_1 c}\\ \gcd(d', c)=1 \end{smallmatrix}}  e\left( \frac{m'_r d'}{\ell_1 c} + \frac{n[d']_c}{c} \right)
		\end{equation}
		where $m'_r = m[\ell_2]_{\ell_1 c} + \frac{1}{N_1} \ell_1 c r$. 
		
		To handle the inner sum over $d'$, write $c = c_1c_2$ where $\mbb P(c_1) = \mbb P(c) \cap \mbb P(\ell_1)$, and where $\gcd(c_2, \ell_1) = 1 = \gcd(c_1, c_2)$. Using the Chinese remainder theorem to rewrite the sum over $d'$ as before, we get 
		\begin{align*}
			\sum_{\begin{smallmatrix} d' \modc{\ell_1 c}\\ \gcd(d', c)=1 \end{smallmatrix}}  e\left( \frac{m'_r d'}{\ell_1 c} + \frac{n[d']_c}{c} \right) &= \sum_{\begin{smallmatrix} d_1 \modc{\ell_1 c_1}, \; d_2 \modc{c_2}\\ \gcd(d_1, c_1)=1 = \gcd(d_2, c_2) \end{smallmatrix}} e\Bigg(\frac{m'_r[\ell_1 c_1]_{c_2} d_2}{c_2} + \frac{m'_r [c_2]_{\ell_1 c_1}d_1}{\ell_1 c_1} + \frac{n \ell_1 [\ell_1 c_1]_{c_2} [d_2]_{c_2}}{c_2} \\
			& \qquad \qquad \qquad \qquad \qquad \qquad \qquad + \frac{n [c_2]_{\ell_1 c_1} [d_1]_{c_1}}{c_1}\Bigg) \\
			&= K(m'_r[\ell_1 c_1]_{c_2}, n[\ell_1c_1]_{c_2}; c_2) \sum_{\begin{smallmatrix}d_1 \modc{\ell_1 c_1} \\ \gcd(d_1, c_1)=1 \end{smallmatrix} } e\left(\frac{m'_r [c_2]_{\ell_1 c_1}d_1}{\ell_1 c_1} + \frac{n [c_2]_{\ell_1 c_1} [d_1]_{c_1}}{c_1} \right).
		\end{align*}
		However, from $\gcd(c, N) = \gamma$ and $\mbb P(\ell_1) = \mbb P(\ell) \cap \mbb P(\gamma)$ we get that $\mbb P(c_1) = \mbb P(\ell_1)$. Thus $\gcd(d_1, c_1) = 1$ if and only if $\gcd(d_1, \ell_1 c_1) = 1$, and so by choosing $[d_1]_{c_1}$ appropriately, the second sum also becomes a Kloosterman sum. Thus
		\begin{equation}\label{eqn::gen_kloosterman_sum_inner_sum_d'}
			\sum_{\begin{smallmatrix} d' \modc{\ell_1 c}\\ \gcd(d', c)=1 \end{smallmatrix}}  e\left( \frac{m'_r d'}{\ell_1 c} + \frac{n[d']_c}{c} \right) = K(m'_r[\ell_1 c_1]_{c_2}, n[\ell_1c_1]_{c_2}; c_2) K(m'_r[c_2]_{\ell_1 c_1}, n[c_2]_{\ell_1c_1} \ell_1; \ell_1c_1).
		\end{equation}
		Using the Weil bound for Kloosterman sums then yields 
		\begin{align*}
			\left|\sum_{\begin{smallmatrix} d' \modc{\ell_1 c}\\ \gcd(d', c)=1 \end{smallmatrix}}  e\left( \frac{m'_r d'}{\ell_1 c} + \frac{n[d']_c}{c} \right) \right| &\le c_2 \tau(c_2) \sqrt{\gcd(m'_r[\ell_1 c_1]_{c_2}, n[\ell_1c_1]_{c_2}, c_2)} \ell_1 c_1 \tau(\ell_1 c_1) \sqrt{\gcd(m'_r[c_2]_{\ell_1 c_1}, n[c_2]_{\ell_1c_1} \ell_1, \ell_1c_1)} \\
			&= \ell_1 c \tau(\ell_1 c) \sqrt{\gcd(m'_r, n, \ell_1 c)} \le \ell_1 c \tau(\ell_1 c) \sqrt{|n|}.
		\end{align*}
		Using this bound in equation (\ref{eqn::Kloosterman_sum_double_sum}), and then using the bound $\tau(c) \ll_{\epsilon} c^{\epsilon}$ for any $\epsilon>0$, the required bound follows.
	\end{proof}
	
	We also required a general formula for $K_{i\infty, \rho}(0,0;c) = \#\{\smalltwosqmat abcd \in \Gamma_\infty \bs \Gamma_0(N) L / \Gamma_\infty ^\ell \}$, proven below. Recall the definitions of $\ell_1$, $\ell_2$, $N_1$, $A_1$, and $A_2$ given above Lemma \ref{lem::analytic_cont_Sigma2_arb_s}.
	
	\begin{lemma}\label{lem::formula_for_K_infty,rho_0,0,c}
		For any $c\in \N$, $\displaystyle K_{i\infty, \rho}(0,0,c) = \frac{A_2}{N_1} \phi\left(A_1 \frac{c}{\gamma} \right)$.
	\end{lemma}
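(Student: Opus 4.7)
The plan is to start from equation~(\ref{eqn::kloosterman_sum_explicit_sum}) of the previous lemma at $m=n=0$:
\[
K_{i\infty,\rho}(0,0;c) = \#\bigl\{d \pmod{\ell c} : d \equiv (c/\gamma)\delta \pmod{N/\gamma},\ \gcd(d,c) = 1\bigr\}.
\]
A first, elementary observation is that this count vanishes unless $\gcd(N,c) = \gamma$: otherwise some prime $p$ divides both $c/\gamma$ and $N/\gamma$, forcing $p \mid d$ in contradiction to $\gcd(d,c)=1$. So from now on we assume $\gcd(N,c)=\gamma$, which entails $\gamma \mid c$ and $\gcd(c/\gamma,N/\gamma)=1$.

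Next I would perform a CRT reduction based on $\ell = \ell_1\ell_2$. Since $\gcd(\ell_2,\gamma)=1$ by construction and $\gcd(\ell_2,c/\gamma)=1$ because $\ell_2 \mid N/\gamma$, we get $\gcd(\ell_2,\ell_1 c)=1$. Thus residues mod $\ell c$ split canonically as pairs (residue mod $\ell_1 c$, residue mod $\ell_2$); the $\ell_2$-part of the congruence pins down the second entry uniquely, while the coprimality $\gcd(d,c)=1$ depends only on the first. So we are reduced to counting
\[
N(c) := \#\bigl\{d' \pmod{\ell_1 c} : d' \equiv (c/\gamma)\delta \pmod{N_1},\ \gcd(d',c)=1\bigr\}.
\]

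The main work is evaluating $N(c)$. I would parameterize $d' = a + N_1 k$ with $a := (c/\gamma)\delta$ and $k$ ranging mod $\ell_1 c/N_1$ (using $N_1 \mid A_1 \mid \ell_1\gamma \mid \ell_1 c$), then analyze $\gcd(a+N_1 k,c)=1$ prime by prime. For primes $p \mid \gcd(c,N_1)$ one checks $p \nmid a$ automatically, using $p \mid N_1 \Rightarrow p \nmid c/\gamma$ (from $\gcd(c/\gamma,N/\gamma)=1$) and $p \mid N_1 \Rightarrow p \mid \gamma \Rightarrow p \nmid \delta$; these primes impose no condition on $k$. For primes $p \mid c$ coprime to $N_1$, the map $k \mapsto a+N_1 k$ is a bijection mod $p^{v_p(c)}$, so coprimality contributes the usual density $\phi(p^{v_p(c)})/p^{v_p(c)}$. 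Writing $c = c_* c^*$ with $\mbb P(c_*) = \mbb P(N_1)$ and $\gcd(c^*,N_1)=1$, a prime-by-prime count then yields $N(c) = (\ell_1 c_*/N_1)\,\phi(c^*)$.

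The final step is to identify these factors with $A_2/N_1$ and $\phi(A_1 c/\gamma)$ from the statement by unwinding the definitions: $A_1 A_2 = \ell_1\gamma$, $\mbb P(A_1) = \mbb P(N_1)$, $\mbb P(A_2) \cap \mbb P(N_1) = \emptyset$, together with the key identity $v_p(c) = v_p(\gamma)$ for every $p \in \mbb P(N_1)$ (forced by $\gcd(c,N)=\gamma$ and $\mbb P(N_1) \subseteq \mbb P(N/\gamma)$). The main obstacle will be precisely this final bookkeeping step---carefully matching the prime-support decompositions of $c_*$ and $c^*$ against those of $A_1$, $A_2$, and $c/\gamma$, since none of these factorizations are pairwise coprime in the naive sense. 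I would verify the formula on a small test case at the outset (e.g.\ $N=4$, $\gamma=2$, giving $\ell=1$, $N_1=A_1=2$, $A_2=1$), since a subscript swap between $A_1$ and $A_2$ is an easy pitfall.
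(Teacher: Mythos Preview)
Your approach is correct and genuinely different from the paper's. The paper does not count directly: it feeds $m=n=0$ into the Kloosterman-sum decomposition of equations~(\ref{eqn::Kloosterman_sum_double_sum}) and~(\ref{eqn::gen_kloosterman_sum_inner_sum_d'}), applies Kluyver's formula for Ramanujan sums twice, and then performs a chain of M\"obius manipulations (swapping sums, collapsing $\sum_{g'}\mu(g')$, showing $\gcd(\delta,N_1)=1$) to reach
\[
K_{i\infty,\rho}(0,0;c)=\frac{\ell_1 c}{N_1}\sum_{\substack{b\mid \ell_1 c\\ \gcd(b,N_1)=1}}\frac{\mu(b)}{b},
\]
before invoking $\sum_{d\mid n}\mu(d)/d=\phi(n)/n$. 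Your route---CRT to peel off the $\ell_2$ factor, then parametrize the progression mod $N_1$ inside $\Z/\ell_1 c\Z$ and count coprimality prime by prime---is considerably shorter and more transparent; it bypasses the Ramanujan-sum machinery entirely and makes the structure of the answer visible from the outset.

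One point you should be prepared for: your test case is well chosen and will in fact reveal that the lemma as stated has the subscripts swapped. Your computation (and the paper's own proof, once its final step is written out) yields
\[
K_{i\infty,\rho}(0,0;c)=\frac{A_1}{N_1}\,\phi\!\left(A_2\,\frac{c}{\gamma}\right),
\]
since $\ell_1 c_*=A_1$ and $c^*=A_2\,(c/\gamma)$ under the identifications you describe (using $v_p(c)=v_p(\gamma)$ for $p\in\mbb P(N_1)$ and $\mbb P(\ell_1)\subseteq\mbb P(N_1)$). For $N=4$, $\gamma=2$, $c=2$ the direct count gives $1$, matching $\tfrac{A_1}{N_1}\phi(A_2 c/\gamma)=\tfrac{2}{2}\phi(1)=1$, whereas the displayed formula $\tfrac{A_2}{N_1}\phi(A_1 c/\gamma)$ gives $\tfrac12$. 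So your bookkeeping step will not close against the statement as printed; this is a typo in the statement, not an error in your argument.
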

	\begin{proof}
		By equations (\ref{eqn::Kloosterman_sum_double_sum}) and (\ref{eqn::gen_kloosterman_sum_inner_sum_d'}), we have \[K_{i\infty, \rho}(0,0;c) = \frac1{N_1} \sum_{r \modc{N_1}} e\left(-\frac{rc\delta}{\gamma N_1} \right)K\left(\frac{\ell_1 c}{N}r[\ell_1c_1]_{c_2}, 0; c_2 \right) K\left(\frac{\ell_1 c}{N}r[c_2]_{\ell_1c_1}, 0; \ell_1c_1 \right)\]
		with $c = c_1c_2$ where $\gcd(c, \ell_1) = 1 = \gcd(c_1, c_2)$, and $\mbb P(c_1) = \mbb P(c) \cap \mbb P(\ell_1)$. 
		Since $K(m,0;c)$ is Ramanujan's sum of $m$'th powers of primitive $c$'th roots of unity, we use Kluyver's formula (c.f. \cite{kluyver_ramujansumformula}) to get \[K_{i\infty, \rho}(0,0;c) = \frac1{N_1} \sum_{r \modc{N_1}} e\left(-\frac{rc\delta}{\gamma N_1} \right) \sum_{u|c_2, u|\frac{\ell_1cr}{N_1} } \mu\left(\frac{c_2}u \right) u \sum_{v|\ell_1 c_1, v| \frac{\ell_1 c r}{N_1} }\mu \left( \frac{\ell_1 c_1}{v} \right) v.\]
		As $c_2$ and $\ell_1 c_1$ are co-prime, we have 
		\[K_{i\infty, \rho}(0,0;c) = \frac1{N_1} \sum_{r \modc{N_1}} e\left(-\frac{rc\delta}{\gamma N_1} \right) \sum_{u|c_2, v|\ell_1 c_1,  uv|\frac{\ell_1cr}{N_1} } \mu \left( \frac{\ell_1 c}{uv} \right) uv.\]
		Writing $w=uv$, simplifying the inner sum, and then swapping the two sums gives 
		\begin{align*}
			K_{i\infty, \rho}(0,0;c) &= \frac1{N_1} \sum_{r \modc{N_1}} e\left(-\frac{rc\delta}{\gamma N_1} \right) \sum_{w|\frac{\ell_1c}{N_1} \gcd(r, N_1)} \mu \left( \frac{\ell_1 c}{w} \right)w\\
			&= \frac{1}{N_1} \sum_{g|N_1} \sum_{w|\frac{\ell_1 c}{N_1/g} } \mu \left( \frac{\ell_1 c}{w} \right) w \sum_{r' \in (\Z/(N_1/g)\Z)^*} e\left(-\frac{r'(c/\gamma) \delta}{N_1/g} \right).
		\end{align*}
		Since $c/\gamma$ and $N_1$ are co-prime, we have 
		\begin{align*}
			K_{i\infty, \rho}(0,0;c) &= \frac{1}{N_1} \sum_{g|N_1} \sum_{w|\frac{\ell_1 c}{g} } \mu \left( \frac{\ell_1 c}{w} \right) w \sum_{r' \in (\Z/g\Z)^*} e\left(-\frac{r'\delta}{g} \right)\\
			&= \frac{1}{N_1} \sum_{g|N_1} \sum_{w|\frac{\ell_1 c}g} \sum_{a|\delta, a|g} \mu(gw) \frac{\ell_1 c}{gw} \mu\left( \frac ga\right) a,
		\end{align*}
		by Kluyver's formula again. Setting $b=wg$ gives 
		\begin{align*}
			K_{i\infty, \rho}(0,0;c) &= \frac{\ell_1c}{N_1} \sum_{b|\ell_1 c} \;\sum_{g|N_1, g|b} \;\sum_{a|\delta, a|g} \frac{\mu(b)}{b} \mu\left( \frac ga\right) a \\
			&= \frac{\ell_1c}{N_1} \sum_{b|\ell_1 c} \frac{\mu(b)}{b} \sum_{a|\gcd(\delta, b, N_1)} a\sum_{g'|\gcd(b,N_1)/a}   \mu\left(g'\right) \\
			&= \frac{\ell_1c}{N_1} \sum_{b|\ell_1 c} \frac{\mu(b)}{b} \sum_{a|\gcd(\delta, b, N_1)} a \cdot \begin{cases}
				1 & \text{if } \gcd(b,N_1) = a,\\
				0 & \text{otherwise,}
			\end{cases}\\
			&= \frac{\ell_1c}{N_1} \sum_{b|\ell_1 c, \;\gcd(b, N_1)|\delta } \frac{\mu(b)}{b} \gcd(b, N_1).
		\end{align*}
		Now, note that $\ell_1 \ell_2 = \ell = \frac{N/\gamma}{\gcd( N/\gamma, \gamma)} = \frac{\ell_2 N_1}{\gcd(N_1, \gamma)}$, and so $N_1 = \ell_1 \gcd(N_1, \gamma)$. Suppose $\delta$ and $N_1$ share a common prime factor $p$; as $\gamma$ and $\delta$ are co-prime and as $\gcd(N_1, \gamma)|\gamma$, it follows that $p|\ell_1$. However $\mbb P(\ell_1) \subseteq \mbb P(\gamma)$ by construction and so $p|\gamma$, a contradiction. Thus $\gcd(\delta, N_1)=1$ and so \[K_{i\infty, \rho}(0,0;c) = \frac{\ell_1c}{N_1} \sum_{b|\ell_1 c, \;\gcd(b, N_1)=1 } \frac{\mu(b)}{b}.\]
		Writing $\ell_1 \gamma = A_1A_2$ with $\gcd(A_2, N_1) = 1 = \gcd(A_1, A_2)$ and $\mbb P(A_1) \subseteq \mbb P(N_1)$, and using the fact that $\sum_{d|n} \frac{\mu(d)}d = \frac{\phi(n)}{n}$ yields the required result. 
	\end{proof}
	
	Having now established the analytic continuation for any $\Re s> -\frac14$, we explicitly evaluate at $s=0$ to get the following expansion.
	\begin{lemma}
		The analytic continuation at $s=0$ of $\Sigma^{(2)}_{N, \rho}(s)$ is explicitly given by \[\frac{-3}{[SL_2(\Z): \Gamma_0(N)]\Im{\tau}} - \frac{4\pi^3}{\ell^2} \sum_{m\ge 1} m \left(a_\rho(m, 0) + \sum_{n\ge 1} \Big(a_\rho(m,-n)e(nz) + a_\rho(m,n)e(-n\bar z) \Big) \right) e^{2\pi i m \tau / \ell}\]
		where \[a_\rho(m,n) := \sum_{c\ge 1} \frac{K_{i\infty, \rho}(m,n;c) }{c^{2}}.\]
	\end{lemma}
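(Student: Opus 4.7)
The plan is to substitute $s=0$ directly into the formula provided by Lemma \ref{lem::analytic_cont_Sigma2_arb_s}, treating the $(m,n)\ne(0,0)$ series and the ``constant'' first term separately. For the series, the key observation is that both inner integrals can be evaluated in closed form at $s=0$ via contour integration. For the $t$-integral, since $\tau/\ell\in\mbb H$ and $\bar\tau/\ell\in-\mbb H$, the factor $(\bar\tau/\ell+t)^s$ collapses to $1$ at $s=0$, and closing the contour gives
\[\frac{1}{\ell^2}\int_\R \frac{e^{-2\pi i m t}}{(\tau/\ell+t)^2}\,dt = \begin{cases} -\dfrac{4\pi^2 m}{\ell^2}e^{2\pi i m\tau/\ell} & m>0,\\ 0 & m\le 0.\end{cases}\]
This immediately kills every $m\le 0$ term, including all $(0,n)$ with $n\ne 0$, so the surviving double sum is indexed by $m\ge 1$, $n\in\Z$. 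For the $w$-integral at $s=0$, I would apply Lemma \ref{lem::compute_g_n} with $w_1=-z\in-\mbb H$ and $w_2=-\bar z\in\mbb H$, obtaining
\[\int_\R \frac{e^{-2\pi i n w}}{(w-z)(w-\bar z)}\,dw \;=\; \frac{\pi}{y}\cdot\begin{cases} e^{2\pi i (-n) z} & n\le 0,\\ e^{-2\pi i n \bar z} & n>0.\end{cases}\]
Multiplying these against the $(\Im z)^{1+s}\to y$ prefactor cancels the $1/y$, and splitting the $n$-sum at $n=0$ produces exactly
\[-\frac{4\pi^3}{\ell^2}\sum_{m\ge 1} m\!\left(a_\rho(m,0)+\sum_{n\ge 1}\bigl(a_\rho(m,-n)e(nz)+a_\rho(m,n)e(-n\bar z)\bigr)\right)e^{2\pi i m\tau/\ell}.\]

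The constant term requires more care. Using $\Gamma(\tfrac12)=\sqrt\pi$ and the standard Laurent expansion $\zeta(2s+1)=\tfrac{1}{2s}+O(1)$, I would evaluate
\[\lim_{s\to 0}\frac{\sqrt\pi}{1+s}\frac{\Gamma(\tfrac12+s)}{\Gamma(1+s)}=\pi,\qquad \lim_{s\to 0}\frac{s\zeta(2s+1)}{\zeta(2s+2)}=\frac{1/2}{\pi^2/6}=\frac{3}{\pi^2},\]
and observe that $\int_\R\varphi_0(w,z)\,dw=\int_\R \tfrac{y\,dw}{(w-x)^2+y^2}=\pi$. Combining all factors at $s=0$, the constant term reduces to
\[\frac{-3\ell}{\Im\tau}\cdot\frac{A_1}{N_1\phi(\ell_2 A_1)}\sum_{g\mid N/\gamma}\mu(g)\frac{\phi(g\gamma\ell)}{(g\gamma\ell)^2}\prod_{p\mid g\gamma\ell}\frac{1}{1-p^{-2}}.\]

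The main obstacle is then the purely arithmetic identity
\[\frac{\ell\,A_1}{N_1\phi(\ell_2 A_1)}\sum_{g\mid N/\gamma}\mu(g)\frac{\phi(g\gamma\ell)}{(g\gamma\ell)^2}\prod_{p\mid g\gamma\ell}\frac{1}{1-p^{-2}}=\frac{1}{[SL_2(\Z):\Gamma_0(N)]}=\frac{1}{N}\prod_{p\mid N}\frac{p}{p+1},\]
which needs to hold to recover the claimed $-3/([SL_2(\Z):\Gamma_0(N)]\Im\tau)$. I would prove this by multiplicativity: the right side factors over primes $p\mid N$, and on the left side the divisors $g\mid N/\gamma$ that contribute nontrivially after the M\"obius convolution are squarefree with $\mbb P(g)\subseteq \mbb P(N/\gamma)$. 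Using the decomposition $\ell_1\gamma=A_1A_2$ and $N/\gamma=N_1\ell_2$ together with $\gcd(N_1,\ell_2)=1$ and $\mbb P(A_1)\subseteq\mbb P(N_1)$, the sum separates into local factors indexed by primes dividing $N/\gamma$, primes dividing $\gamma$ but not $N/\gamma$, and primes dividing $\ell_2$; a direct prime-by-prime calculation (comparing the local Euler factor arising from $\mu(g)\phi/(g\gamma\ell)^2\prod(1-p^{-2})^{-1}$ against $p/(p+1)$) then yields the identity. With this combinatorial identity in hand, the two terms assemble into the stated expression.
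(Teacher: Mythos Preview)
Your proposal is correct and follows essentially the same approach as the paper: both evaluate the two integrals at $s=0$ via Lemma~\ref{lem::compute_g_n}, compute the same limits $\Gamma(\tfrac12)=\sqrt\pi$, $s\zeta(2s+1)\to\tfrac12$, $\zeta(2)=\pi^2/6$, $\int_\R\varphi_0=\pi$ for the constant-term prefactors, and reduce to the identical arithmetic identity. The paper proves that identity by exactly the multiplicative prime-by-prime strategy you sketch, splitting $g\mid N/\gamma=N_1\ell_2$ as $g=g_1g_2$ with $g_1\mid N_1$, $g_2\mid\ell_2$ and then simplifying each factor against the $A_1$, $A_2$ decomposition; your partition of primes is slightly garbled (primes dividing $\ell_2$ already divide $N/\gamma$), but the intended factorization is the same.
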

	\begin{proof}
		In the second term given in Lemma \ref{lem::analytic_cont_Sigma2_arb_s}, setting $s=0$ gives
		\[\sum_{(m,n)\in \Z^2\bs\{(0,0)\}} \Im z \left(\frac{1}{\ell^{2}}\int_\R \frac{e^{-2\pi i m t}}{(\frac \tau \ell + t)^{2}} dt \right) \left(\int_\R \frac{e^{-2\pi i n w}}{(w- z)(w- \bar z)} dw\right) \sum_{c\ge 1} \frac{K_{i\infty, \rho}(m,n;c) }{c^{2}}\]
		which can be written as \[\frac y{\ell^2} \sum_{(m,n)\in \Z^2\bs\{(0,0)\}} g_m\left(\frac \tau \ell, \frac \tau \ell \right) g_n(-z, -\bar z) a_\rho(m,n).\]
		Lemma \ref{lem::compute_g_n} then yields \[- \frac{4\pi^3}{\ell^2} \sum_{m\ge 1} m \left(a_\rho(m, 0) + \sum_{n\ge 1} \Big(a_\rho(m,-n)e(nz) + a_\rho(m,n)e(-n\bar z) \Big) \right) e^{2\pi i m \tau / \ell}.\]
		Finally, using the fact that $\Gamma(\frac12) = \sqrt \pi$, $\zeta(2) = \frac{\pi^2}6$, $\int_\R \varphi_0(w,z)dw = \Im z g_0(-z, -\bar z) = \pi$, and that $s\zeta(2s+1)$ converges to $\frac12$ as $s\to 0$, the first term is \[\frac{3\ell}{\Im{\tau}} \cdot \frac{A_1}{N_1 \phi(\ell_2 A_1)} \sum_{g|N/\gamma} \mu(g) \frac{\phi(g \gamma \ell)}{(g \gamma \ell)^2} \prod_{p| g \gamma \ell} \frac{1}{1 - p^{-2}}.\]
		
		Since \(\displaystyle \phi(g \gamma \ell) = g\gamma \ell \prod_{p|g \gamma \ell} \left(1 - p^{-1} \right)\),
		the first term becomes \[\frac{3}{\Im{\tau}} \cdot \frac{A_1}{N_1 \phi(\ell_2 A_1)} \sum_{g|N/\gamma}  \frac{\mu(g)}{g \gamma} \prod_{p| g \gamma \ell} \frac{1}{1 + p^{-1}} = \frac{3}{\Im{\tau} N} \cdot \frac{\ell_2A_1}{\phi(\ell_2 A_1)} \sum_{g|N_1 \ell_2}  \frac{\mu(g)}{g} \prod_{p|g\gamma \ell_1 \ell_2}\frac1{1+p^{-1}},\]
		where use the fact that $N = \gamma N_1 \ell_2$. 
		As $\gcd(N_1, \ell_2) = 1$, we can then write the sum over $g$ as \[\sum_{g_1|N_1, g_2|\ell_2}  \frac{\mu(g_1) \mu(g_2)}{g_1g_2} \prod_{p|g_1g_2A_1A_2 \ell_2} \frac1{1+p^{-1}}.\] 
		From the fact that $\gcd(A_2, N_1) = 1 = \gcd(A_2, A_1)$ and that $\gcd(\ell_2, A_1 A_2) = 1$ (as $\ell_2$ is co-prime to $\ell_1$ and $\gamma$), the above sum is \[\left( \prod_{p|A_2 } \frac1{1+p^{-1}}\right) \left( \sum_{g_1|N_1} \frac{\mu(g_1) }{g_1} \prod_{p|g_1A_1} \frac1{1+p^{-1}} \right) \left(\sum_{g_2|\ell_2} \frac{\mu(g_2)}{g_2} \prod_{p|g_2\ell_2} \frac1{1+p^{-1}}\right) .\]
		As $g_2|\ell_2$ anyway, we have \[\sum_{g_2|\ell_2} \frac{\mu(g_2)}{g_2} \prod_{p|g_2\ell_2} \frac1{1+p^{-1}} = \sum_{g_2|\ell_2} \frac{\mu(g_2)}{g_2} \prod_{p|\ell_2} \frac1{1+p^{-1}} = \frac{\phi(\ell_2)}{\ell_2} \prod_{p|\ell_2} \frac{1}{1+p^{-1}}.\]
		For the sum over $g_1$, using the fact that $\mu(n) = 0$ if $n$ is not square-free, we have
		\begin{align*}
			\sum_{g_1|N_1} \frac{\mu(g_1) }{g_1} \prod_{p|g_1A_1} \frac1{1+p^{-1}} &= \sum_{S \subseteq \mbb P(N_1)} \frac{(-1)^{|S|} }{\prod_{p\in S} p} \prod_{p\in S \cup \mbb P(A_1)} \frac{1}{1+p^{-1}} \\
			&= \left(\sum_{S_1 \subseteq \mbb P(N_1) \bs \mbb P(A_1)} (-1)^{|S_1|}  \prod_{p\in S_1}\frac{1}{p+1} \right) \left( \sum_{S_2 \subseteq \mbb P(A_1)} \frac{(-1)^{|S_2|}}{\prod_{p \in S_2} p} \prod_{p\in \mbb P(A_1)} \frac{1}{1+p^{-1}} \right) \\
			&= \left( \frac{\phi(A_1)}{A_1} \prod_{p|A_1} \frac{1}{1+p^{-1}} \right) \prod_{p|N_1, p\nmid A_1} \left(1 - \frac{1}{p+1} \right) = \frac{\phi(A_1)}{A_1} \prod_{p|N_1} \frac1{1+p^{-1}}.
		\end{align*}
	
		Hence the first term is \[\frac3{\Im \tau N} \cdot \frac{\ell_2 A_1}{\phi(\ell_2 A_1)} \frac{\phi(\ell_2)}{\ell_2}  \frac{\phi(A_1)}{A_1} \left( \prod_{p|A_2} \frac{1}{1+p^{-1}}\right) \left(\prod_{p|\ell_2} \frac{1}{1+p^{-1}} \right)\left(\prod_{p|N_1} \frac{1}{1+p^{-1}} \right) \]
		and since $\mbb P(A_2) = \mbb P(\ell_1 \gamma) \bs \mbb P(N_1)$, $\mbb P(\ell_1)\subseteq \mbb P(\gamma)$, and $N_1 \ell_2 = N/\gamma$, this simplifies to \[ \frac{3}{\Im \tau N} \prod_{p|A_2N/\gamma} \frac{1}{1+p^{-1}} = \frac{3}{\Im \tau N} \prod_{p|N} \frac{1}{1+p^{-1}}\]
		as required.
	\end{proof}
	
	\subsection{Analytically continuing $\Sigma_{N, \rho}^{(1)}(s)$ to $s=0$}
	The sum $\Sigma_{N, \rho}^{(1)}(s)$ converges absolutely and uniformly for $s$ in rectangles of the form $-\frac12 < \sigma_0 \le \Re{s} \le \sigma_1$ and $|\Im{s}| \le R$. This follows exactly as in Lemma 3.3 of \cite{construction_Maass_form}. We now need to evaluate $\Sigma_{N, \rho}^{(1)}(s)$ at $s=0$. 
	\begin{lemma}
		If $\Im{\tau} > \max\{y, 1/y\}$,
		\[\begin{split}
			\Sigma^{(1)}_{N, \rho}(0) ={}& \frac{4\pi ^3}{\ell^2} \sum_{m\ge 1} m \sum_{n\ge 1}\left( a_\rho (m, n) e(-n \bar z) + a_\rho (m, -n) e(nz)\right) e^{2\pi i m \tau / \ell} \\
			&\quad + \frac{2\pi^2}{\ell^{3/2}} \sum_{m\ge 1} \sqrt m \left(\sum_{n,c \ge 1} \frac{e^{-2\pi i n \bar z} K_{i \infty, \rho} (m, n; c)}{c \sqrt n} J_1 \left( \frac{4\pi}{c} \sqrt{\frac{mn}{\ell}} \right) \right) e^{2\pi i m \tau / \ell}\\
			&\quad + \frac{2\pi^2}{\ell^{3/2}} \sum_{m\ge 1} \sqrt m \left(\sum_{n,c \ge 1} \frac{e^{2\pi i n z} K_{i \infty, \rho} (m, -n; c)}{c \sqrt n} I_1 \left( \frac{4\pi}{c} \sqrt{\frac{mn}{\ell}} \right) \right) e^{2\pi i m \tau / \ell}
		\end{split}\]
		where \[a_\rho(m,n) = \sum_{c\ge 1} \frac{K_{i\infty, \rho}(m,n;c)}{c^2}.\]
	\end{lemma}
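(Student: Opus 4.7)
The plan is to mirror the coset decomposition and double Poisson summation used to analyze $\Sigma^{(2)}_{N,\rho}(s)$, adapting the analogous $i\infty$-cusp computation from \cite{construction_Maass_form} to an arbitrary cusp. Every $M\in\Gamma_0(N)L$ with $c\ge 1$ decomposes uniquely as $M=\smalltwosqmat{1}{n}{0}{1} M' \smalltwosqmat{1}{\ell m}{0}{1}$, with $n,m\in\Z$ and $M'=\smalltwosqmat{a}{b}{c}{d}$ ranging over $\Gamma_\infty\bs\Gamma_0(N)L/\Gamma_\infty^\ell$. Poisson summation in $n$ converts the $z$-dependence into an integral over $w\in\R$, Poisson summation in $m$ does the same for the $\tau$-variable, and the sum over $M'$ for fixed $c$ collapses to the generalized Kloosterman sum $K_{i\infty,\rho}(m,n;c)$ together with phases $e(dm/\ell c)$ and $e(an/c)$ that are absorbed into it.

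Since the subtraction term $\varphi_s(a/c, z)$ is independent of $\tau$, after Poisson summation it produces exactly the same product of integrals as in Lemma \ref{lem::analytic_cont_Sigma2_arb_s}, namely
\[
\frac{y^{1+s}}{\ell^{2+2s}} \int_\R \frac{e^{-2\pi i m t}}{(\tau/\ell + t)^{2+s}(\bar\tau/\ell + t)^s}\, dt \;\cdot\; \int_\R \frac{e^{-2\pi i n w}}{(w-z)^{1+s}(w-\bar z)^{1+s}}\, dw.
\]
At $s=0$ the $w$-integral is $g_n(-z,-\bar z)$ (vanishing for $n=0$) and the $t$-integral is $\ell^{-2}g_m(\tau/\ell, \tau/\ell)$ (vanishing for $m\le 0$). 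Applying Lemma \ref{lem::compute_g_n}, the cases $n>0$ and $n<0$ produce the factors $e(-n\bar z)$ and $e(nz)$ respectively, paired with $-4\pi^2 m e^{2\pi i m\tau/\ell}/\ell^2$ in the $\tau$-variable. Since the subtraction enters $\Sigma^{(1)}_{N,\rho}$ with a minus sign, we obtain precisely the first displayed double sum in the statement; note that this is the negative of the corresponding contribution in $\Sigma^{(2)}_{N,\rho}(0)$, which yields the cancellation required to obtain Proposition \ref{prop::fourier_at_arb_cusps}.

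For the $\varphi_s(M\tau, z)$ piece, the Poisson-summed integral factors into the same $w$-integral together with a different $t$-integral arising from writing $M\tau = a/c - 1/[c j(M,\tau)]$ and expanding. After the change of variable removing the $d/(\ell c)$ offset and a rescaling that introduces the factor $1/c$, the resulting double integral is the classical Petersson integral representation of the $J_1$- and $I_1$-Bessel functions, exactly as appears in the $\Sigma_2$ computation of Subsection \ref{sctn::prelim_HNz} (cf. \eqref{eqn::infty_sigma2_full_expression}). Under the hypothesis $\Im{\tau}>\max\{y,1/y\}$, evaluation at $s=0$ yields, after combining with $K_{i\infty,\rho}(m,n;c)$, the $J_1$-sum (for $n\ge 1$, contributing $e^{-2\pi i n \bar z}$) and the $I_1$-sum (for $n\le -1$, contributing $e^{2\pi i n z}$ after reindexing $n\mapsto -n$), with the prefactors $2\pi^2/\ell^{3/2}$ and arguments $4\pi\sqrt{mn/\ell}/c$ exactly as stated.

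The main technical obstacle is the rigorous passage to $s=0$, especially for the $I_1$-sum, whose term-by-term absolute estimate requires $\Im{\tau}>1/y$. Uniform absolute convergence of $\Sigma^{(1)}_{N,\rho}(s)$ on strips $-1/2<\sigma_0\le \Re{s}\le\sigma_1$ is already established as in Lemma 3.3 of \cite{construction_Maass_form}, and this together with the standard Weil-type bound of Lemma \ref{lem::weil_bound_general_kloostermansum} and asymptotic estimates on $I_1$ and $J_1$ permits interchanging the $s\to 0$ limit with the $(m,n,c)$-sums. Assembling the three contributions then produces the displayed formula.
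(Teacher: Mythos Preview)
Your approach is essentially the paper's: coset decomposition, double Poisson summation, and evaluation of the resulting integrals via Lemma \ref{lem::compute_g_n} together with a Bessel-function identity. One inaccuracy should be flagged: you assert that the $w$-integral $g_n(-z,-\bar z)$ vanishes for $n=0$, but by Lemma \ref{lem::compute_g_n} (case $n\le 0$, $w_1=-z\in -\mbb H$) one has $g_0(-z,-\bar z)=2\pi i/(z-\bar z)=\pi/y\ne 0$. In the paper the $n=0$ contribution from the subtraction term does not vanish; instead it is cancelled by the $n=0$ contribution from the main $\varphi_0(M\tau,z)$ term. Concretely, the paper packages the main term (after shifting the $t$-contour) as an integral $I_{m,n,c}(v/\ell)$ and computes $I_{m,0,c}(v/\ell)=-4\pi^2 m$, exactly the negative of the $+4\pi^2 m$ arising from the subtraction piece; this is why the final formula has no $n=0$ term. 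Your sketch jumps directly to the Bessel representation of the main term for $n\ne 0$ and so omits the $n=0$ case on both sides; once you correct the $g_0$ claim you must record this cancellation explicitly. Apart from this point, and the fact that the paper carries out the Bessel step in detail via an inverse Laplace transform of $t^{-2}e^{\mp\kappa/t}$ rather than by citation, the two arguments coincide.
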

	\begin{proof}
		Using Poisson summation twice,
		\begin{align*}
			\Sigma_{N, \rho}^{(1)}(0) &= \sum_{c\ge 1} \sum_{\smalltwosqmat abcd \in \Gamma_\infty \bs \Gamma_0(N) L / \Gamma_\infty ^\ell } \sum_{m\in \Z} \sum_{n\in \Z} \frac{1}{j(\smalltwosqmat{1}{n}{0}{1} \smalltwosqmat{a} bcd \smalltwosqmat 1{\ell m}01, \tau)^2}\left( \varphi_0\left(n + \smalltwosqmat abcd (\tau + \ell m), z \right) - \varphi_0\left(n + \tfrac ac, z \right) \right) \\
			&= y\sum_{c\ge 1} \sum_{\smalltwosqmat abcd \in \Gamma_\infty \bs \Gamma_0(N) L / \Gamma_\infty ^\ell } \sum_{m\in \Z} \sum_{n\in \Z} \frac{1}{ c^2 (\tau + \frac dc + \ell m)^2} \Biggl( \frac{1}{\left(n + \frac ac - \frac{1}{c^2 \ell (\frac \tau\ell + m + \frac{d}{ c\ell})} - z\right) \left(n + \frac ac - \frac{1}{c^2 \ell (\frac \tau\ell + m + \frac{d}{ c\ell})} - \bar z\right)} \\
			& \qquad \qquad - \frac1{(\frac ac + n - z) (\frac ac + n - \bar z)} \Biggl) \\
			&= y\sum_{c\ge 1} \sum_{\smalltwosqmat abcd \in \Gamma_\infty \bs \Gamma_0(N) L / \Gamma_\infty ^\ell } \sum_{m,n\in \Z} \Biggl( \int_\R \frac{e^{-2\pi i m t}}{ c^2 (\tau + \frac dc + \ell t)^2} \int_\R \frac{e^{-2\pi i n w} dw dt}{\left(w + \frac ac - \frac{1}{c^2 \ell (\frac \tau\ell + t + \frac{d}{ c\ell})} - z\right) \left(w + \frac ac - \frac{1}{c^2 \ell (\frac \tau\ell + t + \frac{d}{ c\ell})} - \bar z\right)} \\
			& \qquad \qquad - \int_\R \frac{e^{-2\pi i m t}dt}{ c^2 (\tau + \frac dc + \ell t)^2} \int_\R\frac {e^{-2\pi i n w} dw}{(\frac ac + w - z) (\frac ac + w - \bar z)} \Biggl). \\
		\end{align*}
		The second term is just \[\frac1{c^2 \ell^2} g_m\left(\frac \tau \ell + \frac{d}{c\ell}, \frac \tau \ell + \frac{d}{c\ell} \right) g_n\left(\frac ac - z, \frac ac - \bar z \right) = \begin{cases}
			0 & \text{if } m \le 0, \\
			-\frac{4\pi ^2 m e(md / \ell c)}{c^2 \ell^2} e^{2\pi i m \tau / \ell} \cdot \frac{\pi}{y} e(na/c) e(-nx)e^{-2\pi |n| y} & \text{if } m > 0,
		\end{cases}\]
		using Lemma \ref{lem::compute_g_n}. The first double integral can be rewritten using the change of variables $t\mapsto t - \frac d{c\ell} $, $w\mapsto w - \frac ac$, to get \[ \frac{e(md/c\ell)}{c^2 \ell^2} \int_\R \frac{e^{-2\pi i m t} }{(t + \frac \tau \ell)^2} \int_\R \frac{e(na/c) e^{-2\pi i n w}} {\left(w - \frac{1}{c^2 \ell (\frac \tau\ell + t)} - z\right) \left(w - \frac{1}{c^2 \ell (\frac \tau\ell + t)} - \bar z\right) }dwdt.\]
		Let $\tau = u+iv$; then using $t\mapsto t - \frac \tau \ell$, the double integral becomes \[\frac{e\left( \frac{md}{c\ell} + \frac{na} c\right) e^{2\pi i m \tau / \ell}}{c^2 \ell^2} \int_{\R + i \frac v\ell} \frac{e^{-2\pi i m t} }{t^2} \int_\R \frac{e^{-2\pi i n w}} {\left(w - \frac{1}{c^2 \ell t} - z\right) \left(w - \frac{1}{c^2 \ell t} - \bar z\right) }dwdt.\]
		The inner integral is just $g_n(-z - \frac{1}{c^2 \ell t}, -\bar z - \frac1{c^2 \ell t})$, and using the conditions on $v$ we have \[g_n \left( -z - \frac{1}{c^2 \ell t}, -\bar z - \frac1{c^2 \ell t} \right) = \frac \pi y e^{-2\pi i n / c^2 \ell t} e(-nx) e^{-2\pi |n| y}.\]
		Thus the double integral reduces to \[\frac{e\left( \frac{md}{c\ell} + \frac{na} c\right) e^{2\pi i m \tau / \ell}}{c^2 \ell^2} \cdot \frac \pi y \cdot e(-nx) e^{-2\pi |n| y}\int_{\R + i \frac v\ell} \frac{\exp(-2\pi i (m t + \frac{n}{c^2 \ell t})) }{t^2} dt.\]
		Let \[I_{m,n,c}(\xi) = \int_{\R + i \xi} \frac{\exp(-2\pi i (m t + \frac{n}{c^2 \ell t})) }{t^2} dt.\]
		\begin{itemize}
			\item For $n\in \Z$ and $m\le 0$, notice that for $\Im{t} = \xi$ we get \[\left|\exp\left( -2\pi i \left(m t + \frac{n}{c^2 \ell t} \right)\right) \right| = \exp\left( 2\pi \left( m\xi - \frac{n \xi}{c^2 \ell |t|^2}\right) \right) \le \exp\left( 2\pi \left( m\xi +\frac{|n|}{c^2 \ell \xi}\right) \right)\]
			which for $m<0$ goes to 0 as $\xi \to \infty$. For $m=0$, we have \[|I_{m,n,c}(\xi)| \le \exp\left(\frac{2\pi |n|}{c^2 \ell \xi} \right)\int_{\R+i\xi} \frac{dt}{t^2} \to 0\]
			as $\xi \to \infty$. Thus			
			\[\lim_{\xi \to \infty} I_{m,n,c}(\xi) = 0\]
			for $m\le 0$. Since the integrand is analytic for $\Re{t} > 0$, by Cauchy's Theorem we have $I_{m,n,c} (v/\ell) = I_{m,n,c}(\xi)$ for all $\xi > 0$, and so $I_{m,n,c} (v / \ell)=0$. 
			
			\item For $n = 0$ and $m\in \N$, we have by Lemma \ref{lem::compute_g_n}
			\begin{align*}
				I_{m,0,c}(v/\ell) &= \int_\R \frac{e^{-2 \pi i m t} e^{2\pi m v/ \ell}}{(t + iv/\ell)^2} dt = e^{2\pi m v/ \ell} g_m(i v/ \ell, i v / \ell) = - 4\pi ^ 2 m.
			\end{align*}
			
			\item For $n\ne 0$ and $m\in \N$, using the change of variables $t \mapsto i \ell^{-1/2} |n|^{1/2} m^{-1/2} c^{-1} t$ and then using Cauchy's Theorem to shift the line of integration to $\xi + i\R$ for $\xi>0$, we have 
			\begin{align*}
				I_{m,n,c}(v/\ell) &= \int_{cv \sqrt{m / \ell|n|} + i\R} \frac{\exp \left(\frac{2\pi }{c} \sqrt{ \frac{m|n|}{\ell}} (t - \mr{sgn}(n) / t)\right)}{c^{-1} i \sqrt{|n|/m\ell} \cdot t^2} dt \\
				&= \frac{c\sqrt{\ell m/|n|}}{i} \int_{\xi + i\R} \frac{\exp \left(\frac{2\pi }{c} \sqrt{ \frac{m|n|}{\ell}} (t - \mr{sgn}(n) / t)\right)}{t^2} dt \\
				&= \frac{c\sqrt{\ell m/|n|}}{i}\cdot 2\pi i \mc{L}^{-1} \left( \frac{\exp \left(- \mr{sgn}(n) \frac{2\pi }{c} \sqrt{ \frac{m|n|}{\ell}}\cdot \frac1t\right)}{t^2}\right)\left( \frac{2\pi }{c} \sqrt{ \frac{m|n|}{\ell}} \right) \\
				&= 2\pi c \sqrt{ \frac{\ell m}{|n|}} \cdot \begin{cases}
					J_1\left(2\cdot \frac{2\pi }{c} \sqrt{ \frac{m|n|}{\ell}} \right) & \text{if } n>0,\\
					I_1\left(2\cdot \frac{2\pi }{c} \sqrt{ \frac{m|n|}{\ell}} \right) & \text{if } n<0,
				\end{cases}
			\end{align*}
			where we use the Mellin integral for the inverse Laplace transform of $t^{-2} e^{-\mr{sgn}(n)\kappa / t}$ \cite[c.f.][{}29.3.80-81]{handbook_fns}.
		\end{itemize} 
		Therefore 
		\begin{align*}
			\Sigma_{N, \rho}^{(1)}(0) 
			&= \frac{\pi}{\ell^2}\sum_{c\ge 1} \sum_{\smalltwosqmat abcd \in \Gamma_\infty \bs \Gamma_0(N) L / \Gamma_\infty ^\ell } \sum_{n\in \Z} \sum_{m\ge 1} \frac{1}{c^2} e\left( \frac{md}{c\ell} + \frac{na} c\right)\left(I_{m,n,c}(v/\ell) + 4\pi ^2 m \right) e(-nx) e^{-2\pi |n| y} e^{2\pi i m \tau / \ell} \\
			&= \frac{\pi}{\ell^2}\sum_{m\ge 1} \Bigg(\sum_{n\ge 1} \sum_{c\ge 1} \frac{K_{i\infty, \rho}(m,n;c)}{c^2}\left( 2\pi c \sqrt{ \frac{\ell m}{n}} J_1\left( \frac{4\pi }{c} \sqrt{ \frac{mn}{\ell}}\right) + 4\pi ^2 m \right) e^{-2\pi in \bar z} \\
			&\qquad \qquad \quad + \sum_{n\ge 1} \sum_{c\ge 1} \frac{K_{i\infty, \rho}(m,-n;c)}{c^2}\left( 2\pi c \sqrt{ \frac{\ell m}{n}} I_1\left( \frac{4\pi }{c} \sqrt{  \frac{mn}{\ell}}\right) + 4\pi ^2 m \right) e^{2\pi i n z}  \Bigg)e^{2\pi i m \tau / \ell}
		\end{align*} 
		which yields the desired result.
	\end{proof}
	\section{Some Useful Lemmas, and Proofs of Propositions \ref{prop::mellin_transform_z_converges_infty_only} and \ref{prop::mellin_transform_z_converges}}\label{sctn::useful_lemmas}
	In this section, we first prove a functional equation relating $H_{N,-1/Nz}^*$ and $H_{N,z}^*$. Then, we analyze certain integrals of the various pieces of the Fourier expansion, and in doing so we deduce Propositions \ref{prop::mellin_transform_z_converges_infty_only} and \ref{prop::mellin_transform_z_converges} directly. 
	
	\subsection{A Functional Equation}
	Let us first relate $H_{N, -1/Nz}^*$ with $H_{N,z}^*$.
	\begin{lemma}\label{lem::func_eqn_H_N,z_and_H_N,WNz}
		Let $W_N = \smalltwosqmat{0}{-1}{N}{0}$ be the Fricke involution. For all $z\in \mbb H$, we have $ H_{N, W_Nz}^* =  H_{N,z}^*|_2[W_N]$.
	\end{lemma}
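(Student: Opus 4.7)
The plan is to exploit the fact that $W_N$ normalizes $\Gamma_0(N)$ in order to reindex the defining Poincar\'e series $P_{N,s}(W_N\tau,z)$ as a scalar multiple of $P_{N,s}(\tau,W_Nz)$, and then to pass to the analytic continuation at $s=0$.

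First, I would record the transformation property of the kernel $\varphi_s$ under the simultaneous Fricke action. Using the elementary identities
\[W_N\tau - W_Nz = \frac{\tau-z}{N\tau z},\qquad W_N\tau - W_N\bar z = \frac{\tau - \bar z}{N\tau\bar z},\qquad \Im{W_Nz} = \frac{\Im z}{N|z|^2},\]
a short direct computation yields
\[\varphi_s(W_N\tau,\, W_Nz) = N^{1+s}\,\tau^2\,|\tau|^{2s}\,\varphi_s(\tau,z).\]
Since $W_N$ acts as an involution on $\mbb H$, this may be rephrased as $\varphi_s(W_N\tau, z) = N^{1+s}\tau^2|\tau|^{2s}\,\varphi_s(\tau, W_Nz)$.

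Next I would reindex the sum $P_{N,s}(W_N\tau,z)$ via the substitution $M = W_N M' W_N^{-1}$, which is a bijection of $\Gamma_0(N)$ onto itself. Since $MW_N = W_N M'$, the cocycle identity $j(AB,\tau)=j(A,B\tau)\,j(B,\tau)$ together with $j(W_N,\tau)=N\tau$ gives
\[j(M,\,W_N\tau) = \frac{j(W_NM',\tau)}{j(W_N,\tau)} = \frac{(M'\tau)\,j(M',\tau)}{\tau}.\]
Plugging this alongside the $\varphi_s$ identity into the summand of $P_{N,s}(W_N\tau,z)$, the powers of $(M'\tau)$ and $|M'\tau|$ cancel cleanly, leaving
\[P_{N,s}(W_N\tau,\,z) = N^{1+s}\,\tau^2\,|\tau|^{2s}\,P_{N,s}(\tau,\,W_Nz).\]
This holds for $\Re s > 0$ by absolute convergence, and persists to $s=0$ by uniqueness of analytic continuation, since the prefactor $N^{1+s}\tau^2|\tau|^{2s}$ is entire in $s$ and both Poincar\'e series continue through $\Psi_{2,N}$.

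At $s=0$ both sides of this identity equal $\Im z\cdot\Psi_{2,N}(\cdot,z) = -2\pi H_{N,z}^*(\cdot)$ (after absorbing the appropriate $\Im z$ or $\Im(W_Nz)$ factor). Multiplying through by $1/(N\tau^2)$ and using $\Im{W_Nz} = \Im{z}/(N|z|^2)$, the $\Im z$ factors on the two sides cancel exactly, and I obtain
\[\frac{1}{N\tau^2}\,H_{N,z}^*(W_N\tau) = H_{N,W_Nz}^*(\tau),\]
which is precisely $H_{N,z}^*|_2[W_N](\tau) = H_{N,W_Nz}^*(\tau)$, since $\det W_N = N$ and $j(W_N,\tau)=N\tau$. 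The only real obstacle is bookkeeping: because $W_N\notin SL_2(\Z)$ the determinant $N$ appears in several places—once in the $\varphi_s$ transformation, once in each cocycle evaluation $j(W_N,\cdot)=N(\cdot)$, and once in $\det W_N$ in the slash operator—and the content of the argument is that these factors of $N$ conspire to cancel to give the stated clean identity.
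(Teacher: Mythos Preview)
Your proof is correct and follows essentially the same approach as the paper: compute the transformation of $\varphi_s$ under the simultaneous Fricke action, reindex the Poincar\'e series using that $W_N$ normalizes $\Gamma_0(N)$, obtain the identity $P_{N,s}(W_N\tau,z)=N^{1+s}\tau^2|\tau|^{2s}P_{N,s}(\tau,W_Nz)$, and then pass to the analytic continuation at $s=0$. The paper's version states the equivalent identity $\frac{N^{1+s}}{(N\tau)^2|N\tau|^{2s}}P_{N,s}(W_N\tau,W_Nz)=P_{N,s}(\tau,z)$ and arrives at the same conclusion by the same mechanism.
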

	\begin{proof}
		We first prove that \[\frac{N^{1+s}}{(N\tau)^2 |N\tau|^{2s}} P_{N,s}(W_N\tau, W_Nz) = P_{N,s}(\tau, z)\]
		for all $\Re s>0$. Note the elementary identities $j(W_N, \mf z) = N\mf z$, \[W_N \mf z - W_Nz = \frac{N(\mf z - z)}{j(W_N, \mf z)j(W_N, z)}, \quad \text{ and } \quad j(M,W_N\mf z) j(W_N, \mf z) = j(MW_N, \mf z)\]
		where $\mf z\in \mbb H$ and $M\in \Gamma_0(N)$ are arbitrary. These identities then imply that
		\[\varphi_s(W_N \mf z, W_N z) = \frac{\Im{z}^{1+s} N^{-1-s} |z|^{-2-2s}}{\frac{N(\mf z - z)}{j(W_N, \mf z)j(W_N, z)} \cdot \frac{N(\mf z - \bar z)}{j(W_N, \mf z)j(W_N, \bar z)} \cdot \left| \frac{N(\mf z - \bar z)}{j(W_N, \mf z)j(W_N, \bar z)} \right|^{2s}} = j(W_N, \mf z)^2 |j(W_N, \mf z)|^{2s} N^{-1-s} \varphi_s(\mf z, z) \]
		for all $\mf z, z\in \Gamma_0(N)$, $\mf z \ne z$. Thus, using the fact that $W_N^{-1} \Gamma_0(N)W_N = \Gamma_0(N)$, we have
		
		\begin{align*}
			\frac{N^{1+s}P_{N,s}(W_N \tau, W_Nz)}{j(W_N, \tau)^2 |j(W_N, \tau)|^{2s}} &= \sum_{M\in \Gamma_0(N)} \frac{N^{1+s}\varphi_s(MW_N \tau, W_N z)}{j(MW_N, \tau)^2 |j(MW_N, \tau)|^{2s}} = \sum_{M\in \Gamma_0(N)} \frac{N^{1+s}\varphi_s(W_NM \tau, W_N z)}{j(W_NM, \tau)^2 |j(W_NM, \tau)|^{2s}} \\
			&= \sum_{M\in \Gamma_0(N)} \frac{j(W_N, M\tau)^2 |j(W_N,  M\tau)|^{2s}\varphi_s(M\tau, z)}{j(W_NM, \tau)^2 |j(W_NM, \tau)|^{2s}} = P_{N,s}(\tau, z).
		\end{align*}
		Replacing $\tau$ with $-1/N\tau$ (and noting that $W_N^2 \tau = \tau$ and $j(W_N, W_N\tau) = -1/\tau$) yields 
		\[P_{N,s}\left(\tau, \frac{-1}{Nz}\right) = \frac{1}{N^{1+s} \tau^2 |\tau|^{2s}}P_{N,s}\left(\frac{-1}{N\tau}, z\right).\]
		The proposition now follows by passing to the analytic continuation at $s=0$.
	\end{proof}
	Taking the limit as $z\to i\infty$ then yields the following corollary.
	
	\begin{corollary}\label{cor::eisenstein_func_eqn}
		For all $\tau\in \mbb H$, we have $ H_{N, 0}^*(\tau) = H_{N,i\infty}^*|_2[W_N](\tau)$. 
	\end{corollary}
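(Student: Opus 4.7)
The plan is to obtain the corollary by taking $z \to i\infty$ in the identity of Lemma \ref{lem::func_eqn_H_N,z_and_H_N,WNz}. First I would compute that $W_N \cdot i\infty = 0$: indeed, $W_N z = -\frac{1}{Nz}$, and as $z \to i\infty$ this tends to $0$. Hence as $z \to i\infty$, the point $W_N z$ approaches the cusp $0$ of $\Gamma_0(N)$.

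Next, I would apply the limit $z \to i\infty$ to both sides of the identity
\[
H_{N, W_N z}^*(\tau) = H_{N,z}^*\big|_2[W_N](\tau).
\]
For the left-hand side, since $W_N z \to 0$, the limit formula $H_{N,\rho}^*(\tau) = \lim_{z \to \rho} H_{N,z}^*(\tau)$ (Theorem 1.2 of \cite{divisors_mod_forms}, already recalled in Subsection \ref{sctn::eisenstein_series}) yields
\[
\lim_{z \to i\infty} H_{N,W_N z}^*(\tau) = H_{N, 0}^*(\tau).
\]
For the right-hand side, the slash operator $|_2[W_N]$ acts only through $\tau$, multiplying by the $z$-independent factor $(\det W_N) j(W_N, \tau)^{-2}$ and evaluating the function at $W_N \tau$. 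Thus the limit passes inside the slash operator, giving
\[
\lim_{z \to i\infty} H_{N,z}^*\big|_2[W_N](\tau) = \left(\lim_{z \to i\infty} H_{N,z}^*\right)\big|_2[W_N](\tau) = H_{N, i\infty}^*\big|_2[W_N](\tau),
\]
again by the same limit formula. Combining the two equalities gives the desired identity.

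There is essentially no obstacle here: the only subtlety is ensuring that the pointwise limit $z \to i\infty$ is valid at the fixed point $\tau$, but this is immediate since for $y = \Im z$ sufficiently large, $\tau$ lies far from $\Gamma_0(N) z$ and $\Gamma_0(N) W_N z$, so both sides are smooth in $z$ in a neighborhood of the cusps, and the limits coincide with the values at the cusps by definition.
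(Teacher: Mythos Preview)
Your proof is correct and follows exactly the same approach as the paper, which simply states that the corollary follows by taking the limit $z\to i\infty$ in Lemma~\ref{lem::func_eqn_H_N,z_and_H_N,WNz}. Your added justification that $W_N z \to 0$ and that the slash operator commutes with the limit in $z$ is a welcome elaboration of what the paper leaves implicit.
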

	
	\subsection{Proof of Propositions \ref{prop::mellin_transform_z_converges_infty_only} and \ref{prop::mellin_transform_z_converges}}
	Suppose $\rho\in \{0,i\infty\}$; then the cusp widths are $\ell_{i\infty} = 1$ and $\ell_0=N$. For $y := \Im z$ large enough and any $1/y \le t_0 \le y$, we rewrite $L_{N,z\rho}(s)$ by expressing $H_{N,z}(\tau)$ in terms of its Fourier expansions at $i\infty$ and at $0$. By Proposition \ref{prop::fourier_at_arb_cusps}, the Fourier expansion at 0 for $\Im\tau$ large enough is 
	\[\begin{split}
		\tau^{-2}H_{N,z}^*\left(\frac{-1}{\tau} \right) ={}& \frac{3}{\pi [SL_2(\Z): \Gamma_0(N)] \Im{\tau}} + \frac{4\pi^2}{N^2} \sum_{m\ge 1} m \left( \sum_{c\ge 1} \frac{K_{i\infty, 0}(m,0;c) }{c^{2}}\right)  e^{2\pi i m \tau/N } \\
		& \quad - \frac{2\pi}{N^{3/2}} \sum_{m\ge 1} \sqrt m \left(\sum_{n,c \ge 1} \frac{e^{-2\pi i n \bar z} K_{i \infty, 0} (m, n; c)}{c \sqrt n} J_1 \left( \frac{4\pi}{c} \sqrt{\frac{mn}{N}} \right) \right) e^{2\pi i m \tau/N}\\
		&\quad - \frac{2\pi}{N^{3/2}} \sum_{m\ge 1} \sqrt m \left(\sum_{n,c \ge 1} \frac{e^{2\pi i n z} K_{i \infty, 0} (m, -n; c)}{c \sqrt n} I_1 \left( \frac{4\pi}{c} \sqrt{\frac{mn}{N}} \right) \right) e^{2\pi i m \tau/N}.
	\end{split}\]
	Now, the substitution $t\mapsto \frac1t$ for the first integral below and $t\mapsto \frac{1}{Nt}$ for the other two, along with Lemma \ref{lem::func_eqn_H_N,z_and_H_N,WNz} gives
	\begin{align*}
		\int_0^{t_0} t^{s-1}\left(H_{N,z}^*(it)  + \frac{3}{[SL_2(\Z) : \Gamma_0(N)] \pi t}\right)dt &= -\int_{1/t_0}^{\infty} t^{1-s}\left(\frac{1}{(it)^2} H_{N,z}^*\left(\frac{-1}{it} \right)  - \frac{3}{[SL_2(\Z) : \Gamma_0(N)] \pi t}\right)dt,\\
		\int_{t_0}^{\infty} t^{s-1}\left(H_{N,-1/Nz}^*(it)  - \frac{3}{[SL_2(\Z) : \Gamma_0(N)] \pi t}\right)dt &= N^{1-s} \int_{Nt_0}^{\infty} t^{s-1}\left(\frac{1}{(it)^2} H_{N,z}^*\left(\frac{-1}{it} \right)  - \frac{3}{[SL_2(\Z) : \Gamma_0(N)] \pi t}\right)dt ,\\ 
		\int_0^{t_0} t^{s-1}\left(H_{N,-1/Nz}^*(it)  + \frac{3}{[SL_2(\Z) : \Gamma_0(N)] \pi t}\right)dt &= -N^{1-s} \int_{1/Nt_0}^{\infty} t^{1-s}\left(H_{N,z}^*(it)  - \frac{3}{[SL_2(\Z) : \Gamma_0(N)] \pi t}\right)dt.
	\end{align*}
	
	Hence we have the (formal) decomposition
	\begin{align}
		L_{N,z}(s) &= -\frac{6}{[SL_2(\Z):\Gamma_0(N)]\pi} \frac{t_0^{s-1}}{s-1} + \sum_{k=1}^3 a_k(i\infty) I_{i\infty}^{(k)}(s,z;t_0) + I^{(4)}(s,z;t_0) - \sum_{k=1}^3 a_k(0) I_{0}^{(k)}\left(2-s,z;\frac1{t_0} \right),\label{eqn::decompose_L_N,z}\\
		L_{N,-1/Nz}(s) &= -\frac{6}{[SL_2(\Z):\Gamma_0(N)]\pi} \frac{t_0^{s-1}}{s-1} + N^{1-s} \sum_{k=1}^3 a_k(0) I_{0}^{(k)}(s,z;Nt_0) - N^{1-s} I^{(4)}\left(2-s,z;\frac1{Nt_0}\right) \nonumber\\
		&\qquad \qquad \qquad  -N^{1-s}\sum_{k=1}^3 a_k(i\infty) I_{i\infty}^{(k)}\left(2-s,z;\frac1{Nt_0} \right), \label{eqn::decompose_L_N,-1/Nz}
	\end{align}
	where 
	\[I_{\rho}^{(1)}(s,z;t_0) = I_\rho^{(1)}(s;t_0):= \frac1{\ell_\rho} \int_{t_0}^\infty t^{s-1} \sum_{m\ge 1}  j_{N,m}(\rho) e^{-2\pi m t/\ell_\rho} dt, \quad a_1(\rho) = 1\]
	\[I_{\rho}^{(2)}(s,z;t_0) := \int_{t_0}^\infty t^{s-1}\sum_{m,n,c\ge 1} \sqrt{\frac{m}{n}} \frac{K_{i\infty, \rho}(m,-n;c)}{c} I_1\left(\frac{4\pi \sqrt{mn}}{c\sqrt{\ell_\rho}} \right) e^{2\pi i n z} e^{-2\pi m t/\ell_\rho} dt, \quad a_2(\rho)= -\frac{2\pi}{\ell_\rho^{3/2}},\]
	\[I_\rho^{(3)}(s,z;t_0) := \int_{t_0}^\infty t^{s-1} \sum_{m,n,c\ge 1} \sqrt{\frac{m}{n}} \frac{K_{i\infty, \rho}(m,n;c)}{c} J_1\left(\frac{4\pi \sqrt{mn}}{c\sqrt{\ell_\rho} } \right) e^{-2\pi i n \bar z} e^{-2\pi m t/\ell_\rho}dt, \quad \quad a_3(\rho)= -\frac{2\pi}{\ell_\rho^{3/2}},\]
	\[I^{(4)}(s,z;t_0) := \int_{t_0}^\infty t^{s-1} \left(\frac{1}{e^{2\pi (t + iz)}-1} - \frac{1}{e^{2\pi (t + i\bar z)}-1} \right) dt.\]
	
	In order to control the Kloosterman sums, we need to establish the Weil bound for $K_{i\infty, 0}(m,n;c)$, a stronger result than that given in Lemma \ref{lem::weil_bound_general_kloostermansum}.
	
	\begin{lemma}
		If $(m,n)\ne (0,0)$, then $\displaystyle |K_{i\infty, 0}(m, n;c)| \le \tau(c)\sqrt{c} \sqrt{\gcd(m, n, c)}$.
	\end{lemma}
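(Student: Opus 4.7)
The plan is to leverage the general expression for $K_{i\infty, \rho}(m,n;c)$ derived in the proof of Lemma \ref{lem::weil_bound_general_kloostermansum}, which becomes essentially trivial in the special case $\rho = 0$. Indeed, for the cusp $\rho = 0$ we have $\alpha/\gamma = 0/1$, so $\gamma = 1$ and we can take $L = W_N^{-1} \in SL_2(\Z)$, giving $\delta = 1$ and cusp width $\ell = N$. First, I would observe that $K_{i\infty, 0}(m,n;c)$ vanishes unless $\gcd(N,c) = \gamma = 1$, so we may assume $\gcd(N,c) = 1$ throughout.

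Next, I would specialize the parameters $\ell_1, \ell_2, N_1$ from before Lemma \ref{lem::analytic_cont_Sigma2_arb_s}. With $\gamma = 1$, we have $\mathbb{P}(\gamma) = \emptyset$, hence $\ell_1 = 1$ and $\ell_2 = N$. Then $N_1 = (N/\gamma)/\ell_2 = 1$, and $A_1 = A_2 = 1$. Plugging into equation (\ref{eqn::Kloosterman_sum_double_sum}) from the proof of Lemma \ref{lem::weil_bound_general_kloostermansum}, the outer sum over $r \mod N_1$ collapses to a single term, and the condition $d' \equiv (c/\gamma)\delta \mod{N_1}$ becomes vacuous. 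One is left with
\[
K_{i\infty, 0}(m,n;c) = e\!\left(\frac{m[c]_N c}{N}\right) \sum_{\substack{d \mod c \\ \gcd(d,c)=1}} e\!\left(\frac{m[N]_c\, d}{c} + \frac{n[d]_c}{c}\right),
\]
where the inner sum is precisely the classical Kloosterman sum $K(m[N]_c, n; c)$.

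Finally, I would apply the Weil bound for classical Kloosterman sums:
\[
|K(m[N]_c, n; c)| \le \tau(c) \sqrt{c} \sqrt{\gcd(m[N]_c, n, c)}.
\]
Since $\gcd(N, c) = 1$, the integer $[N]_c$ is a unit modulo $c$, so $\gcd(m[N]_c, c) = \gcd(m, c)$, and therefore $\gcd(m[N]_c, n, c) = \gcd(m, n, c)$. Combining this with the fact that the external phase factor has absolute value $1$ yields the claimed bound. There is essentially no obstacle here beyond careful bookkeeping of the parameters; the key point is that the cusp $0$ is simpler than a general cusp $\alpha/\gamma$ precisely because $\gamma = 1$ trivializes the contributions that cause $(m,n,c)$-dependent factors beyond $\sqrt{\gcd(m,n,c)}$ in Lemma \ref{lem::weil_bound_general_kloostermansum}.
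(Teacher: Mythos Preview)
Your proposal is correct and follows essentially the same route as the paper: both reduce $K_{i\infty,0}(m,n;c)$ (for $\gcd(N,c)=1$) to the classical Kloosterman sum $K(m[N]_c, n; c)$, up to a unimodular phase, and then apply the Weil bound together with $\gcd(m[N]_c,n,c)=\gcd(m,n,c)$. The only difference is cosmetic---the paper unwinds the definition of $K_{i\infty,0}$ directly rather than specializing the machinery from Lemma~\ref{lem::weil_bound_general_kloostermansum}---and note a small slip: $W_N^{-1}\notin SL_2(\Z)$, so you should take $L=S=\smalltwosqmat{0}{-1}{1}{0}$ (this only affects the value of $\delta$ in the phase factor, which is irrelevant for the bound).
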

	\begin{proof}
		If $\gcd(c,N)\ne 1$, then the Kloosterman sum is in fact zero, and so the bound trivially holds. Thus assume $N$ and $c$ are co-prime; we then have 
		\begin{align*}
			K_{i\infty, 0}(m,n;c) &= \sum_{\smalltwosqmat abcd \in \Gamma_\infty \bs \Gamma_0(N)S / \Gamma_\infty^N} e\left(\frac1c\left(m(\tfrac dN) + na \right) \right) = \sum_{\begin{smallmatrix} d \modc{Nc}, N|d\\ (c,d)=1, ad\equiv 1\modc{c} \end{smallmatrix}} e\left(\frac1c\left(m(\tfrac dN) + na \right) \right) \\
			&= \sum_{\begin{smallmatrix} d' \modc{c}, (c,d')=1\\ aNd'\equiv 1\modc{c} \end{smallmatrix}} e\left(\frac1c\left(md' + na \right) \right) = \sum_{a\in (\Z/c\Z)^\times} e\left(\frac1c\left(m [N a]_c + na \right) \right) = K(m[N]_c, n;c)
		\end{align*}
		where $[x]_c$ for $x\in (\Z/c\Z)^\times$ denotes the inverse modulo $c$. Note that this is well-defined since the Kloosterman sum $K(m,n;c)$ depends only on $m$ and $n$ modulo $c$. Thus by the Weil bound for the classical Kloosterman sums we have \[|K_{i\infty, 0}(m,n;c)| = |K(m [N]_c,n;c)|\le \tau(c)\sqrt{c} \sqrt{\gcd(m [N]_c, n, c)} = \tau(c)\sqrt{c} \sqrt{\gcd(m, n, c)},\]
		the latter equality holding as $\gcd(c, [N]_c)=1$.
	\end{proof}
	
	We can now establish the convergence properties of the integral $I^{(1)}_\rho(s;t_0)$. Here, $\Gamma(s, a)$ is the (upper) incomplete Gamma function given by \[
	\Gamma(s,a) = \int_a^\infty t^{s-1} e^{-t} dt.\]
	
	\begin{lemma}\label{lem::I1_convergence}
		For any $t_0>0$ and any $s\in \C$, the following function (for fixed $t_0$) is entire
		\[I^{(1)}_\rho(s;t_0) = \ell_\rho^{s-1} \sum_{m\ge 1}j_{N,m}(\rho) \frac{\Gamma(s, 2\pi mt_0/\ell_\rho)}{(2 m\pi)^s}.\]
		Also for fixed $s$, we have\[\lim_{t_0\to \infty} I^{(1)}_\rho(s;t_0) = 0,\]
		and if $\Re s>\frac52$, then \[\lim_{t_0\to 0^+ } I^{(1)}_\rho(s;t_0) = \frac{\ell_\rho^{s-1}\Gamma(s)} {(2\pi)^s}\sum_{m\ge 1} \frac{j_{N,m}(\rho)}{m^s}\]
	\end{lemma}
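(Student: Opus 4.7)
The plan is to derive the closed-form for $I^{(1)}_\rho(s;t_0)$ by a direct substitution and then deduce both limits via dominated-convergence arguments, all grounded in the polynomial bound $j_{N,m}(\rho) \ll m^{3/2}$ (which follows from the explicit formulas in Corollary \ref{cor::explicit_formula_for_j_N,n_rho} together with $\sigma(n) = O(n^{1+\varepsilon})$).

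First I would verify the closed-form expression. For each fixed $m \ge 1$ the substitution $u = 2\pi m t/\ell_\rho$ gives
\[\int_{t_0}^\infty t^{s-1} e^{-2\pi m t/\ell_\rho}\,dt = \frac{\ell_\rho^s}{(2\pi m)^s}\,\Gamma\!\left(s, \frac{2\pi m t_0}{\ell_\rho}\right),\]
so it suffices to justify interchanging the sum over $m$ with the integral over $t$. This is a standard application of Tonelli: the polynomial growth $j_{N,m}(\rho) \ll m^{3/2}$ combined with the exponential decay $e^{-2\pi m t/\ell_\rho}$ yields absolute and $L^1$-convergence on $[t_0, \infty)$ against $t^{\Re{s}-1}$, for any $t_0 > 0$ and any $s \in \C$.

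For entireness in $s$, I would use that $s \mapsto \Gamma(s, a)$ is entire for each fixed $a > 0$, together with the bounds $|\Gamma(s, a)| \le \Gamma(\Re{s}, a)$ and $\Gamma(\sigma, a) \ll_\sigma a^{\sigma - 1}e^{-a}$ as $a \to \infty$. On a compact set $K \subset \C$, these bounds produce a majorant of the form (constant depending on $K$ and $t_0$) times $m^{3/2} e^{-\pi m t_0/\ell_\rho}$, giving uniform convergence on $K$ and hence entireness of the series. For $\lim_{t_0 \to \infty}$, I would return to the integral representation: a routine estimate shows $\sum_{m\ge 1}|j_{N,m}(\rho)|e^{-2\pi m t/\ell_\rho} \ll e^{-\pi t/\ell_\rho}$ for $t$ bounded below by a positive constant, which bounds $|I^{(1)}_\rho(s; t_0)|$ by a tail $\int_{t_0}^\infty t^{\Re{s}-1}e^{-\pi t/\ell_\rho}\,dt$ that visibly tends to zero.

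For $\lim_{t_0 \to 0^+}$ under $\Re{s} > 5/2$, I would combine the pointwise limit $\Gamma(s, a) \to \Gamma(s)$ (as $a \to 0^+$, for $\Re{s}>0$) with the uniform bound $|\Gamma(s, a)| \le \Gamma(\Re{s})$; the resulting $\ell^1$-dominator $|j_{N,m}(\rho)|\Gamma(\Re{s})(2\pi m)^{-\Re{s}} \ll m^{3/2 - \Re{s}}$ is summable exactly when $\Re{s} > 5/2$, so dominated convergence delivers the stated Dirichlet series limit. The main subtlety is precisely that this last step breaks down for $\Re{s} \le 5/2$; that is the very reason the formula given in the lemma provides a genuine meromorphic continuation of the Dirichlet series rather than merely reproducing it, and it is what drives the regularization built into $L_{N,\rho}(s)$ in Proposition \ref{prop::mellin_transform_at_cusps}.
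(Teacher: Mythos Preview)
Your argument mirrors the paper's almost exactly: justify the sum--integral interchange via a polynomial bound on $j_{N,m}(\rho)$, extract the incomplete-Gamma series, use the asymptotic $\Gamma(\sigma,a)\sim a^{\sigma-1}e^{-a}$ for entireness and the $t_0\to\infty$ limit, and use $|\Gamma(s,a)|\le\Gamma(\Re s)$ with dominated convergence for the $t_0\to 0^+$ limit.

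The one point to correct is the source of the bound $j_{N,m}(\rho)\ll m^{3/2}$. You invoke Corollary~\ref{cor::explicit_formula_for_j_N,n_rho}, but in the paper's logical order that corollary is deduced from Proposition~\ref{prop::mellin_transform_at_cusps}, and the proof of parts (1)--(3) of that proposition (and, via part (3), the $\rho=0$ case of part (4)) relies on the present lemma. The paper avoids this circularity by obtaining the bound directly from the Weil estimate applied to the defining sum
\[
j_{N,m}(\rho)=\frac{4\pi^2 m}{\ell_\rho}\sum_{c\ge 1}\frac{K_{i\infty,\rho}(m,0;c)}{c^2},
\]
which immediately gives $|j_{N,m}(\rho)|\ll m\sum_{c}\tau(c)\sqrt{\gcd(m,c)}\,c^{-3/2}\ll m^{3/2}$. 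Swap in this justification and your proof is complete.
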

	\begin{proof}
		First, we use the substitution $t\mapsto \ell_\rho t$ to get 
		\begin{equation}\label{eqn::I1_as_regularized_mellin}
			I^{(1)}_{\rho}(s;t_0) = \ell_\rho^{s-1} \int_{t_0/\ell_\rho }^\infty t^{s-1} \sum_{m\ge 1}  j_{N,m}(\rho) e^{-2\pi m t} dt = \ell_\rho^{s-1} \int_{t_0/\ell_\rho}^\infty t^{s-1} \left(H_{N,\rho}^*(it) + \delta_{\rho, i\infty} - \frac{3}{[SL_2(\Z):\Gamma_0(N)] \pi t} \right) dt.
		\end{equation}
		By the Weil bound, \[|j_{N,m}(\rho)| \le m\sum_{N|c\ge 1} \frac{\tau(c) \sqrt{c} \sqrt{\gcd(m,c)}}{c^2} < m^{3/2}\sum_{c\ge 1}\frac{\tau(c)}{c^{3/2}} = m^{3/2}\zeta(3/2)^2 \ll m^{3/2}.\]
		Thus \[\sum_{m\ge 1} |j_{N,m}(\rho) e^{-2\pi m t}| \ll \sum_{m\ge 1} m^{3/2} e^{-2\pi m t} =  \mr{Li}_{-3/2}(e^{-2\pi t}),\]
		which implies that convergence is absolute and locally uniform in $t$. Also, since $\mr{Li}_{-3/2}(e^{-2\pi t})$ is integrable over $[t_0, \infty)$ for $t_0>0$, it follows from the dominated convergence theorem that \[I^{(1)}_{\rho}(s,z;t_0) = \ell_\rho^{s-1} \sum_{m\ge 1} j_{N,m}(\rho) \int_{t_0/\ell_\rho}^\infty t^{s-1} e^{-2\pi mt}dt = \ell_\rho^{s-1} \sum_{m\ge 1} j_{N,m}(\rho)\frac{\Gamma(s, 2\pi m t_0 / \ell_\rho)} {(2\pi m)^s}.\]
		For fixed $t_0$, using $\Gamma(s, x) \sim x^{s-1} e^{-x}$ as $x\to \infty$ \cite[c.f.][{}6.5.32]{handbook_fns} gives
		\[\sum_{m\ge 1} \left|\ell_\rho^{s-1} j_{N,m}(\rho) \frac{\Gamma(s, 2\pi m t_0/\ell_\rho)} {(2\pi m)^s} \right| \ll \sum_{m\ge 1} \sqrt m t_0^{\sigma-1} e^{-2\pi m t_0/\ell_\rho} = t_0^{\sigma-1} \mr{Li}_{-1/2}(e^{-2\pi t_0/\ell_\rho}); \]
		hence convergence is uniform for $s$ in right-half planes. In particular $I^{(1)}_\rho(s,z;t_0)$ is entire in $s$ for fixed $t_0$. Incidentally, this also establishes the limit as $t_0\to \infty$ since $\mr{Li}_p(e^{-w})$ decays exponentially as $w\to \infty$. 
		
		Finally, using $|\Gamma(s, 2\pi m t_0)| \le \Gamma(\sigma, 2\pi mt_0) \le \Gamma(\sigma)$, we have 
		\[\sum_{m\ge 1} \left|j_{N,m}(\rho) \frac{\Gamma(s, 2\pi m t_0)} {(2\pi m)^s} \right| \ll \frac{\Gamma(\sigma)}{(2\pi)^\sigma} \sum_{m\ge 1} \frac{1}{m^{\sigma-3/2}}\]
		which converges for $\sigma>5/2$. Thus we have uniform convergence in $t_0$, which implies the limit as $t_0\to 0^+$. 
	\end{proof}
	
	Since we allow $t_0$ to depend on $y$, we need to add a minor restriction on $t_0$ to ensure convergence of $I^{(2)}_\rho$ and $I^{(3)}_\rho$. We give the following definition.
	\begin{definition}
		Suppose $f:\R^+\to \C$ is some function. We say that $f(t)$ depends \emph{rationally} on $t$ as $t\to a$ if there exists an integer $k\in \Z$ such that $f(t) = \Theta(t^k)$ as $t\to a$, i.e. there exist constants $B>A>0$ such that $At^k < |f(t)| < Bt^k$ for $t$ sufficiently close to $a$.
	\end{definition} 
	Essentially, $f(t)$ depends rationally on $t$ if it grows/decays as a rational function of $t$. Note that according to this definition, constant functions also depends rationally on $t$. 
	
	For $I^{(2)}_\rho(s,z;t_0)$, we have the following result.
	\begin{lemma}\label{lem::I2_convergence}
		Suppose $y>2$, and that $t_0$ depends rationally on $y$; moreover suppose there exists $k\in \N$ and $C\ge1$ such that $t_0\gg y^{-k}$ for all $y>C$. Then, the integral $I^{(2)}_\rho(s,z;t_0)$ converges for all $s\in \C$ and $y>C$, and defines an entire function of $s$. Moreover \[\lim_{y\to \infty} I^{(2)}_\rho(s,z;t_0) = 0\]
		even if $t_0$ depends rationally on $y$. 
	\end{lemma}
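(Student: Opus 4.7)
My plan is to apply Fubini--Tonelli to interchange summation and integration, reducing the three claims---convergence, entirety in $s$, and the limit at $y\to\infty$---to absolute convergence of a triple sum whose terms involve the incomplete Gamma function. Substituting $u=2\pi m t/\ell_\rho$ in each individual $t$-integral gives
\[\int_{t_0}^\infty t^{s-1}e^{-2\pi m t/\ell_\rho}\,dt = \left(\frac{\ell_\rho}{2\pi m}\right)^{\!s}\Gamma\!\left(s,\,\frac{2\pi m t_0}{\ell_\rho}\right),\]
so that formally
\[I^{(2)}_\rho(s,z;t_0) = \sum_{m,n,c\ge 1}\sqrt{\tfrac mn}\,\frac{K_{i\infty,\rho}(m,-n;c)}{c}\,I_1\!\left(\tfrac{4\pi\sqrt{mn}}{c\sqrt{\ell_\rho}}\right)e^{2\pi i nz}\left(\tfrac{\ell_\rho}{2\pi m}\right)^{\!s}\Gamma\!\left(s,\tfrac{2\pi m t_0}{\ell_\rho}\right).\]
To justify this manipulation and show convergence, I would bound each summand using the Weil-type bound $|K_{i\infty,\rho}(m,-n;c)|\ll_\epsilon c^{1+\epsilon}\sqrt n$ from Lemma \ref{lem::weil_bound_general_kloostermansum}, the asymptotic $I_1(x)\asymp e^x/\sqrt x$ as $x\to\infty$, and the standard estimate $|\Gamma(s,x)|\ll_s(1+x^{\sigma-1})e^{-x}$ uniform for $s$ in compacta (with $\sigma=\Re s$).

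These estimates majorise each summand, up to polynomial factors in $m,n,c,t_0$, by $\exp(E)$ with
\[E := -\frac{2\pi m t_0}{\ell_\rho} + \frac{4\pi\sqrt{mn}}{c\sqrt{\ell_\rho}} - 2\pi ny.\]
Viewed as a quadratic form in $(\sqrt m,\sqrt n)$, $E$ has discriminant $16\pi^2(c^{-2}-t_0y)/\ell_\rho$, hence is strictly negative definite for every $c\ge 1$ as soon as $t_0y>1$. The hypotheses $y>2$ and $t_0\gg y^{-k}$ for $y>C$, after enlarging $C$ if necessary, force $t_0y>1$ for every admissible $y$, so $\exp(E)$ decays geometrically in $m+n$ uniformly in $c\ge 1$. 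The triple sum therefore converges absolutely and locally uniformly in $s$, which simultaneously legitimises the interchange (so the integral and the sum agree as claimed) and proves $I^{(2)}_\rho(s,z;t_0)$ to be an entire function of $s$.

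For the limit as $y\to\infty$, I would invoke dominated convergence on the same triple sum. Each summand contains the factor $e^{2\pi inz}$, of modulus $e^{-2\pi ny}\to 0$ with $n\ge 1$, so the termwise limit vanishes. To produce a dominating summable function, extract $e^{-2\pi n(y-y_0)}$ for a fixed reference $y_0>C$ with $t_0(y_0)y_0>1$, so that after replacing $y$ by $y_0$ in $E$ the exponent remains negative definite by the previous analysis; the residual polynomial factors have at worst polynomial dependence on $y$ (through $t_0(y)$), which is absorbed by an arbitrarily small portion of the geometric decay. This gives $\lim_{y\to\infty}I^{(2)}_\rho(s,z;t_0)=0$. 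The principal technical step throughout is the discriminant analysis forcing $E$ to be genuinely negative definite, which is where the lower bound on $t_0$ and on $y$ enters quantitatively; once that is in hand the remainder is routine manipulation of absolutely convergent series of analytic functions.
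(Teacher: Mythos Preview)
Your discriminant argument contains a genuine gap. You claim the hypotheses force $t_0y>1$, but they do not: the lemma allows any $k\in\N$, so for instance $t_0=y^{-2}$ satisfies $t_0\gg y^{-2}$ yet $t_0y=y^{-1}\to 0$. Even in the principal application $t_0=1/y$ (see the Remark following the lemma), one has $t_0y=1$ exactly, and your discriminant $16\pi^2(c^{-2}-t_0y)/\ell_\rho$ vanishes at $c=1$, so the quadratic form in $(\sqrt m,\sqrt n)$ is only negative \emph{semi}definite. Along the null direction $m\approx y^2 n/\ell_\rho$ your exponent $E$ is zero and the polynomial prefactors give a divergent sum; your majorant is therefore not summable, and neither the Tonelli step nor the dominated convergence argument goes through. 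A secondary issue is the sum over $c$: using $I_1(x)\ll e^x/\sqrt{x}$ globally, the $c$-dependence of your majorant is $c^{1/2+\epsilon}e^{4\pi\sqrt{mn}/(c\sqrt{\ell_\rho})}$, which diverges as $c\to\infty$.

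The paper's proof confronts exactly these failures by a four-way case split (Lemmas \ref{lem::I-bessel_piece_X1}--\ref{lem::I-bessel_piece_X2_n_le_m_int_from_1/y to a}). First it separates small and large Bessel argument, using $I_1(x)\ll x$ in the former regime to make the $c$-sum converge and restricting to $c\ll\sqrt{mn}$ in the latter. Then, within the large-argument regime, it separates $m\le\ell_\rho n$ (where $e^{-2\pi ny}$ dominates and $y>2$ suffices) from $m>\ell_\rho n$; in the latter range it further splits the $t$-integral at a fixed height $>2/\ell_\rho$, and on the remaining short interval introduces an auxiliary cutoff $m\gtrless (y^{k+2}/A)^2\ell_\rho n$ that exploits the lower bound $t_0\ge 2Ay^{-k}$ to keep the exponent strictly negative without ever needing $t_0y>1$. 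Your quadratic-form heuristic is the right intuition for why convergence \emph{should} hold when $t_0y>1$, but the lemma is stated (and used) precisely at and below that threshold, where the more delicate region-by-region analysis of the paper is genuinely required.
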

	\begin{remark}
		Notice that if $t_0$ is independent of $y$, then we can find $k\in \N$ such that $t_0 \gg 2^{-k}$, and so the proposition holds for all $y>2$. Hence this result shows that if $t_0$ is independent of $y$, then $I^{(2)}_\rho(s,z;t_0)$ is entire and has the required convergence properties uniformly in $y>2$. This result also holds for $t_0 = 1/y$.
	\end{remark}
	\begin{proof}
		The proof requires four lemmas. To state the lemmas, first notice that $|I_1(x)| = O(x)$ as $x\to 0$ and $|I_1(x)| = O(x^{-1/2} e^x)$ as $x\to \infty$ \cite[][{}(9.6.7) and (9.7.1)]{handbook_fns}. Fix $C>0$ such that $|I_1(x)| \ll x$ for $0<x<C$ and $|I_1(x)| \ll x^{-1/2} e^x$ for $x>C$, for some uniform bounds depending only on $C$. A numerical calculation shows that $C=2$ suffices, for instance. For $\rho \in \{i\infty, 0\}$, the cusp widths are $\ell_\rho = 1$ if $\rho = i\infty$ and $\ell_\rho = N$ if $\rho = 0$. The lemmas can now be stated as follows.
		
		\begin{lemma}\label{lem::I-bessel_piece_X1}
			For any bound $a = a(y)>0$ depending rationally on $y$, the following integral \[I_1(a) := \int_{a}^\infty t^{s-1}\sum_{m\ge 1} \sum_{\begin{smallmatrix} n,c\ge 1 \\ \sqrt{mn} < C \sqrt{\ell_\rho} c/4\pi\end{smallmatrix}} \sqrt{\frac{m}{n}} \frac{K_{i\infty, \rho}(m,-n;c)}{c} I_1\left(\frac{4\pi \sqrt{mn}}{\sqrt{\ell_\rho} c} \right) e^{2\pi i n z} e^{-2\pi m t} dt\]
			exists, is an entire function of $s$ for fixed $z$ and $t_0$, and for fixed $s$ converges to 0 as $y\to \infty$. 
		\end{lemma}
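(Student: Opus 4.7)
The plan is to exploit the summation constraint $\sqrt{mn} < C\sqrt{\ell_\rho}\, c/(4\pi)$, which is precisely the range where the bound $|I_1(x)| \ll x$ for $x < C$ applies to the argument of the $I$-Bessel function. Combined with the sharp Weil-type estimate $|K_{i\infty, \rho}(m,-n;c)| \le \tau(c)\sqrt{c}\sqrt{\gcd(m,n,c)}$---which holds both for $\rho = i\infty$ (where $K_{i\infty,i\infty}(m,n;c)$ essentially reduces to the classical Kloosterman sum, so the classical Weil bound applies) and for $\rho = 0$ by the immediately preceding lemma---each summand is majorized by
\[\sqrt{\tfrac{m}{n}}\cdot \frac{|K_{i\infty,\rho}(m,-n;c)|}{c} \cdot \frac{4\pi\sqrt{mn}}{c\sqrt{\ell_\rho}} \cdot e^{-2\pi ny} \;\ll\; \frac{m\, \tau(c)\sqrt{\gcd(m,n,c)}}{c^{3/2}\sqrt{\ell_\rho}}\, e^{-2\pi ny}.\]

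I would first sum over $c$ in the range $c > 4\pi\sqrt{mn}/(C\sqrt{\ell_\rho})$. Using $\tau(c) \ll_\epsilon c^\epsilon$ and the crude estimate $\sqrt{\gcd(m,n,c)} \le (mn)^{1/4}$, the tail $\sum_{c > A} c^{-3/2+\epsilon}$ contributes at most $\ll (mn)^{-1/4+\epsilon}$ (bounded by an absolute constant when the cutoff drops below $1$), so the post-$c$ summand is controlled by $m^{1+\epsilon} n^\epsilon e^{-2\pi ny}$ up to constants depending on $\ell_\rho$ and $\epsilon$. After incorporating $|t^{s-1} e^{-2\pi mt}| \le t^{\Re{s}-1} e^{-2\pi mt}$, the integrand is dominated by
\[t^{\Re{s}-1}\left(\sum_{n\ge 1} n^\epsilon e^{-2\pi ny}\right)\left(\sum_{m\ge 1} m^{1+\epsilon} e^{-2\pi mt}\right),\]
which is absolutely integrable on $[a, \infty)$ for $y,t>0$ and uniformly so for $s$ ranging over any compact set. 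Fubini and the dominated convergence theorem then justify interchanging the triple sum with the $t$-integration; since each individual integral $\int_a^\infty t^{s-1} e^{-2\pi mt}\, dt = \Gamma(s, 2\pi m a)/(2\pi m)^s$ is entire in $s$, the uniform convergence yields entireness of $I_1(a)$.

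For the limit as $y \to \infty$, the $n$-sum $\sum_n n^\epsilon e^{-2\pi ny}$ decays exponentially in $y$, while the remaining $t$-integral $\int_a^\infty t^{\Re{s}-1} \sum_m m^{1+\epsilon} e^{-2\pi mt}\, dt$ can grow only through the behavior $\sum_m m^{1+\epsilon} e^{-2\pi mt} \asymp t^{-2-\epsilon}$ as $t \to 0^+$. Since $a = a(y) = \Theta(y^k)$ for some $k \in \Z$, this growth is at worst polynomial in $y$, so the exponential suppression in $y$ dominates and $\lim_{y\to\infty} I_1(a) = 0$ follows for each fixed $s$.

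The main subtlety is the \emph{necessity} of the sharp Weil bound: the weaker estimate $|K_{i\infty,\rho}(m,-n;c)| \ll c^{1+\epsilon}\sqrt{n}$ of Lemma \ref{lem::weil_bound_general_kloostermansum} would leave a divergent tail $\sum_c c^{-1+\epsilon}$ and the entire argument would collapse. Fortunately, the sharp $c^{1/2+\epsilon}$-type bound is available precisely for the two cusps $\rho \in \{0, i\infty\}$ under consideration, since $K_{i\infty,\rho}(m,-n;c)$ reduces to a twist of a classical Kloosterman sum in both cases.
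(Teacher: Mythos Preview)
Your proposal is correct and follows essentially the same approach as the paper: use $|I_1(x)|\ll x$ in the regime $x<C$, apply the sharp Weil bound $|K_{i\infty,\rho}(m,-n;c)|\le \tau(c)\sqrt{c}\sqrt{\gcd(m,n,c)}$, and separate the resulting bound into an exponentially decaying $n$-sum times an $m,t$-part that grows at most polynomially in $y$ through $a(y)$. The only cosmetic difference is that the paper bounds $\sqrt{\gcd(m,n,c)}\le\sqrt{n}$ and sums $\tau(c)/c^{3/2}$ over all $c\ge 1$ to get $m\,\mr{Li}_{-1/2}(e^{-2\pi y})$, whereas you bound $\sqrt{\gcd(m,n,c)}\le (mn)^{1/4}$ and exploit the tail condition $c\gg\sqrt{mn}$; both routes lead to the same conclusion.
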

		
		\begin{lemma}\label{lem::I-bessel_piece_X2_n_ge_m}
			Suppose $y>2$. For any bound $a=a(y)>0$ depending rationally on $y$, the following integral \[I_2(a) := \int_{a}^\infty t^{s-1}\sum_{\begin{smallmatrix} m,c\ge 1, \ell_\rho n \ge m \\ \sqrt{mn} > C \sqrt{\ell_\rho} c/4\pi\end{smallmatrix}} \sqrt{\frac{m}{n}} \frac{K_{i\infty, \rho}(m,-n;c)}{c} I_1\left(\frac{4\pi \sqrt{mn}}{\sqrt{\ell_\rho} c} \right) e^{2\pi i n z} e^{-2\pi m t} dt\]
			exists, is an entire function of $s$ for fixed $z$ and $t_0$, and for fixed $s$ converges to 0 as $y\to \infty$. 
		\end{lemma}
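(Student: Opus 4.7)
My plan is to majorize the absolute value of the integrand by combining the Weil-type bound on $|K_{i\infty,\rho}(m,-n;c)|$, the large-argument Bessel asymptotic $|I_1(x)|\ll x^{-1/2}e^x$ (which applies because the summation constraint $\sqrt{mn}>C\sqrt{\ell_\rho}\,c/(4\pi)$ forces the Bessel argument to exceed $C$), and the region constraint $\ell_\rho n\ge m$. Together with $y>2$ these will produce net exponential decay in $n$, from which existence, entireness in $s$, and the limit $y\to\infty$ all follow.

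First I would substitute the bounds. Using $|K_{i\infty,\rho}(m,-n;c)|\ll \tau(c)\sqrt{c\,\gcd(m,n,c)}$ (which holds either by the classical Weil bound for $\rho=i\infty$, where $K_{i\infty,i\infty}(m,-n;c)=K(m,-n;c)$ whenever $N\mid c$, or by the sharper bound just proved for $\rho=0$) together with the large-argument Bessel estimate, the absolute integrand is bounded by
\[|t^{s-1}|\,\tau(c)\sqrt{\gcd(m,n,c)}\,\ell_\rho^{1/4}\,\frac{m^{1/4}}{n^{3/4}}\,\exp\!\left(\frac{4\pi\sqrt{mn}}{c\sqrt{\ell_\rho}}-2\pi ny-2\pi mt\right).\]
The decisive observation is that $\ell_\rho n\ge m$ implies $\sqrt{mn}/\sqrt{\ell_\rho}\le n$, so for every $c\ge 1$ the bracketed exponent is at most $-2\pi n(y-2)-2\pi mt$, giving clean exponential decay in $n$ since $y>2$. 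The Bessel constraint also confines the $c$-sum to $1\le c\ll\sqrt{mn}$, so $\sum_c\tau(c)\sqrt{\gcd(m,n,c)}\ll_\epsilon(mn)^{1/2+\epsilon}$.

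After performing the $c$-sum and the $t$-integral (the latter evaluates as $(2\pi m)^{-s}\Gamma(s,2\pi ma)$, with a routine split at $t=1$ when $\Re s\le 0$), the resulting double series over $1\le m\le \ell_\rho n$ is dominated termwise by a polynomial in $m,n$ times $e^{-2\pi n(y-2)}$, which is absolutely summable for any fixed $s$ and any $y>2$. Fubini--Tonelli then legitimizes swapping sum and integral, establishing the existence of $I_2(a)$. Entireness in $s$ follows because the majorant is locally uniform on compact subsets of $\C$, so $I_2(a)$ is a uniform limit of holomorphic partial sums and hence entire by the Weierstrass convergence theorem. For the limit $y\to\infty$ with $s$ fixed and $a=a(y)$ of polynomial order in $y$, the factor $e^{-2\pi n(y-2)}$ furnishes exponential decay in $y$ for every $n\ge 1$, dominating any $a(y)^{\Re s}$-type polynomial growth from the $t$-integral; termwise dominated convergence then yields $I_2(a)\to 0$.

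The main obstacle is ensuring that the majorant is uniform in $y$ when both $a(y)\to 0$ and $\Re s<0$, for then the $t$-integral can grow like $a^{\Re s}\asymp y^{-j\Re s}$ for some integer $j$. The remedy is to note that this is only polynomial in $y$, whereas the exponential factor $e^{-2\pi(y-2)}$ from the $n=1$ term alone beats any polynomial; this is also precisely where the hypothesis $y>2$ is used in an essential way, namely to make the net exponent $-2\pi n(y-2)$ strictly negative after the Bessel asymptotic has contributed its $+4\pi n$.
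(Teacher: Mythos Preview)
Your proposal is correct and follows essentially the same route as the paper. The key step---using $m\le \ell_\rho n$ to bound $\sqrt{mn/\ell_\rho}\le n$, so that the Bessel exponential $e^{4\pi\sqrt{mn}/(c\sqrt{\ell_\rho})}$ combines with $e^{-2\pi ny}$ to give net decay $e^{-2\pi n(y-2)}$---is exactly what the paper does. The only cosmetic difference is the order of operations: the paper first bounds the $m$-sum for fixed $t$ by the geometric tail $1/(e^{2\pi t}-1)$ and then integrates in $t$ (splitting at $t=1$ when $a<1$), whereas you integrate in $t$ first via the incomplete Gamma function and then sum; both lead to a polynomial-in-$y$ factor dominated by the exponential $e^{-2\pi(y-2)}$. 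One small imprecision: your stated bound $\sum_{c\ll\sqrt{mn}}\tau(c)\sqrt{\gcd(m,n,c)}\ll_\epsilon(mn)^{1/2+\epsilon}$ is slightly too optimistic (the paper gets an extra $\min\{\sqrt m,\sqrt n\}$ factor), but since any polynomial in $m,n$ is absorbed by the exponential decay in $n$, this does not affect the argument.
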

		
		\begin{lemma}\label{lem::I-bessel_piece_X2_n_le_m_int_from_2<a_to_y}
			Suppose $y>0$. Let $a = a(y)$ be any bound that depends rationally on $y$ such that \[\inf_{y>0} a(y) > \frac2{\ell_\rho}.\] 
			Then \[I_3(a) := \int_{a}^\infty t^{s-1} \sum_{\begin{smallmatrix} n,c\ge 1, m>\ell_\rho n \\ \sqrt{mn} > C \sqrt{\ell_\rho} c/4\pi\end{smallmatrix}} \sqrt{\frac{m}{n}} \frac{K_{i\infty, \rho}(m,-n;c)}{c} I_1\left(\frac{4\pi \sqrt{mn}}{\sqrt{\ell_\rho} c} \right) e^{2\pi i n z} e^{-2\pi m t} dt\]
			exists, is an entire function of $s$ for fixed $z$ and $t_0$, and for fixed $s$ converges to 0 as $y\to \infty$. 
		\end{lemma}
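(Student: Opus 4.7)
\textbf{Proof proposal for Lemma \ref{lem::I-bessel_piece_X2_n_le_m_int_from_2<a_to_y}.} The key observation is that the constraint $m > \ell_\rho n$ suppresses the exponential growth of $I_1$ just enough that the factor $e^{-2\pi m t}$ wins, provided $t > 2/\ell_\rho$. Since the summation is restricted to $4\pi\sqrt{mn}/(c\sqrt{\ell_\rho}) > C$, I would apply the asymptotic $|I_1(x)| \ll x^{-1/2}e^x$. Combined with the Kloosterman bound of Lemma \ref{lem::weil_bound_general_kloostermansum} and $|e^{2\pi inz}| = e^{-2\pi ny}$, the absolute value of the generic summand is bounded by
\[\ll_\epsilon \ell_\rho^{1/4}\, c^{1/2+\epsilon}\, m^{1/4}\, n^{-1/4}\,\exp\!\left(\tfrac{4\pi\sqrt{mn}}{c\sqrt{\ell_\rho}} - 2\pi n y - 2\pi m t\right).\]
Because $m > \ell_\rho n$ forces $\sqrt{mn} \le m/\sqrt{\ell_\rho}$, and because $c\ge 1$, the Bessel exponent is dominated by $4\pi m/\ell_\rho$, reducing the exponential factor to
\[\exp\!\left(-2\pi m\bigl(t - \tfrac{2}{\ell_\rho}\bigr) - 2\pi n y\right).\]
The hypothesis $\inf_{y>0}a(y) > 2/\ell_\rho$ ensures $t - 2/\ell_\rho \ge a - 2/\ell_\rho > 0$ is uniformly bounded below, yielding genuine exponential decay in $m$.

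Next, I would sum over $c$, which is restricted to $c < 4\pi\sqrt{mn}/(C\sqrt{\ell_\rho})$, giving $\sum_c c^{1/2+\epsilon} \ll (mn)^{3/4+\epsilon/2}$. Multiplying by the prefactor $m^{1/4}n^{-1/4}$ and integrating $e^{-2\pi m(t - 2/\ell_\rho)}$ against $t^{s-1}$ from $a$ to $\infty$ yields an incomplete Gamma function $e^{4\pi m/\ell_\rho}\Gamma(s, 2\pi m a)/(2\pi m)^s$, which is entire in $s$ and (for $s$ in any fixed compact set) decays like $a^{\sigma-1}e^{-2\pi m(a-2/\ell_\rho)}/m$ in $m$. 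The resulting triple sum is majorized by an expression of the form
\[\sum_{m,n\ge 1} m^{\alpha}\, n^{1/2+\epsilon/2}\, e^{-2\pi m(a - 2/\ell_\rho)}\, e^{-2\pi n y},\]
which converges absolutely and uniformly on compact $s$-sets. Fubini then justifies interchanging sum and integral, realising $I_3(a)$ as a uniformly convergent sum of entire functions, hence entire.

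For the limit $y\to\infty$ with $s$ fixed, the bounds above are uniform in $y$: the $m$-sum is majorised by the fixed convergent series $\sum_m m^{\alpha}e^{-2\pi m(\inf a - 2/\ell_\rho)}$, while the $n$-sum $\sum_n n^{1/2+\epsilon/2}e^{-2\pi n y}$ tends to $0$. Dominated convergence then gives $I_3(a) \to 0$.

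\textbf{Main obstacle.} The delicate point is achieving uniform-in-$y$ control when $a(y)$ depends rationally on $y$, and in particular when $y$ is allowed to be arbitrarily small. The precise hypothesis $\inf_{y>0}a(y) > 2/\ell_\rho$ is exactly what keeps the exponential rate $e^{-2\pi m(a(y) - 2/\ell_\rho)}$ bounded below uniformly in $y$; this uniformity is what simultaneously enables the Weierstrass-style entirety argument and the dominated-convergence step for $y\to\infty$. Without this margin (say if one only knew $a(y) > 2/\ell_\rho$ pointwise), the $m$-decay could degenerate and neither conclusion would survive.
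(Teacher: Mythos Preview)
Your proposal is correct and follows essentially the same route as the paper: bound $I_1$ by its exponential asymptotic, exploit $m>\ell_\rho n$ to replace $4\pi\sqrt{mn}/\sqrt{\ell_\rho}$ by $4\pi m/\ell_\rho$, sum the finite $c$-range, and then control the $t$-integral via the incomplete Gamma function so that the $m$-series converges precisely when $a>2/\ell_\rho$ while the $n$-series supplies the exponential decay in $y$. The only notable difference is cosmetic: the paper invokes the Weil-type bound $|K|\le \tau(c)\sqrt{c}\sqrt{\gcd(m,n,c)}$ (established just before Lemma~\ref{lem::I1_convergence}) and packages the $c$-sum via $\sum_{c\le x}\tau(c)=O(x\log x)$ to reach the shared estimate~(\ref{eqn::bound_terms_when_I1bessel_exp}), whereas you cite the coarser Lemma~\ref{lem::weil_bound_general_kloostermansum}, yielding slightly larger polynomial prefactors in $m,n$; this changes nothing structurally. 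One small point to tighten: your final dominated-convergence step should also track the factor $a(y)^{\sigma-1}$ coming from $\Gamma(\sigma,2\pi m a)\sim (2\pi m a)^{\sigma-1}e^{-2\pi m a}$, which can grow polynomially in $y$ if $a(y)\to\infty$; since $a$ depends rationally on $y$ this growth is at most polynomial and is still killed by the exponential decay of $\sum_n n^{1/2+\epsilon/2}e^{-2\pi n y}$, exactly as in the paper's closing line.
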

		
		
		\begin{lemma}\label{lem::I-bessel_piece_X2_n_le_m_int_from_1/y to a}
			Suppose $y>2$. Consider bounds $a = a(y), b=b(y)$ both depending rationally on $y$ such that $0<a<b$, and moreover suppose that there exist $A>0$ and $k\in \N$ such that $a\ge2Ay^{-k}$ for all $y>1$. Then the following integral \[I_4(a,b) = \int_a^b t^{s-1} \sum_{\begin{smallmatrix} n,c\ge 1, m>\ell_\rho n \\ \sqrt{mn} > C\sqrt{\ell_\rho} c/4\pi\end{smallmatrix}} \sqrt{\frac{m}{n}} \frac{K_{i\infty, \rho}(m,-n;c)}{c} I_1\left(\frac{4\pi \sqrt{mn} }{\sqrt{\ell_\rho} c} \right) e^{2\pi i n z} e^{-2\pi m t/\ell_\rho} dt\]
			exists, is an entire function of $s$ for fixed $z$ and $t_0$, and for fixed $s$ converges to 0 as $y\to \infty$. 
		\end{lemma}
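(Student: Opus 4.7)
The plan is to mirror the proof template of Lemmas~\ref{lem::I-bessel_piece_X1}--\ref{lem::I-bessel_piece_X2_n_le_m_int_from_2<a_to_y}: dominate each summand pointwise, establish absolute and uniform convergence of the triple sum for $t\in[a,b]$ and for $s$ in compact subsets of $\C$, then swap the sum and integral and take $y\to\infty$.

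First I would bound each term pointwise. Since $\rho\in\{0,i\infty\}$, the sharp Weil bound $|K_{i\infty,\rho}(m,-n;c)|\ll_\epsilon c^{1/2+\epsilon}$ holds (as proved earlier in this section), and the summation constraint $\sqrt{mn}>C\sqrt{\ell_\rho}c/4\pi$ with $C=2$ places the argument of $I_1$ above $2$, so the asymptotic $|I_1(x)|\ll x^{-1/2}e^x$ applies. Combined with $|e^{2\pi inz}|=e^{-2\pi ny}$, each term of the integrand is bounded by
\[c^\epsilon\,\ell_\rho^{1/4}\,\frac{m^{1/4}}{n^{3/4}}\,\exp\!\Big(\frac{4\pi\sqrt{mn}}{c\sqrt{\ell_\rho}}-2\pi ny-\frac{2\pi mt}{\ell_\rho}\Big).\]
To control the cross-term in the exponent I would use AM--GM in the form $2\sqrt{UV}\le\mu U+V/\mu$ with $U=mt/\ell_\rho$ and $V=ny$, giving
\[\frac{4\pi\sqrt{mn}}{c\sqrt{\ell_\rho}}\;=\;\frac{4\pi\sqrt{UV}}{c\sqrt{ty}}\;\le\;\frac{2\pi\mu\,mt}{\ell_\rho\,c\sqrt{ty}}+\frac{2\pi\,ny}{\mu\,c\sqrt{ty}}.\]
When $c^2 ty>1$, I may choose $\mu\in\bigl(1/(c\sqrt{ty}),\,c\sqrt{ty}\bigr)$, so the composite exponent becomes $-\alpha_m\,m-\alpha_n\,n$ with $\alpha_m,\alpha_n>0$. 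The hypotheses $y>2$ and $t\ge a\ge 2Ay^{-k}$ ensure admissibility for all $c$ outside a bounded set, and the finitely many exceptional small-$c$ terms are estimated directly using the decay $e^{-2\pi ny}$ for $n\ge 1$ (which, because $y>2$, dominates the Bessel growth once $c$ is bounded), exactly as in the parallel treatment used for Lemmas~\ref{lem::I-bessel_piece_X2_n_ge_m} and \ref{lem::I-bessel_piece_X2_n_le_m_int_from_2<a_to_y}. This produces a dominating series in $(m,n,c)$ that is summable and independent of $t\in[a,b]$ and of $s$ in any compact set.

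With uniform convergence secured, Fubini's theorem lets me interchange sum and integral, yielding
\[I_4(a,b)=\sum_{\substack{n,c\ge 1,\,m>\ell_\rho n\\\sqrt{mn}>C\sqrt{\ell_\rho}c/4\pi}}\sqrt{\tfrac{m}{n}}\,\tfrac{K_{i\infty,\rho}(m,-n;c)}{c}\,I_1\!\Big(\tfrac{4\pi\sqrt{mn}}{c\sqrt{\ell_\rho}}\Big)\,e^{2\pi inz}\Big(\tfrac{\ell_\rho}{2\pi m}\Big)^{\!s}\!\bigl[\Gamma(s,\tfrac{2\pi ma}{\ell_\rho})-\Gamma(s,\tfrac{2\pi mb}{\ell_\rho})\bigr].\]
Each term is entire in $s$, and the uniform majorant justifies termwise holomorphy, so $I_4(a,b)$ is an entire function of $s$. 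For the limit $y\to\infty$, every term of the series carries the factor $e^{-2\pi ny}$ with $n\ge 1$; combined with the dominating bound from the previous paragraph, this yields $I_4(a,b)\to 0$ by dominated convergence.

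The main obstacle lies in the AM--GM calibration: when $a=\Theta(y^{-k})$ with $k$ large, the admissibility $c^2 ty>1$ degenerates precisely in the corner $c=1$, $t$ near $a$, where the decay rates $\alpha_m,\alpha_n$ threaten to vanish. The resolution is to partition the $c$-sum into a bounded low range (treated by direct estimation exploiting $y>2$ and the polynomial-in-$y$ number of such $c$) and a high range (treated by AM--GM with a quantitative choice of $\mu$, such as $\mu=\tfrac12 c\sqrt{ty}$, yielding $\alpha_m,\alpha_n$ bounded below uniformly). This is exactly the strategy used in the preceding two lemmas, and once it is in place the entireness in $s$ and the $y\to\infty$ limit follow as in those proofs.
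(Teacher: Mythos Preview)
Your AM--GM idea is attractive but the patch you propose for the corner $c^2ty\le 1$ does not work. When $c$ is bounded (say $c=1$) and $t$ is near $a\asymp y^{-k}$, the claim that ``the decay $e^{-2\pi ny}$ dominates the Bessel growth'' is false in the regime $m>\ell_\rho n$: the Bessel contribution is $\exp(4\pi\sqrt{mn}/\sqrt{\ell_\rho})$, and since $m$ is allowed to be arbitrarily large compared to $n$, this is \emph{not} controlled by $e^{-2\pi ny}$ alone. Concretely, for fixed $n$ the exponent $4\pi\sqrt{mn/\ell_\rho}-2\pi ny-2\pi mt/\ell_\rho$ is maximized over $m$ at $m_\ast=n\ell_\rho/t^2$ with value $2\pi n(1/t-y)$; when $t<1/y$ this is positive, and the absolute sum over $n$ diverges. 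So for $k\ge 2$ your majorant is infinite on part of $[a,b]$, and neither Fubini nor your termwise limit argument is justified. The previous two lemmas avoid this precisely because one of the two decay mechanisms ($y>2$ with $n\ge m/\ell_\rho$, or $t>2/\ell_\rho$ with $m>\ell_\rho n$) alone already beats the Bessel growth; here neither does.

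The paper's argument is structurally different: it first sums over $c$ via the Weil bound to get the pointwise estimate $\ll m\log m\,e^{4\pi\sqrt{mn/\ell_\rho}-2\pi ny-2\pi mt/\ell_\rho}$, then splits the sum over $m$ at the threshold $m=(y^{k+2}/A)^2\ell_\rho n$. For $m$ above the threshold one has $2\sqrt{\ell_\rho n/m}<2A/y^{k+2}$, so $e^{-2\pi mt/\ell_\rho}$ absorbs the Bessel growth uniformly for $t\ge a\ge 2Ay^{-k}$. For $m$ below the threshold one completes the square, writing the exponent as $-2\pi(\sqrt{m/\ell_\rho}-\sqrt n)^2-2\pi n(y-1)+2\pi m(1-t)/\ell_\rho$, and then exploits that the $m$-range has length polynomial in $y$ times $n$ to control the (possibly growing) geometric sum in $m$ against the strong decay $e^{-2\pi n(y-1)}$; a further split at $t=1$ handles the sign change in $1-t$. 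The essential idea you are missing is this $m/n$-ratio dichotomy: you must separate the regime where $m/n$ is so large that $t$-decay alone wins from the regime where $m/n$ is only polynomially large in $y$, where the $n$-decay combined with the bounded range of $m$ is what saves you.
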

		
		To establish the proposition, it suffices to notice that if $t_0$ is bounded away from $2/\ell_\rho$, then $$I^{(2)}_\rho(s,z;t_0) = I_1(t_0) + I_2(t_0) + I_3(t_0),$$
		and otherwise 
		$$I^{(2)}_\rho(s,z;t_0) = I_1(t_0) + I_2(t_0) + I_3(c) + I_4(t_0, c)$$
		for some fixed $c>2/\ell_\rho$, where the desired convergence properties follow from the lemmas.
	\end{proof}
	We postpone the proof of the four lemmas to the end of the section due to its lengthy and tedious nature.
	
	Next, let us establish the convergence properties of $I^{(3)}_\rho(s,z;t_0)$.
	
	\begin{lemma}\label{lem::J-Bessel_converge_0}
		Suppose $y>1$, and suppose $t_0$ depends rationally on $y$. Then $I^{(3)}_\rho(s,z;t_0)$ exists and is an entire function of $s$, and converges to zero as $y\to \infty$. 
	\end{lemma}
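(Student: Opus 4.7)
The strategy is to exploit the fact that the $J$-Bessel function is globally bounded---in sharp contrast to $I_1$, which grows exponentially---so that no decomposition of the triple sum into regions is needed. The key input is the classical inequality $|J_1(x)| \le x/2$, valid for all $x \ge 0$, which plays the role that the piecewise bounds on $I_1$ played in Lemma \ref{lem::I2_convergence}, but in one uniform stroke.

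The plan is as follows. First I would combine $|J_1(x)| \le x/2$ with the generalized Weil bound from Lemma \ref{lem::weil_bound_general_kloostermansum} (for $\rho=i\infty$ one may use the stronger bound already stated in the text). This yields, for any $\epsilon>0$,
\[\left|\sqrt{\tfrac{m}{n}}\,\frac{K_{i\infty,\rho}(m,n;c)}{c}\,J_1\!\left(\frac{4\pi\sqrt{mn}}{c\sqrt{\ell_\rho}}\right) e^{-2\pi i n \bar z}\,e^{-2\pi m t/\ell_\rho}\right| \ll_{\epsilon,N,\ell_\rho}\; c^{\epsilon-3/2}\,m\sqrt{\gcd(m,n,c)}\,e^{-2\pi n y}\,e^{-2\pi m t/\ell_\rho}.\]
Summing over $c$ by decomposing according to $d := \gcd(m,n,c)$ (which must divide $\gcd(m,n)$) gives $\sum_c c^{\epsilon-3/2}\sqrt{\gcd(m,n,c)} \ll_\epsilon \sigma_{\epsilon-1}(\gcd(m,n)) \ll (mn)^\epsilon$. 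The sum over $n\ge1$ with $y>1$ then produces the prefactor $e^{-2\pi y}$ (since $e^{-2\pi n y} = e^{-2\pi y}e^{-2\pi(n-1)y}$ and the residual geometric series is uniformly bounded), and the sum over $m$ collapses to a polylogarithm, giving the pointwise bound
\[\left|\sum_{m,n,c\ge 1}(\cdots)\right| \ll_\epsilon\; e^{-2\pi y}\,\mr{Li}_{-1-\epsilon}\!\left(e^{-2\pi t/\ell_\rho}\right).\]
Absolute convergence here justifies interchanging sum and integral in the definition of $I_\rho^{(3)}$.

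To conclude I would use that $\mr{Li}_{-1-\epsilon}(e^{-2\pi t/\ell_\rho})$ decays exponentially as $t\to\infty$ and behaves like $O(t^{-2-\epsilon})$ as $t\to 0^+$. For any fixed $t_0>0$ the integral is therefore absolutely convergent, uniformly in $s$ on compacta, so a standard Morera argument (mirroring the one in Lemma \ref{lem::I1_convergence}) shows $I_\rho^{(3)}(s,z;t_0)$ is entire in $s$. Finally, writing $t_0 = \Theta(y^k)$ for some $k\in\Z$ one has
\[\int_{t_0}^{\infty} t^{\sigma-1}\,\mr{Li}_{-1-\epsilon}\!\left(e^{-2\pi t/\ell_\rho}\right)dt \;\ll\; 1 + t_0^{\sigma-2-\epsilon},\]
which is at most polynomial in $y$; the $e^{-2\pi y}$ prefactor then forces $I_\rho^{(3)}(s,z;t_0)\to 0$ as $y\to\infty$.

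Insofar as there is any obstacle, it lies in the careful bookkeeping of the polynomial factors through the Kloosterman estimate and the $c$-summation, so that the residual $m$- and $n$-sums close up into a product of a $y$-dependent exponential and a $t$-dependent polylogarithm. Each individual manipulation, however, is already present in the bounds developed in Lemmas \ref{lem::I1_convergence} and \ref{lem::I-bessel_piece_X1}, so the proof is essentially a streamlined version of those arguments in a single regime.
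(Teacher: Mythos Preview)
Your proposal is correct and follows essentially the same line as the paper's proof: bound $|J_1(x)|\ll x$, apply the Weil estimate, sum in $c$ to a constant, and collapse the $n$- and $m$-sums into an exponentially decaying factor in $y$ times a polylogarithm in $t$, whose integral is at most polynomial in $t_0$ and hence in $y$. One small quibble: the displayed bound with the factor $\sqrt{\gcd(m,n,c)}$ and the exponent $c^{\epsilon-3/2}$ actually requires the sharper Weil estimate $|K_{i\infty,\rho}(m,n;c)|\le \tau(c)\sqrt{c}\sqrt{\gcd(m,n,c)}$ (established just above for $\rho=0$, and classical for $\rho=i\infty$), not the weaker $c^{1+\epsilon}\sqrt{|n|}$ bound of Lemma~\ref{lem::weil_bound_general_kloostermansum} that you cite---the latter would leave a divergent $\sum_c c^{\epsilon-1}$. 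Since only $\rho\in\{0,i\infty\}$ is needed here, the sharper bound is available and your argument goes through; the paper simply bounds $\gcd(m,n,c)\le n$ instead of your divisor decomposition, arriving at $\mr{Li}_{-1/2}(e^{-2\pi y})$ in place of your $e^{-2\pi y}$ prefactor.
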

	\begin{proof}
		Using $|J_1(x)| = O(x)$ for small $x$, $|J_1(x)| = O(1)$ for large $x$, and the Weil bound, we have
		\begin{align*}
			\left|\sum_{c\ge 1} \frac{K_{i\infty, \rho}(m,n;c)}{c} J_1 \left(\frac{4\pi \sqrt{mn}}{c} \right)\right| & \ll_{\ell_\rho} \sum_{c\ge1} \frac{\tau(c) \sqrt{\gcd(m,n,c)} \sqrt{c}}{c}\cdot \frac{\sqrt{mn}}{c} \ll n\sqrt m.
		\end{align*} 
		Thus the sum over $n$ and $c$ is bounded above by\[\ll_{\ell_\rho} \sum_{n\ge 1} \sqrt{\frac mn}\cdot  n\sqrt{m} e^{-2\pi n y}e^{-2\pi m t} = me^{-2\pi m t}\mr{Li}_{-1/2}(e^{-2\pi y}),\]
		and so the entire sum is bounded above by
		\begin{align*}
			&\ll_{\ell_\rho} \mr{Li}_{-1/2}(e^{-2\pi y})\sum_{m\ge 1} m e^{-2\pi m t} = \frac{\mr{Li}_{-1/2}(e^{-2\pi y}) e^{-2\pi t}}{(1 - e^{-2\pi t})^2}.
		\end{align*}
		Thus the required integral is bounded above by
		\begin{align*}
			&\ll_{\ell_\rho} \mr{Li}_{-1/2}(e^{-2\pi y}) \int_{t_0}^\infty \frac{t^{\sigma - 1}e^{-2\pi t}}{(1 - e^{-2\pi t})^2} dt \le \frac{\mr{Li}_{-1/2}(e^{-2\pi y}) }{(1 - e^{-2\pi t_0})^2} \frac{\Gamma(\sigma, 2\pi t_0)}{(2\pi)^\sigma}
		\end{align*}
		Thus the integral exists and we have local uniform convergence for $s\in \C$. Moreover, as $(1 - e^{-2\pi t_0})^{-2} = O(t_0^2)$ where $t_0$ depends rationally on $y$, it follows that $(1 - e^{-2\pi t_0})^{-2}$ depends rationally on $y$. The exponential decay of the polylogarithm term implies that the above bound goes to zero as $y\to \infty$. 
	\end{proof}
	
	Finally we consider $I^{(4)}$. This consists of two pieces, one of which is easily controlled.
	\begin{lemma}
		For any $t_0>0$ depending rationally on $y$,\[\int_{t_0}^\infty t^{s-1} \frac{1}{e^{2\pi (t + i\bar z)}-1} dt\] is entire in $s$, and converges to 0 as $y\to \infty$. 
	\end{lemma}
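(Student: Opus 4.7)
The plan is to expand the integrand as a geometric series, evaluate the resulting integrals in closed form, and then deduce both entirety and decay in $y$ from exponential decay estimates.

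First, since $y>0$ we have $|e^{-2\pi(t+i\bar z)}| = e^{-2\pi(t+y)}<1$ for all $t>0$, so
\[ \frac{1}{e^{2\pi(t+i\bar z)} - 1} = \sum_{n\ge 1} e^{-2\pi n(t+i\bar z)}. \]
Substituting and interchanging sum and integral (justified by absolute convergence, which I would verify by the trivial bound $|e^{-2\pi n(t+i\bar z)}| = e^{-2\pi n(t+y)}$) yields
\[ \int_{t_0}^\infty t^{s-1}\frac{dt}{e^{2\pi(t+i\bar z)}-1} = \sum_{n\ge 1} e^{-2\pi i n\bar z}\,\frac{\Gamma(s, 2\pi n t_0)}{(2\pi n)^s}. \]
Each summand is entire in $s$, since $s\mapsto \Gamma(s, a)$ is entire for each fixed $a>0$. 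To upgrade this to entirety of the sum I would use the factor $|e^{-2\pi i n\bar z}| = e^{-2\pi n y}$ together with the asymptotic $\Gamma(s, a) \sim a^{s-1}e^{-a}$ as $a\to\infty$ (locally uniform in $s$) to get exponential decay in $n$, hence absolute, locally uniform convergence in $s$ on $\C$.

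For the limit as $y\to\infty$, rather than working with the series I would bound the original integral directly. For $y$ large enough that $e^{2\pi y}\ge 2$, the elementary bound $|e^{2\pi(t+i\bar z)} - 1| \ge e^{2\pi(t+y)} - 1 \ge \tfrac12 e^{2\pi(t+y)}$ gives
\[ \left|\int_{t_0}^\infty t^{s-1}\frac{dt}{e^{2\pi(t+i\bar z)}-1}\right| \le 2 e^{-2\pi y} \int_{t_0}^\infty t^{\sigma-1} e^{-2\pi t}\, dt = \frac{2 e^{-2\pi y} \Gamma(\sigma, 2\pi t_0)}{(2\pi)^\sigma}, \]
where $\sigma = \Re s$. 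Since $t_0$ depends rationally on $y$, the quantity $\Gamma(\sigma, 2\pi t_0)$ grows at most polynomially in $y$ (one uses $\Gamma(\sigma, a)\sim a^{\sigma-1}e^{-a}$ if $2\pi t_0 \to \infty$, boundedness if $t_0$ stays away from $0$ and $\infty$, and $\Gamma(\sigma, a) \ll a^{\sigma}$ or $|\log a|$ if $t_0 \to 0^+$). Either way, the factor $e^{-2\pi y}$ dominates and the bound tends to $0$.

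The only real subtlety is the last case: if $t_0(y) \to 0$ and $\Re s \le 0$, the incomplete Gamma factor $\Gamma(\sigma, 2\pi t_0)$ itself blows up. However, the rational dependence of $t_0$ on $y$ guarantees this blow-up is at most polynomial in $y$, which is then overwhelmed by the exponential $e^{-2\pi y}$. There are no further obstacles.
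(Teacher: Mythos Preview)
Your proof is correct and follows essentially the same approach as the paper: expand as a geometric series, swap sum and integral to obtain $\sum_{n\ge 1} e^{-2\pi i n\bar z}\,\Gamma(s,2\pi n t_0)/(2\pi n)^s$, and use the exponential factor $e^{-2\pi n y}$ together with growth properties of the incomplete Gamma function. The only minor difference is that for the $y\to\infty$ limit the paper bounds via the series (obtaining $\tfrac{\Gamma(\sigma,2\pi t_0)}{(2\pi)^\sigma}\,\mathrm{Li}_\sigma(e^{-2\pi y})$), whereas you bound the integral directly; both give the same exponential-times-polynomial estimate.
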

	\begin{proof}
		First note that \[\frac{1}{e^{2\pi (t + i\bar z)}-1} = \sum_{n\ge 1}e^{-2\pi i n \bar z} e^{-2\pi n t} = \sum_{n\ge 1}e(-nx) e^{-2\pi n (t+y)}.\]
		By dominated convergence in $t$, we have
		\[\int_{t_0}^\infty t^{s-1} \sum_{n\ge 1} e^{-2\pi i n \bar z} e^{-2\pi n t} dt = \sum_{n\ge 1} e^{-2\pi i n \bar z} \int_{t_0}^\infty t^{s-1} e^{-2\pi n t} dt = \sum_{n\ge 1} e^{-2\pi i n \bar z} \frac{\Gamma(s, 2\pi n t_0)}{(2\pi n)^s}\]
		For fixed $y$ and $t_0$, using $|\Gamma(s, 2\pi n t_0)| \le \Gamma(\sigma, 2\pi t_0)$ yields \[\left|\int_{t_0}^\infty t^{s-1} \frac{1}{e^{2\pi (t + i\bar z)}-1} dt \right| \le \frac{\Gamma(s, 2\pi t_0)}{(2\pi)^s} \mr{Li}_\sigma(e^{-2\pi y})\]
		and hence the series converges absolutely and locally uniformly in $s\in \C$ for fixed $y$ and $t_0$. Also, since $\Gamma(s, 2\pi t_0)$ decays exponentially as $t_0\to \infty$, is bounded for $t_0\to 0$ if $s\notin \Z_{\le 0}$, and grows polynomially for $t_0\to 0$ if $s\in \Z_{\le 0}$, the lemma follows.
	\end{proof}
	
	We also have the following lemma, which is easily seen due to the holomorphicity of $(e^{2\pi i(z-\tau)} - 1)^{-1}$ for $z\ne \tau$.
	\begin{lemma}
		For fixed $y$ and $t_0$, and assuming $x\notin \Z$, \[\int_{t_0}^\infty  \frac{t^{s-1}}{e^{2\pi (t+iz)} - 1} dt \]
		is entire in $s$.
	\end{lemma}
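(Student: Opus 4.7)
The plan is to verify that the integrand is well-defined and rapidly decreasing on the ray of integration, then invoke Morera's theorem to upgrade continuity in $s$ to holomorphicity.

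First I would locate the zeros of the denominator. We have $e^{2\pi(t+iz)} = 1$ precisely when $t + iz \in i\Z$, equivalently $t = i(k - z) = y + i(k - x)$ for some $k \in \Z$. This lies on the real axis only if $k = x$, and the hypothesis $x \notin \Z$ rules this out. Hence $f(t) := \bigl(e^{2\pi(t+iz)} - 1\bigr)^{-1}$ is holomorphic on a complex neighborhood of the ray $[t_0,\infty)$, and in particular continuous and bounded on every compact subinterval.

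Next I would control the tail. For $t > y$ we have $|e^{2\pi(t+iz)}| = e^{2\pi(t-y)} > 1$, so
\[
|f(t)| \;\le\; \frac{1}{e^{2\pi(t-y)} - 1} \;=\; O\!\bigl(e^{-2\pi(t-y)}\bigr)
\]
as $t \to \infty$. Combined with the previous paragraph, there is a constant $M = M(z,t_0)$ such that $|f(t)| \le M e^{-2\pi t}$ for all $t \ge t_0$ (adjusting the constant to absorb the bounded behavior near $t_0$ and the $e^{2\pi y}$ factor). Thus for any $s$ with $\Re s = \sigma$,
\[
\Bigl|\,t^{s-1} f(t)\,\Bigr| \;\le\; M\, t^{\sigma-1} e^{-2\pi t},
\]
which is integrable on $[t_0,\infty)$; the bound is uniform for $s$ varying in any compact subset $K \subset \C$ (taking $\sigma$ to range over the corresponding bounded real interval and majorizing $t^{\sigma-1}$ accordingly).

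Finally, to establish entirety in $s$, I would apply Morera's theorem. For any closed triangle $\Delta$ in $\C$, the uniform integrability bound above justifies Fubini:
\[
\oint_{\partial\Delta} \int_{t_0}^\infty \frac{t^{s-1}}{e^{2\pi(t+iz)} - 1}\, dt\, ds \;=\; \int_{t_0}^\infty \frac{1}{e^{2\pi(t+iz)} - 1} \left(\oint_{\partial\Delta} t^{s-1}\, ds\right) dt \;=\; 0,
\]
since $s \mapsto t^{s-1}$ is entire for each fixed $t > 0$. Morera's theorem thus gives holomorphicity on all of $\C$. The only subtle point is the verification that $x \notin \Z$ forces the denominator to be bounded away from zero on $[t_0,\infty)$, and this is immediate from the zero computation above; there is no real obstacle beyond carefully packaging the bounds.
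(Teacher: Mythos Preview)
Your proof is correct and is essentially a detailed write-up of the paper's one-line justification, which simply remarks that the result ``is easily seen due to the holomorphicity of $(e^{2\pi i(z-\tau)}-1)^{-1}$ for $z\ne\tau$.'' Your explicit location of the denominator's zeros, the exponential tail bound, and the Morera argument supply exactly the details the paper omits, with no substantive difference in strategy.
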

	In particular, these two lemmas show that $I^{(4)}(s,z;t_0)$ is entire in $s$ for fixed $z\notin \mc S_N$ and $t_0>0$. 
	
	We can now prove Propositions \ref{prop::mellin_transform_z_converges_infty_only} and \ref{prop::mellin_transform_z_converges}.
	\begin{proof}[Proof of Propositions \ref{prop::mellin_transform_z_converges_infty_only} and \ref{prop::mellin_transform_z_converges}]
		Notice that the lemmas above show that $I^{(1)}_\rho(s;t_0)$, $I^{(2)}_\rho(s,z;t_0)$, $I^{(3)}_\rho(s,z;t_0)$, and $I^{(4)}(s,z;t_0)$ are entire functions of $s$ assuming $y$ and $t_0$ are fixed. Using the decompositions given in (\ref{eqn::decompose_L_N,z}) and (\ref{eqn::decompose_L_N,-1/Nz}), it follows that the integrals are well-defined meromorphic functions of $s$, whose only pole is the obvious one at $s=1$. The independence from $t_0$ can then be checked by differentiating the integrals defining $L_{N,z}(s)$ and $L_{N,-1/Nz}(s)$. Finally, the functional equation can be established by comparing (\ref{eqn::decompose_L_N,z}) and (\ref{eqn::decompose_L_N,-1/Nz}) with $t_0 = N^{-1/2}$.
	\end{proof}
	
	\subsection{Proof of Lemmas \ref{lem::I-bessel_piece_X1}-\ref{lem::I-bessel_piece_X2_n_le_m_int_from_1/y to a}}\label{sctn::I2_s,z,t0}
	We now prove the four lemmas. For all four lemmas, we use the Weil bound to get \[\left| \sqrt{\frac{m}{n}} \frac{K(m,-n;c)}{c} I_1\left(\frac{4\pi \sqrt{mn}}{c} \right) e^{2\pi i n z}\right| \le \sqrt{\frac{m}{n}} \frac{\tau(c) \sqrt{\gcd(m,n,c)}}{\sqrt c} \left|I_1\left(\frac{4\pi \sqrt{mn}}{c} \right)\right| e^{-2\pi n y} .\]
	
	Each of the four lemmas will now be proved by bounding the sum over $n$ and $c$ of the right hand side, followed by bounding the sum over $m$.
	
	\begin{proof}[Proof for Lemma \ref{lem::I-bessel_piece_X1}]
		Using the fact that $|I_1(x)|\ll x$ for $x<C$, the inner sum over $n$ and $c$ given in the statement of Lemma \ref{lem::I-bessel_piece_X1} can be bounded against
		\begin{align*}
			&\ll \sum_{\sqrt{mn}<C\sqrt{\ell_\rho}c/4\pi}\sqrt{\frac{m}{n}} \frac{\tau(c) \sqrt{\gcd(m,n,c)}}{\sqrt c}\left(\frac{4\pi \sqrt{mn}}{c \sqrt{\ell_\rho}} \right) e^{-2\pi n y} \\
			&\ll_{\ell_\rho} \sum_{\sqrt{mn}<C \sqrt{\ell_\rho} c/4\pi} m\sqrt{n} \frac{\tau(c)}{c^{3/2}} e^{-2\pi n y} \ll m \mr{Li}_{-1/2}(e^{-2\pi y}).
		\end{align*}
		
		Here, we used the fact that \[\sum_{c\ge 1} \frac{\tau(c)}{c^2} = \zeta\left(\frac32\right)^2\]
		is a constant.
		Thus the required integral is bounded against
		
		\begin{align*}
			&\ll_{\ell_\rho} \mr{Li}_{-1/2}(e^{-2\pi y}) \int_{a}^\infty t^{\sigma - 1} \sum_{m\ge 1} me^{-2\pi m t} dt = \mr{Li}_{-1/2}(e^{-2\pi y}) \int_{a}^\infty t^{\sigma - 1} \frac{e^{2\pi t}}{(e^{2\pi t} - 1)^2} dt \\
			&\ll \frac{\mr{Li}_{-1/2}(e^{-2\pi y})}{(e^{2\pi a} - 1)^2} \int_{a}^\infty t^{\sigma - 1} e^{-2\pi t} dt = \frac{\mr{Li}_{-1/2}(e^{-2\pi y})}{(e^{2\pi a} - 1)^2} \frac{\Gamma(\sigma, 2\pi a)}{(2\pi)^\sigma} 
		\end{align*}
		for $a\ge 1/y$. However, we have $\mr{Li}_{-1/2}(e^{-2\pi y}) = O(e^{-2\pi y})$ as $y\to \infty$, whereas $(e^{2\pi a} - 1)^{-2} = O(a^{-2})$ as $a \to 0$. We thus have locally uniform convergence in $s$. Also, since we have assumed that $a$ decays at most like a rational function, the result follows.
	\end{proof}
	
	For the remaining three lemmas, note that the expression inside the $I$-Bessel function is greater than $C$, and so the terms inside the sum may be bounded above by 
	\begin{align*}
		&\ll \sqrt{\frac{m}{n}} \frac{\tau(c) \sqrt{\gcd(m,n,c)}}{\sqrt c} \frac{\ell_\rho^{1/4}\sqrt c}{(mn)^{1/4}} \exp\left(\frac{4\pi \sqrt{mn}}{\sqrt{\ell_\rho} c} \right) e^{-2\pi n y} e^{-2\pi m t} \\
		&\ll_{\ell_\rho} \frac{m^{1/4}}{n^{3/4}} \min\{\sqrt m, \sqrt n\} \tau(c)\exp\left(\frac{4\pi \sqrt{mn}}{\sqrt{\ell_\rho}} \right)e^{-2\pi n y} e^{-2\pi m t}. 
	\end{align*}
	Thus, the sum over $c$ is bounded above by 
	\[\ll_{\ell_\rho} \frac{m^{1/4}}{n^{3/4}} \min\{\sqrt m, \sqrt n\}\exp\left(\frac{4\pi \sqrt{mn}}{\sqrt{\ell_\rho} c} \right)e^{-2\pi n y} e^{-2\pi m t} \sum_{c<4\pi \sqrt{mn}/C \sqrt{\ell_\rho}}\tau(c),\]
	but since $\sum_{c\le x} \tau(x) = O(x\log x)$, this is bounded above by
	\begin{align}
		&\ll_{\ell_\rho} \frac{m^{1/4}}{n^{3/4}} \min\{\sqrt m, \sqrt n\}\sqrt{mn} \log(mn) \exp \left(\frac{4\pi \sqrt{mn}}{\sqrt{\ell_\rho}}\right) e^{-2\pi n y} e^{-2\pi m t} \nonumber \\
		&\ll_{\ell_\rho} \frac{m^{3/4}}{n^{1/4}} \min\{\sqrt m, \sqrt n\}\log(mn) \exp \left(\frac{4\pi \sqrt{mn}}{\sqrt{\ell_\rho}}\right) e^{-2\pi n y} e^{-2\pi m t} \label{eqn::bound_terms_when_I1bessel_exp}
	\end{align}
	
	We now use this bound to prove the remaining three lemmas. 
	
	\begin{proof}[Proof for Lemma \ref{lem::I-bessel_piece_X2_n_ge_m}]
		Using the bound given in (\ref{eqn::bound_terms_when_I1bessel_exp}), the sum over $m,n,c$ is bounded against 
		\begin{align*}
			&\ll_{\ell_\rho} \sum_{m\le \ell_\rho n} \frac{m^{3/4}}{n^{1/4}} \sqrt{m} \log(mn) e^{4\pi \sqrt{mn}/\sqrt{\ell_\rho}}e^{-2\pi n y} e^{-2\pi m t} \\
			&\ll_{\ell_\rho, \delta} \sum_{n\ge 1} n^{1+\delta} e^{2\pi n (2- y)} \sum_{1\le m\le n} e^{-2\pi m t} \\
			&\ll_\delta \frac{1}{e^{2\pi t} - 1} \sum_{n\ge 1} n^{1+\delta} e^{-2\pi n(y-2)} = \frac{1}{e^{2\pi t} - 1} \mr{Li}_{-1-\delta}(e^{-2\pi (y-2)})
		\end{align*}
		for any $\delta>0$, which implies that the required integral is bounded against \[\ll_{\ell_\rho, \delta} \mr{Li}_{-1-\delta} (e^{-2\pi(y- 2)}) \int_{a}^\infty \frac{t^{\sigma - 1}}{e^{2\pi t} - 1} dt.\]
		We now need to bound this integral over $t$. If $a\ge 1$, then we have \[\int_{a}^\infty \frac{t^{\sigma - 1}}{e^{2\pi t} - 1} dt \le \frac{1}{1 - e^{-2\pi \cdot 1}} \int_a^\infty t^{\sigma-1} e^{-2\pi t} dt \le \frac{1}{1 - e^{-2\pi}} \frac{\Gamma(\sigma, 2\pi)}{(2\pi)^\sigma}. \]
		If $a<1$, then 
		\begin{align*}
			\int_{a}^\infty \frac{t^{\sigma - 1}}{e^{2\pi t} - 1} dt &\le \frac{1}{e^{2\pi a} - 1} \int_{a}^1t^{\sigma - 1}dt + \frac{1}{1 - e^{-2\pi}} \int_1^\infty t^{\sigma-1} e^{-2\pi t} dt \\
			&\le \frac{1}{1 - e^{-2\pi}}\frac{\Gamma(\sigma, 2\pi)}{(2\pi)^\sigma} + \frac{1}{e^{2\pi a} - 1} \cdot \begin{cases}
				\sigma^{-1} (1 - a^\sigma) & \text{if } \sigma \ne 0,\\
				-\log a & \text{if } \sigma = 0.
			\end{cases} 
		\end{align*}
		In any case, we have bounded the required integral by $\mr{Li}_{-1-\delta}(\exp(2\pi(2-y)))$ times something that grows at most polynomially in $y$ as $y\to \infty$, and moreover this bound is locally uniform in $s$. The result follows.
	\end{proof}
	
	\begin{proof}[Proof for Lemma \ref{lem::I-bessel_piece_X2_n_le_m_int_from_2<a_to_y}]
		The sum over $m,n,c$ is bounded against 
		\begin{align*}
			&\ll_{\ell_\rho} \sum_{m>\ell_\rho n} \frac{m^{3/4}}{n^{1/4}} \sqrt n e^{4\pi m/\ell_\rho} e^{-2\pi n y}e^{-2\pi m t} \log{mn} \\
			&\ll_{\ell_\rho, \delta} \sum_{m\ge 1} m^{1+\delta} e^{2\pi m(2/\ell_\rho-t)} \sum_{1\le n\le m/\ell_\rho} e^{-2\pi n y} 
			\ll_\delta \frac1{e^{2\pi y} - 1} \sum_{m\ge 1} m^{1+\delta} e^{2\pi m(2/\ell_\rho-t)}
		\end{align*}
		for any $\delta>0$. Since the above series is a polylogarithm, which is integrable for $t\in [a, \infty]$, dominated convergence implies that the required integral is bounded above by 
		\begin{align*}
			&\ll_{\ell_\rho, \delta} \frac1{e^{2\pi y} - 1} \sum_{m\ge 1} m^{1+\delta} \int_a^\infty t^{\sigma-1} e^{-2\pi m(t- 2/\ell_\rho)} dt 
			= \frac1{e^{2\pi y} - 1} \sum_{m\ge 1} m^{1+\delta} e^{4\pi m/\ell_\rho} \frac{\Gamma(\sigma, 2\pi m a)}{(2\pi m)^\sigma} \\
			& \ll \frac1{e^{2\pi y} - 1} \sum_{m\ge 1}  m^\delta a^{\sigma - 1} e^{-2\pi m (a-2/\ell_\rho)} 
			= \frac1{e^{2\pi y} - 1} a^{\sigma - 1}\mr{Li}_{-\delta}(e^{-2\pi (a-2/\ell_\rho)}).
		\end{align*}
		Here, we used the fact that $\Gamma(\sigma, x) \sim x^{\sigma - 1} e^{-x}$ as $x\to \infty$. The lemma follows as before. 
	\end{proof}
	
	\begin{proof}[Proof for Lemma \ref{lem::I-bessel_piece_X2_n_le_m_int_from_1/y to a}]
		We fix $k\in \N$ and $A>0$ such that $a \ge 2Ay^{-k}$. Since $y>2$, we may fix $k$ large enough so that $y^{k+1}>2A$ for all $y>2$. Then, by (\ref{eqn::bound_terms_when_I1bessel_exp}), and the fact that $n<m$, the required integral is bounded above by 
		\begin{equation}\label{eqn::I1bessel_integral_1/y-a_splitting}
			\begin{split}
				\ll_{\ell_\rho} \int_{a}^b t^{\sigma-1} \sum_{n\ge 1, m> (y^{k+2}/ A)^2\ell_\rho n}& m\log m e^{4\pi \sqrt{mn/\ell_\rho}} e^{-2\pi n y}e^{-2\pi m t/\ell_\rho} dt \\
				&+ \int_a^b t^{\sigma-1} \sum_{n\ge 1, (y^{k+2}/ A)^2 \ell_\rho n > m> \ell_\rho n} m \log m e^{4\pi \sqrt{mn/\ell_\rho}} e^{-2\pi n y}e^{-2\pi m t/\ell_\rho} dt.
			\end{split}
		\end{equation}
		The integrand for the first integral is
		\begin{align*}
			&= t^{\sigma-1} \sum_{n\ge 1} e^{-2\pi n y} \sum_{m> (y^{k+2}/ A)^2 \ell_\rho n} m \log m e^{-2\pi m (t - 2\sqrt{\ell_\rho n/m})/\ell_\rho} \\
			&\ll_\delta t^{\sigma-1} \sum_{n\ge 1} e^{-2\pi n y} \sum_{m> (y^{k+2}/ A)^2\ell_\rho n} m^{1+\delta} e^{-2\pi m (t - 2A/y^{k+2})/\ell_\rho} \\
			&\ll_\delta \frac{t^{\sigma - 1}}{e^{2\pi y} - 1} \mr{Li}_{-1-\delta}(e^{-2\pi (t - 2A/y^{k+2})/\ell_\rho})
		\end{align*}
		for any $\delta>0$. Thus, the first integral in (\ref{eqn::I1bessel_integral_1/y-a_splitting}) is bounded above by 
		\[\ll_\delta \int_a^b \frac{t^{\sigma - 1}}{e^{2\pi y} - 1} \mr{Li}_{-1-\delta}(e^{-2\pi (t - 2A/y^{k+2})/\ell_\rho}) dt
		\ll \frac{(b-a) \max\{a^{\sigma - 1}, b^{\sigma-1}\}}{e^{2\pi y} - 1}  \mr{Li}_{-1-\delta}\left( \exp \left( -\frac{2\pi (y^{k+1} - 2A)}{\ell_\rho y^{k+2}}\right) \right),\]
		where we use the fact that $\mr{Li}_p(e^{-w})$ is a decreasing function of $w$ (for $w>0$). Now, from (9.3) of \cite{wood_polylogs}, we have \[\mr{Li}_{-1-\delta}(e^{-w}) = O_\delta(w^{-2-\delta})\]
		as $w\to 0$, and so the above bound is essentially a rational function in $y$ times $(e^{2\pi y} - 1)^{-1}$, and so goes to zero as $y\to \infty$.
		
		Finally, for the remaining piece, note that for any $\delta>0$ we have
		\begin{align*}
			Y(t) &:= \sum_{n\ge 1, (y^{k+2}/A)^2 \ell_\rho n > m>\ell_\rho n} m \log m e^{4\pi \sqrt{mn/\ell_\rho}} e^{-2\pi n y}e^{-2\pi m t/\ell_\rho} \\
			&\ll_{\delta, \ell_\rho} \left( \frac{y^{2k+4}}{A^2}\right)^{1 + \delta} \sum_{n\ge 1} n^{1+\delta} \sum_{(y^{k+2}/ A)^2 \ell_\rho n > m>\ell_\rho n} e^{4\pi \sqrt{mn/\ell_\rho}} e^{-2\pi n y} e^{-2\pi m t/\ell_\rho} \\
			&= \left( \frac{y^{k+2}}{A}\right)^{2 + 2\delta} \sum_{n\ge 1} n^{1+\delta} \sum_{(y^{k+2}/ A)^2\ell_\rho n > m>\ell_\rho n} e^{-2\pi (\sqrt{m/\ell_\rho} - \sqrt n)^2} e^{-2\pi n (y-1)} e^{2\pi m (1 - t)/\ell_\rho} \\
			&\le \left( \frac{y^{k+2}}{\epsilon}\right)^{2 + 2\delta} \sum_{n\ge 1} n^{1+\delta} e^{-2\pi n (y-1)}  e^{-2\pi n ((y^{k+2}/A) - 1)^2}\sum_{(y^{k+2}/ A)^2\ell_\rho n > m>\ell_\rho n} e^{2\pi m (1 - t)/\ell_\rho} .
		\end{align*}
		To handle this last geometric sum, we need to split the integral into two parts; however, if $b\le1$ then the first part is not necessary, while if $a\ge1$ then the second part is not necessary
		\begin{itemize}
			\item For $t>1$, notice that we can bound by 
			\begin{align*}
				Y(t) &\ll_{\delta, \ell_\rho} \left( \frac{y^{k+2}}{A}\right)^{2+ 2\delta} \sum_{n\ge 1} n^{1+\delta} e^{-2\pi n (y-1)}  e^{-2\pi n ((y^{k+2}/A) - 1)^2} \cdot \frac{y^{2k+4}}{A^2}n e^{-2\pi (t-1)} \\
				&= \left( \frac{y^{k+2}}{A}\right)^{4 + 2\delta} e^{-2\pi (t-1)} \mr{Li}_{-2-\delta} \left(\exp\left(-2\pi (y - 1) - 2\pi \left(\frac {y^{k+2}}A - 1 \right)^2 \right) \right)
			\end{align*}
			and so
			\begin{align*}
				\int_{[a,b]\cap[1,\infty)} t^{\sigma - 1} Y(t) dt &\ll_{\delta, \ell_\rho} \left( \frac{y^{k+2}}{A}\right)^{4 + 2\delta}  \mr{Li}_{-2-\delta} \left(\exp\left(-2\pi (y - 1) - 2\pi \left(\frac {y^{k+2}}A - 1 \right)^2 \right) \right) \int_1^b t^{\sigma - 1} e^{-2\pi (t-1)} dt \\
				&\ll \left( \frac{y^{k+2}}{A}\right)^{4 + 2\delta} (b-1)\max\{1, b^{\sigma - 1}\} \cdot \mr{Li}_{-2-\delta} \left(\exp\left(-2\pi (y - 1) - 2\pi \left(\frac {y^{k+2}}A - 1 \right)^2 \right) \right)
			\end{align*}
			which goes to 0 as $y\to \infty$.
			
			\item For $t<1$, we sum the geometric series directly to get
			\begin{align*}
				Y(t) &\ll_{\delta,\ell_\rho } \left( \frac{y^{k+2}}{A} \right)^{2 + 2\delta} \sum_{n\ge 1} n^{1+\delta} e^{-2\pi n (y-1)}  e^{-2\pi n ((y^{k+2}/A) - 1)^2} \cdot \frac{e^{2\pi(1-t)n y^{2k+4}/A^2 } - e^{2\pi n(1-t)}}{e^{2\pi(1-t)} - 1} \\
				\begin{split}
					&\ll \frac{1}{e^{2\pi(1-t)} - 1} \left( \frac{y^{k+2}}{A}\right)^{2 + 2\delta} \Bigg(\sum_{n\ge 1} n^{1+\delta} \exp\left(-2\pi n (y-1) -2\pi n \left( \frac {y^{k+2}} A - 1\right)^2 + 2\pi(1-t)n \frac{y^{2k+4}}{A^2}\right) \\
					&\qquad \qquad \qquad \qquad \qquad \quad - \sum_{n\ge 1} n^{1+\delta} \exp\left(-2\pi n (y-1) -2\pi n \left( \frac {y^{k+2}} A - 1\right)^2 + 2\pi n(1-t)\right)\Bigg) 
				\end{split}\\
				&= \frac{\left(y^{k+2}/A\right)^{2 + 2\delta}}{e^{2\pi(1-t)} - 1} \left(\mr{Li}_{-1 - \delta}\left(e^{-2\pi(y - \frac{2}{A} y^{k+2} + \frac{1}{A^2} ty^{2k+4})} \right) - \mr{Li}_{-1-\delta}\left(e^{-2\pi(y - \frac{2}{A} y^{k+2} + \frac{1}{A^2} y^{2k+4} - (1-t))} \right) \right).
			\end{align*}
			Here, notice that our assumptions on $y$, $a$, $k$, and $A$ implies that for $t\ge a\ge 2Ay^{-k}$, \[y - \frac{2}{A} y^{k+2} + \frac{1}{A^2} ty^{2k+4} \ge y - \frac{2}{A} y^{k+2} + \frac{2}{A} y^{k+4} > 0\]
			and \[y - \frac{2}{A} y^{k+2} + \frac{1}{A^2} y^{2k+4} - (1-t) > (y-1) \frac{(y^{k+2} - 2A)y^{k+2}}{A^2}>0,\]
			and so the above polylogarithms are indeed well-defined. Moreover, notice that the simple zero at $t=1$ of $e^{2\pi(1-t)}-1$ in the denominator cancels with the zero in the numerator (the two polylogarithm terms are equal at $t=1$), and so the above expression is bounded as $t\to 1^-$. Thus the above expression is in fact integrable over $[a, 1]$, and so by dominated convergence 
			\begin{align*}
				\int_{a}^1 t^{\sigma - 1} &Y(t) dt \ll_{\delta, \ell_\rho} \left( \frac{y^{k+2}}{A}\right)^{2 + 2\delta} \sum_{n\ge 1} n^{1+\delta} e^{-2\pi n (y-1)}  e^{-2\pi n ((y^{k+2}/A) - 1)^2}  \\
				&\qquad \qquad \qquad \qquad \qquad \qquad \qquad \cdot\sum_{(y^{k+2}/ A)^2\ell_\rho n > m>\ell_\rho n} \int_a^1 t^{\sigma - 1} e^{2\pi m (1 - t)/\ell_\rho} dt \\
				&\ll \left( \frac{y^{k+2}}{A}\right)^{2 + 2\delta} \sum_{n\ge 1} n^{1+\delta} \exp\left(-2\pi n \left(y-\tfrac{2y^{k+2}}A + \tfrac{y^{2k+4}}{A^2} \right)\right)   \\
				&\qquad \qquad \qquad \qquad \qquad \qquad \qquad \cdot \sum_{(y^{k+2}/ A)^2 n \ell_\rho > m>\ell_\rho n} \left(1 - a\right) \max\{1, a^{\sigma-1}\} e^{2\pi m (1 - a)/\ell_\rho} \\
				&\ll_{\ell_\rho} \left( \frac{y^{k+2}}{A}\right)^{2 + 2\delta}(1-a)\max\{1, a^{\sigma-1}\}  \sum_{n\ge 1} n^{1+\delta} \exp\left(-2\pi n \left(y-\tfrac{2y^{k+2}}A + \tfrac{y^{2k+4}}{A^2} \right)\right)  \\
				&\qquad \qquad \qquad \qquad\qquad \qquad \qquad \qquad \qquad \qquad\cdot e^{2\pi (y^{k+2}/A)^2n (1 - a)} \frac{y^2}{\epsilon^2} n\\
				&= \left( \frac{y^{k+2}}{A}\right)^{4 + 2\delta} (1-a) \max\{1, a^{\sigma-1}\}  \sum_{n\ge 1} n^{2+\delta} \exp\left(-2\pi n \left(y-2\frac{y^{k+2}}A + \frac{y^{2k+4}}{A^2 }- \frac{y^{2k+4}}{A^2}(1 - a)\right) \right) \\
				&= \left( \frac{y}{\epsilon}\right)^{4 + 2\delta} \left(1 - \frac1y \right) \max\{1, y^{1-\sigma}\} \mr{Li}_{-2-\delta} \left(\exp\left(-2\pi \left(y-2\frac{y^{k+2}}A + a\frac{y^{2k+4}}{A^2 } \right) \right) \right)
			\end{align*}
			and this bound goes to 0 as $y \to \infty$. 
		\end{itemize} 
		This establishes the lemma.
	\end{proof}

	\section{Proof of Proposition \ref{prop::mellin_transform_at_cusps}}\label{sctn::continue_series_jNm_infty}
	\begin{proof}[Proof of Proposition \ref{prop::mellin_transform_at_cusps} (1)-(3)]
		If these integrals are well-defined, then the independence from $t_0$ follows at once by simply differentiating the two expressions. To prove convergence of the integrals, we use the substitution $t\mapsto \frac1{Nt}$ followed by the functional equation given in Corollary \ref{cor::eisenstein_func_eqn} and then (\ref{eqn::I1_as_regularized_mellin}) to get
		\begin{align*}
			\int_{0}^{t_0} t^{s-1} \left(H_{N,i\infty }^*(it) + \frac{3}{c_N\pi t} \right)dt &= N^{1-s} \int_{1/Nt_0}^\infty t^{1-s} \left(-Nt^2H_{N,0}^*(it) + \frac{3Nt}{c_N\pi } \right)\frac{dt}{Nt^2} = - I_0^{(1)} \left(2-s;\frac1{t_0} \right) \\
			\int_{0}^{t_0} t^{s-1} \left(H_{N,0 }^*(it) - \frac1{Nt^2} + \frac{3}{c_N\pi t} \right)dt &= N^{1-s} \int_{1/Nt_0}^\infty t^{1-s} \left(-Nt^2H_{N,i\infty}^*(it) - Nt^2 + \frac{3Nt}{c_N\pi } \right) \frac{dt}{Nt^2} \\
			&= -N^{1-s} I_{i\infty}^{(1)} \left(2-s;\frac1{Nt_0} \right),
		\end{align*}
		where $c_N = [SL_2(\Z):\Gamma_0(N)]$. Hence 
		\begin{align}
			L_{N, i\infty}(s) &= \frac{t_0^s}{s} - \frac{6}{c_N\pi} \frac{t_0^{s-1}}{s-1} -I_0^{(1)} \left(2-s;\frac1{t_0} \right) + I_{i\infty}^{(1)}(s,t_0) \label{eqn::H_infty_L_function}\\
			L_{N, 0}(s) &= \frac1N \frac{t_0^{s-2}}{s-2} - \frac{6}{c_N\pi} \frac{t_0^{s-1}}{s-1} - N^{1-s}I_{i \infty }^{(1)} \left(2-s;\frac1{Nt_0} \right) + N^{1-s} I_{0}^{(1)}(s,Nt_0) \label{eqn::H_zero_L_function}.
		\end{align}
		Lemma \ref{lem::I1_convergence} implies (1) immediately. Part (2) follows by taking the limit as $t_0\to 0^+$ in (\ref{eqn::H_infty_L_function}) and (\ref{eqn::H_zero_L_function}), and using Lemma \ref{lem::I1_convergence}. Part (3) follows by setting $t_0 = 1$ in (\ref{eqn::H_infty_L_function}) and $t_0 = \frac1N$ in (\ref{eqn::H_zero_L_function}) and comparing expressions. 
	\end{proof}
	
	Let us now find the local factors in the Euler product of $L_{N,i\infty}(s)$ and $L_{N,0}(s)$. In light of Proposition \ref{prop::mellin_transform_at_cusps}(3), the factorization into an Euler product of $L_{N,0}(s)$ can easily be calculated from that of $L_{N, i\infty}(s)$. To find the factorization into an Euler product for $L_{N, i\infty}(s)$, we first require the following lemma, which follows easily by looking at the Euler product of each side. Here, $\zeta(s)$ is the usual Riemann zeta function.
	
	\begin{lemma}\label{lem::dirichletseries_co-prime_integers}
		For any $N\in \N$, we have \[\sum_{a\ge 1, \gcd(a, N)=1} \frac{1}{a^s} = \prod_{p\nmid N} \left(1 - p^{-s} \right) = \zeta(s) \prod_{p|N} \left(1 - p^{-s} \right)\]
	\end{lemma}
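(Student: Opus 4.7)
The plan is to prove this identity by appealing directly to the Euler product expansion of the Riemann zeta function, since the hint in the paper (``follows easily by looking at the Euler product of each side'') already points in this direction. The only thing requiring care is a standard convergence justification on the appropriate half-plane.

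First, I would establish absolute convergence of the Dirichlet series on the left for $\Re s > 1$, which is immediate from comparison with $\zeta(s)$. On this half-plane, unique factorization of positive integers into prime powers together with absolute convergence allows us to write
\[\sum_{\substack{a\ge 1 \\ \gcd(a,N)=1}} \frac{1}{a^s} = \prod_{p \nmid N}\left(\sum_{k\ge 0} \frac{1}{p^{ks}}\right) = \prod_{p\nmid N} \frac{1}{1 - p^{-s}},\]
exactly as in the derivation of the classical Euler product for $\zeta(s)$, except that we only include primes not dividing $N$. This gives the first equality (up to a harmless reciprocal that I read as implicit in the statement, since the left side clearly diverges if the right side is a finite product of $(1-p^{-s})$).

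Next, I would compare this with the full Euler product
\[\zeta(s) = \prod_{p} \frac{1}{1 - p^{-s}} = \left(\prod_{p|N} \frac{1}{1-p^{-s}}\right)\left(\prod_{p\nmid N} \frac{1}{1-p^{-s}}\right),\]
so that multiplying both sides by $\prod_{p|N}(1-p^{-s})$ isolates the right-hand factor and yields the second equality
\[\prod_{p\nmid N} \frac{1}{1 - p^{-s}} = \zeta(s) \prod_{p|N}(1 - p^{-s}).\]
Both sides are meromorphic on $\C$, so the identity extends from $\Re s > 1$ to all $s\in \C$ by analytic continuation, giving the claim in full generality.

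There is no real obstacle here: the identity is a bookkeeping exercise with Euler products, and the only subtle point (which one should mention explicitly) is that the manipulations are justified on a half-plane of absolute convergence before invoking analytic continuation to state the result meromorphically on $\C$.
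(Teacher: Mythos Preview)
Your proposal is correct and follows exactly the approach the paper indicates (``follows easily by looking at the Euler product of each side''); the paper gives no further details beyond that hint, so your argument is in fact more complete than what appears there. You are also right to flag the missing reciprocal in the middle expression: the intended statement is $\prod_{p\nmid N}(1-p^{-s})^{-1}$, as the later use of the lemma in the proof of Proposition~\ref{prop::mellin_transform_at_cusps}(4) confirms.
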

%
	\begin{proof}[Proof of Proposition \ref{prop::mellin_transform_at_cusps}(4)]
		First note that 
		\begin{align*}
			j_{N,m}(i\infty) &= 4\pi^2 m \sum_{N|c\ge 1} \frac{K(m,0;c)}{c^2} = 24m \left(\sum_{k\ge 1} \frac{1}{k^2} \right) \left( \frac{1}{N^2} \sum_{k \ge 1}  \frac{K(m,0;Nk)}{k^2}\right) = \frac{24m}{N^2} \sum_{k\ge1} \frac{1}{k^2} \sum_{d|k} c_{Nd}(m)
		\end{align*}
		where $c_q(m) = K(m,0;q)$ is Ramanujan's sum. Thus, by absolute convergence of the Dirichlet series for $\Re{s}> \frac52$, and using the Dirichlet series for $c_q(m)$ (given in \cite{ramanujan_sums}), we have
		
		\begin{align*}
			\sum_{m\ge 1} \frac{j_{N,m}(i\infty)}{m^s} &= \frac{24}{N^2} \sum_{k\ge 1} \frac{1}{k^2} \sum_{d|k} \sum_{m\ge 1} \frac{c_{Nd}(m)}{m^{s-1}} = \frac{24}{N^2} \sum_{k\ge 1} \frac{1}{k^2} \sum_{d|k} \zeta(s-1) \sum_{g|Nd} \mu(g) \left( \frac{Nd}{g}\right)^{2-s} \\
			&= 24 \zeta(s-1) N^{-s} \sum_{k\ge 1} \sum_{d|k} \frac{d^{2-s}}{k^2} \sum_{g|Nd} \mu(g) g^{s-2}
			= 24 \zeta(s-1) N^{-s} \sum_{d\ge 1} \sum_{k\ge 1} \frac{1}{k^2d^s} \sum_{g|Nd} \mu(g) g^{s-2} \\
			&= 24 \zeta(s-1) N^{-s} \left( \prod_{p} \frac{1}{1 - p^{-2}} \right) \sum_{d\ge 1} \frac{1}{d^s} \sum_{g|Nd} \mu(g) g^{s-2}.
		\end{align*}
		To evaluate the sum over $d$, we use the multiplicativity of $\mu$ to get 
		\begin{align*}
			\sum_{d\ge 1} \frac{1}{d^s} \sum_{g|Nd} \mu(g) g^{s-2} &= \sum_{d\ge 1} \frac{1}{d^s} \prod_{p|Nd} (\mu(1)1^{s-2} + \mu(p) p^{s-2}) = \sum_{d\ge 1} \frac{1}{d^s} \prod_{p|Nd} (1 - p^{s-2}) \\
			&= \left( \prod_{p|N} (1 - p^{s-2})\right) \sum_{d\ge 1} \frac{1}{d^s} \prod_{p|d, p\nmid N} (1 - p^{s-2}) \\
			&= \left( \prod_{p|N} (1 - p^{s-2})\right) \prod_{p| N}\left(\sum_{\ell \ge 0} \frac{1}{(p^\ell)^s} \right)\sum_{d'\ge 1, (d', N) = 1} \frac{1}{d'^s} \prod_{p|d'} (1 - p^{s-2}) \\
			&= \left( \prod_{p|N} (1 - p^{s-2})\right) \prod_{p| N}\left(\frac{1}{1 - p^{-s}} \right) \sum_{d'\ge 1, (d', N) = 1} \frac{1}{d'^s} \sum_{\delta|d'} \mu( \delta) \delta^{s-2}
		\end{align*}
		where we rewrote $d$ as $d = d' \prod_{p|N} p^{\ell(p)}$ with $\gcd(N,d')=1$. Interchanging the sums over $d'$ and $\delta$ then gives 
		\begin{align*}
			\sum_{d\ge 1} \frac{1}{d^s} \sum_{g|Nd} \mu(g) g^{s-2} &= \prod_{p| N}\left(\frac{1 - p^{s-2}}{1 - p^{-s}} \right) \sum_{\delta\ge 1, \gcd(\delta, N) = 1} \mu( \delta) \delta^{s-2} \sum_{\delta|d'\ge 1, \gcd(d', N)=1} \frac{1}{d'^s} \\
			&= \prod_{p| N}\left(\frac{1 - p^{s-2}}{1 - p^{-s}} \right) \sum_{\delta\ge 1, \gcd(\delta, N) = 1}  \frac{\mu( \delta)}{\delta^2} \sum_{a\ge 1, \gcd(a, N)=1} \frac{1}{a^s} 
		\end{align*}
		By Lemma \ref{lem::dirichletseries_co-prime_integers}, we have \[\sum_{d\ge 1} \frac{1}{d^s} \sum_{g|Nd} \mu(g) g^{s-2} = \zeta(s)\prod_{p| N}\left(1 - p^{s-2}\right) \sum_{\delta\ge 1, \gcd(\delta, N) = 1}  \frac{\mu( \delta)}{\delta^2} = \zeta(s)\prod_{p| N}\left(1 - p^{s-2}\right) \prod_{p\nmid N} \left(1 - \frac1{p^2} \right).\]
		Therefore we have 
		\begin{align*}
			\sum_{m\ge 1} \frac{j_{N,m}(i\infty)}{m^s} &= 24 \zeta(s-1) N^{-s} \left( \prod_{p} \frac{1}{1 - p^{-2}} \right) \zeta(s)\prod_{p| N}\left(1 - p^{s-2}\right)  \prod_{p\nmid N} \left(1- \frac{1}{p^2} \right) \\
			&= 24 \zeta(s) \zeta(s-1) N^{-s} \prod_{p| N}\frac{1 - p^{s-2}}{1 - p^{-2}}
		\end{align*}
		which is the required continuation. 
	\end{proof}
	
	\begin{proof}[Proof of Corollary \ref{cor::explicit_formula_for_j_N,n_rho}]
		The proof for $j_{N,n}(0)$ follows directly from 
		\[\sum_{n\ge 1} \frac{j_{N,n}(0)}{n^s} = \frac{24}{N} \prod_{p|N} \frac{1}{1-p^{-2}} \zeta(s)\zeta(s-1)\prod_{p|N} (1 - p^{-s}) = \frac{24}{N} \prod_{p|N} \frac{1}{1-p^{-2}} \left(\prod_{p\nmid N} \sum_{j\ge 0} \frac{\sigma(p^j)}{p^{js}} \right) \left(\prod_{p| N} \sum_{j\ge 0} \frac{p^j}{p^{js}} \right).\]
		
		We now consider $j_{N,n}(i\infty)$. Expanding the analytic continuation given in Proposition \ref{prop::mellin_transform_at_cusps}(4) as an Euler product, it is easy to see that 
		\begin{align*}
			\sum_{m\ge 1} \frac{j_{N,m}(i\infty)}{m^s} &= 24N^{-s} \left( \prod_{p|N}\frac{1}{1 - p^{-2}} \right)  \left( \prod_{p\nmid N} \sum_{j\ge 0} \frac{\sigma(p^j)}{p^{js}}\right) \left(\prod_{p|N} \left(-p^{s-2} + \sum_{j\ge 0} \frac{\sigma(p^j) - p^{-2} \sigma(p^{j+1})}{p^{js}} \right) \right) \\
			&= \frac{24}{N^s} \left( \prod_{p|N}\frac{1}{1 - p^{-2}} \right)  \left( \sum_{\begin{smallmatrix}m\ge 1,\\ \gcd(m,N)=1 \end{smallmatrix}} \frac{\sigma(m)}{m^{s}}\right) \left(\sum_{d|N} \mu(d) d^{s-2} \prod_{p|N, p\nmid d} \sum_{j\ge 0} \frac{\sigma(p^j) - p^{-2} \sigma(p^{j+1})}{p^{js}}\right)
		\end{align*}
		The rightmost expression times $N^{-s}$ can be rewritten as
		\begin{align*}
			&N^{-2}\sum_{d|N} \mu\left(\frac Nd\right)d^{2-s} \prod_{p|d, p\nmid N/d} \sum_{j\ge 0} \frac{\sigma(p^j) - p^{-2} \sigma(p^{j+1})}{p^{js}} \\
			=& N^{-2} \sum_{d|N} \mu\left(\frac Nd\right) d^2 \sum_{\begin{smallmatrix}m\ge 1, \mbb P(m) \subseteq \mbb P(d) \\ \gcd(m, N/d)=1 \end{smallmatrix}} \frac{1}{(dm)^s} \sum_{g|m} \frac{\mu(g)}{g^2} \sigma(mg)\\
			=& N^{-2} \sum_{d|N} \mu\left(\frac Nd\right) d^2 \sum_{\begin{smallmatrix}m\ge 1, \mbb P(m) \subseteq \mbb P(N) \\ \gcd(m, N)=d \end{smallmatrix}} \frac{1}{m^s} \sum_{g|m/d} \frac{\mu(g)}{g^2} \sigma\left( \frac{mg}{d}\right) \\
			=& N^{-2} \sum_{m\ge 1, \mbb P(m) \subseteq \mbb P(N)} \frac{1}{m^s} \mu\left(\frac{N}{\gcd(m,N)}\right) \gcd(N,m)^2 \sum_{g|m/\gcd(m,N)} \frac{\mu(g)}{g^2} \sigma \left(\frac{mg}{\gcd(m,N)} \right),
		\end{align*} 
		where $\mbb P(n)$ is the set of prime factors of $n$. Comparing coefficients of the two Dirichlet series yields \[j_{N,n_1n_2}(i\infty) = \frac{24}{N^2} \left( \prod_{p|N}\frac{1}{1 - p^{-2}} \right) \sigma(n_1) \mu\left(\frac{N}{\gcd(n_2,N)}\right) \gcd(N,n_2)^2 \sum_{g|n_2/\gcd(n_2,N)} \frac{\mu(g)}{g^2} \sigma \left(\frac{n_2g}{\gcd(n_2,N)} \right).\]
		where $\gcd(n_1, n_2) = 1 = \gcd(n_1, N)$ and $\mbb P(n_2) \subseteq \mbb P(N)$. The multiplicativity of $\sigma$, and the fact that $\mu(g)$ is non-zero if and only if $g$ is square-free, then yields the corollary. 
	\end{proof}
		
	\section{Proof of Theorem \ref{thm::mellin_transform_converge_limit}}\label{sctn::proof_thm}

	\begin{proof}[Proof of Theorem \ref{thm::mellin_transform_converge_limit}]
		Using the decomposition of $L_{N,z}$ given in (\ref{eqn::decompose_L_N,z}) as well as the decomposition of $L_{N,i\infty}$ given in (\ref{eqn::H_infty_L_function}) with $t_0 = 1/y$, and then noticing that the $I^{(2)}$ and $I^{(3)}$ pieces are entire and go to zero as $y\to \infty$, we have \[L_{N,z}(s) = L_{N,i\infty}(s) - \frac{y^{-s}}{s} + I^{(4)}\left(s,z;\frac1y \right) + o_s(1).\]
		Thus it only remains to study the behaviour of $I^{(4)}(s, x+iy; 1/y)$ ($x\notin \Z$) as $y\to \infty$. We show that, for all $s\in \C$,
		\begin{equation}\label{eqn::exponential_int_1/y_to_infty}
			I^{(4)}\left(s,z, \frac1y \right) = \frac{y^{-s}-y^{s}}{s} + \sum_{j=1}^{\lfloor \sigma \rfloor} \frac{(s-1)_{j-1} y^{s-j}}{(2\pi)^j} \Big(\mr{Li}_j\big(e^{-2\pi i x}\big) + (-1)^j\mr{Li}_j\big( e^{2\pi i x}\big) \Big) + o_s(1)
		\end{equation}
		as $y\to \infty$, where $\sigma := \Re s$. In fact, we show that 
		\begin{align}
			-\int_{1/y}^y \frac{t^{s-1}}{e^{2\pi(t+iz)} - 1} dt &= \int_{1/y}^y t^{s-1}\sum_{n\ge 0}e^{2\pi n(t+iz)}dt = \frac{(y^s - y^{-s})}{s} + \sum_{j= 0}^{\lfloor \sigma \rfloor -1} \frac{(-1)^j(s-1)_j y^{s-1-j}}{(2\pi)^{j+1}} \mr{Li}_{j+1}(e(x)) + o_s(1) \label{eqn::exponential_int_1/y_to_y}\\
			\int_y^\infty \frac{t^{s-1}}{e^{2\pi(t+iz)} - 1} dt &= \int_y^\infty t^{s-1} \sum_{n\ge 1} e^{-2\pi n(t+iz)}dt =  \sum_{j= 0}^{\lfloor \sigma \rfloor -1} \frac{(s-1)_j y^{s-1-j}}{(2\pi)^{j+1}} \mr{Li}_{j+1}(e(-x)) + o_s(1) \label{eqn::exponential_int_y_to_infty}
		\end{align}
		as $y\to \infty$, where $e(x):= e^{2\pi i x}$ and the $o_s(1)$ piece is an entire function of $s$. By continuity, we may assume without loss of generality that $s\notin \Z$. Notice that (\ref{eqn::exponential_int_1/y_to_y}) and (\ref{eqn::exponential_int_y_to_infty}) imply (\ref{eqn::exponential_int_1/y_to_infty}) and thus the theorem. 
		
		We first prove equation (\ref{eqn::exponential_int_y_to_infty}). Note first that 
		\begin{align*}
			\sum_{n\ge 1} \left|((y+t)^{s-1} - y^{s-1}) e^{-2\pi n t}e(-nx) \right| &\le |(y+t)^{s-1} - y^{s-1}| \sum_{n\ge 1}e^{-2\pi n t} = \frac{|(t+y)^{s-1} - y^{s-1}|}{e^{2\pi t} - 1}
		\end{align*}
		and since \[\lim_{t\to 0^+} \frac{|(t+y)^{s-1} - y^{s-1}|}{e^{2\pi t} - 1} = \lim_{t\to 0^+}\frac{|(t+y)^{s-1} - y^{s-1}|}{t} \cdot\frac{t}{e^{2\pi t} - 1} = \frac{|s-1|y^{\sigma -2}}{2\pi},\]
		it follows that $ \frac{|(t+y)^{s-1} - y^{s-1}|}{e^{2\pi t} - 1}$ is integrable from $0$ to $\infty$. By dominated convergence we have 
		\begin{align*}
			\int_0^\infty \frac{(y+t)^{s-1} - y^{s-1}}{e^{2\pi t}e(x) - 1}dt &= \sum_{n\ge 1} e(-nx) \int_0^\infty \left( (y+t)^{s-1} - y^{s-1} \right) e^{-2\pi n t}dt \\
			&= \sum_{n\ge 1} e(-nx) \left( \frac{e^{2\pi n y}\Gamma(s, 2\pi n y)}{(2\pi n )^s} - \frac{y^{s-1}}{2\pi n } \right) \\
			&= \sum_{n\ge 1} \frac{e(-nx)y^{s-1}}{2\pi n} \left( \frac{\Gamma(s, 2\pi n y)}{(2\pi n y)^{s-1}e^{-2\pi n y}} - 1\right)
		\end{align*}
		Let $k = \max\{\lfloor \sigma \rfloor, 1\}$. By 6.5.32 of \cite{handbook_fns}, we have \[\left| \frac{\Gamma(s, 2\pi n y)}{(2\pi n y)^{s-1}e^{-2\pi n y}} - \sum_{j=0}^{k-1} \frac{(s-1)_j}{(2\pi n y)^j} \right| \ll \frac{|(s-1)_{k}|}{(2\pi n y)^{k}}\]
		and so 
		\[\sum_{n\ge 1} \left|\frac{e(-nx)y^{s-1}}{2\pi n} \left( \frac{\Gamma(s, 2\pi n y)}{(2\pi n y)^{s-1}e^{-2\pi n y}} - \sum_{j=0}^{k-1} \frac{(s-1)_j}{(2\pi n y)^j}\right) \right| \ll \sum_{n\ge 1} \frac{y^{\sigma - 1}}{n} \cdot \frac{|(s-1)_{k}|}{(2\pi n y)^{k}} = \frac{|(s-1)_{k}| \zeta(k+1)}{(2\pi)^{k}} \cdot y^{\sigma - k-1}\]
		where the bound is uniform in both $y$ and $s$. This implies that \begin{equation}\label{eqn::controlled_sum_of_incomplete_gamma_factors}
			\sum_{n\ge 1} \frac{e(-nx)y^{s-1}}{2\pi n} \left( \frac{\Gamma(s, 2\pi n y)}{(2\pi n y)^{s-1}e^{-2\pi n y}} - \sum_{j=0}^{k-1} \frac{(s-1)_j}{(2\pi n y)^j}\right)  = o(1)
		\end{equation}
		as $y\to \infty$, regardless of $s\in \C$. Now, notice that for $j\ge 0$, we have \[(s-1)_j \sum_{n\ge 1} \frac{e(-nx) y^{s-1-j}}{(2\pi n)^{j+1}} = \frac{(s-1)_j y^{s-1-j}}{(2\pi)^{j+1}} \mr{Li}_{j+1}(e(-x)) = \int_0^\infty \frac{(s-1)_j t^j y^{s-1-j}}{e^{2\pi t}e(x) - 1} dt.\]
		Hence, 
		\begin{align*}
			 \int_y^\infty \frac{t^{s-1}}{e^{2\pi(t+iz)} - 1} dt &= \int_0^\infty \frac{(y+t)^{s-1} - y^{s-1} }{e^{2\pi t}e(x) - 1} dt - \sum_{j=1}^{k-1} \int_0^\infty \frac{(s-1)_j t^j y^{s-1-j}}{e^{2\pi t}e(x) - 1}dt +\sum_{j= 0}^{k-1} \frac{(s-1)_j y^{s-1-j}}{(2\pi)^{j+1}} \mr{Li}_{j+1}(e(-x))   \\
			&= \sum_{j= 0}^{k-1} \frac{(s-1)_j y^{s-1-j}}{(2\pi)^{j+1}} \mr{Li}_{j+1}(e(-x)) + \sum_{n\ge 1} \frac{e(-nx)y^{s-1}}{2\pi n} \left( \frac{\Gamma(s, 2\pi n y)}{(2\pi n y)^{s-1}e^{-2\pi n y}} - \sum_{j=0}^{k-1} \frac{(s-1)_j}{(2\pi n y)^j}\right).
		\end{align*}
		By (\ref{eqn::controlled_sum_of_incomplete_gamma_factors}), the series on the right is $o(1)$ as $y\to \infty$. If $\sigma = 1$, then $k = \lfloor \sigma \rfloor$ and we have established (\ref{eqn::exponential_int_y_to_infty}). On the other hand, if $\sigma < 1$ then $k=1$, and so the required integral is just $\frac{1}{2\pi} y^{s-1} + o(1)$. However, for $\sigma<1$, we have $y^{s-1} = o(1)$ as well, and so the above calculation implies (\ref{eqn::exponential_int_y_to_infty}) in this case as well.
		
		We now prove (\ref{eqn::exponential_int_1/y_to_y}). Similar to above, it is easy to see that $ \frac{|(y-t)^{s-1} - y^{s-1}|}{e^{2\pi t} - 1}$ is integrable from $0$ to $y-1/y$. By dominated convergence,
		\begin{align*}
			\int_{1/y}^y (t^{s-1} - y^{s-1})\sum_{n\ge 1} e^{2\pi i n z} e^{2\pi n t} dt &= \int_0^{y-1/y} ((y-t)^{s-1} - y^{s-1})\sum_{n\ge 1} e(nx) e^{-2\pi n t} dt \\
			&=\sum_{n\ge 1} e(nx) \left( \int_0^{y-1/y} (y-t)^{s-1}e^{-2\pi n t}dt - \int_0^{y-1/y} y^{s-1}  e^{-2\pi n t}dt\right) \\
			&= \sum_{n\ge 1} e(nx)\left(\frac{e^{-2\pi n y}}{(2\pi n)^s} \int_{2\pi n / y}^{2\pi n y} t^{s-1}e^t dt - \frac{y^{s-1}}{2\pi n} \left( 1 - e^{-2\pi n(y-1/y)}\right)  \right).
		\end{align*}
		
		Now, for $\sigma > 0$ and for any $c>0$ we have  
		\begin{align*}
			\int_0^c t^{s-1} e^t dt &= c^s \int_0^1 t^{s-1} e^{ct} dt = c^s \int_0^1 e^{ct} t^{s-1} (1-t)^{(s+1) - s - 1} dt \\
			&= c^s \frac{\Gamma(s) \Gamma(s+1-s)}{\Gamma(s+1)} M(s, s+1, c) = \frac{c^s}{s} M(s,s+1,c) = \frac{c^s e^c}{s} M(1, s+1, -c)
		\end{align*}
		where $M(a,b,c)$ is Kummer's function of the first kind \cite[][{}(13.2.1)]{handbook_fns}. The last equality follows from \cite[][{}(13.1.27)]{handbook_fns}. Thus \[\int_{2\pi n/y}^{2\pi n y} t^{s-1} e^tdt = \frac{(2\pi n y)^s e^{2\pi n y}}{s} M(1, s+1, -2\pi n y) - \frac{(2\pi n)^s y^{-s}e^{2\pi n / y}}{s} M(1, s+1, -2\pi n /y).\]
		Since the left hand side is an entire function in $s$, and the right side meromorphic in $s$ with possible poles at $s\in \Z_{\le 0}$, it follows that the above identity holds for all $s\in \C$ (with the poles canceling). Thus,
		\[\begin{split}
			\int_{1/y}^y (t^{s-1} - y^{s-1}) \sum_{n\ge 1} e^{2\pi i n z} e^{2\pi n t} dt = \sum_{n\ge 1} e(nx)&\Bigg( \frac{y^s}{s} M(1, s+1, -2\pi n y) -\frac{y^{-s}e^{-2\pi n y}e^{2\pi n / y}}{s} M(1, s+1, -2\pi n /y)  \\
			& \qquad- \frac{y^{s-1}}{2\pi n} + \frac{y^{s-1}}{2\pi n}  e^{-2\pi n(y-1/y)}  \Bigg)
		\end{split}\]
		
		We now assume that $s\notin \Z$. As before let $k = \max\{\lfloor \sigma \rfloor, 1\}$. We then (formally) write 
		\[\begin{split}
			\int_{1/y}^y t^{s-1} \sum_{n\ge 0} e^{2\pi i n z} e^{2\pi i n t} dt 
			={}&\int_{1/y}^y t^{s-1}dt + y^{s-1}\int_{1/y}^y \sum_{n\ge 1} e^{2\pi i n z} e^{2\pi n t} dt + \sum_{n\ge 1}\frac{e(nx)y^{s-1}}{2\pi n}\sum_{j=1}^{k-1} \frac{(-1)^j (s-1)_j}{(2\pi n y)^j} \\
			&  + \labelunder{S_1}{\sum_{n\ge 1}\frac{e(nx)y^s}{s}\left(M(1,s+1,-2\pi n y) - \frac{s}{2\pi n y} \sum_{j=0}^{k-1} \frac{(-1)^j (s-1)_j}{(2\pi n y)^j} \right)} \\
			&-  \labelunder{S_2}{\sum_{n\ge 1}\frac{e(nx)e^{-2\pi n (y - 1/y)}}{sy^s} M\left(1, s+1, -\frac{2\pi n}{y} \right)}  + \labelunder{S_3}{\sum_{n\ge 1}\frac{e(nx)e^{-2\pi n (y - 1/y)} y^{s-1}}{2\pi n}} .
		\end{split}\]
		We analyze each of the terms on the right hand side separately.
		We directly have \[S_3 = \frac{y^{s-1}}{2\pi}\mr{Li}_1\left(e^{2\pi i z + 2\pi /y} \right).\]
		Also  
		\begin{align*}
			y^{s-1}\int_{1/y}^y \sum_{n\ge 1} e^{2\pi i n z} e^{2\pi n t} dt &= y^{s-1}\int_{1/y}^y \frac{e^{2\pi (t+iz)} dt}{1-e^{2\pi (t+iz)}} = y^{s-1}\int_0^{y-1/y} \frac{dt}{e^{2\pi t}e(-x) - 1} \\
			&= y^{s-1} \left(\int_0^\infty \frac{dt}{e^{2\pi t}e(-x) - 1} - \int_{y-1/y}^\infty \frac{dt}{e^{2\pi t}e(-x) - 1} \right) \\
			&= y^{s-1} \left(\int_0^\infty \frac{dt}{e^{2\pi t}e(-x) - 1} - \int_{0}^\infty \frac{dt}{e^{2\pi t}e^{2\pi (y-1/y)}e(-x) - 1} \right) \\
			&= y^{s-1} \left(\frac{1}{2\pi} \mr{Li}_1(e(x)) - \frac1{2\pi} \mr{Li}_1\left(e^{2\pi iz + 2\pi/y)} \right) \right) = \frac{y^{s-1}}{2\pi} \mr{Li}_1(e(x)) - S_3,
		\end{align*}
		where we used the integral representation of the polylogarithm. Next, 
		\begin{align*}
			\sum_{n\ge 1}\frac{e(nx)y^{s-1}}{2\pi n}\sum_{j=1}^{k-1} \frac{(-1)^j (s-1)_j}{(2\pi n y)^j} &= \sum_{j=1}^{k-1}  \frac{(-1)^j (s-1)_j y^{s-j-1} }{(2\pi)^{j+1}}\mr{Li}_{j+1}(e(x))\\
			&= \sum_{j=0}^{k-1}  \frac{(-1)^j (s-1)_j y^{s-j-1} }{(2\pi)^{j+1}}\mr{Li}_{j+1}(e(x)) - \frac{y^{s-1}}{2\pi} \mr{Li}_1(e(x)).
		\end{align*}
		For $S_1$, notice that 13.5.1 from \cite{handbook_fns} says \[M(1, s+1, -2\pi n y) = \frac{s}{2\pi n y} \left(\sum_{j=0}^{k-1} \frac{(-1)^j (s-1)_j}{(2\pi n y)^j} + O\left( \frac{(s-1)_{k}}{(2\pi ny)^{k}} \right) \right)\]
		and so 
		\begin{align*}
			\sum_{n\ge 1}\left|\frac{e(nx)y^{s-1}}{2\pi n}\left(\frac{M(1,s+1,-2\pi n y)}{s/2\pi n y} - \sum_{j=0}^{k-1} \frac{(-1)^j (s-1)_j}{(2\pi n y)^j} \right) \right| &\ll \sum_{n\ge 1}\frac{y^{\sigma - k-1} |(s-1)_{k}| }{(2\pi n)^{k+1}} = \frac{|(s-1)_{k}| \zeta(k+1)}{(2\pi)^{k+1}} y^{\sigma - k-1},
		\end{align*}
		which implies that $S_1 = o(1)$ as $y\to \infty$. It remains to check that $S_2$ converges and $S_2 =o(1)$ as $y\to \infty$. We break the sum into two as:
		\[\sum_{n\ge 1} \frac{e(nx)}{sy^s}e^{-2\pi n (y - 1/y)} M\left(1, s+1, -\frac{2\pi n}y \right) = \left(\sum_{n<Cy} + \sum_{n>Cy} \right) \frac{e(nx)}{sy^s}e^{-2\pi n (y - 1/y)} M\left(1, s+1, -\frac{2\pi n}y \right).\]
		for some fixed constant $C>0$ chosen so that $|M(1, s+1, -2\pi w)| \ll \frac{|s|}{w}$ holds for all real $w>C$. The first sum is bounded by
		\begin{align*}
			\sum_{n<Cy} \left|\frac{e(nx)}{sy^s}e^{-2\pi n (y - 1/y)} M\left(1, s+1, -\frac{2\pi n}y \right)\right| & \le \sum_{n<Cy} \frac{1}{|s|y^\sigma}e^{-2\pi n (y - 1/y)} \sum_{h \ge 0} \frac{(2\pi n/y)^h}{|(s+1)^{(h)}|} \\
			&= \frac1{|s|} \sum_{h \ge 0} \frac{(2\pi/y)^h}{ |(s+1)^{(h)}|y^\sigma}\sum_{n<Cy} n^h e^{-2\pi n (y - 1/y)} \\
			&\le \frac1{|s|} \sum_{h \ge 0} \frac{(2\pi/y)^h}{ |(s+1)^{(h)}|y^\sigma} (Cy) \cdot (Cy)^h e^{-2\pi (y - 1/y)} \\
			&= \frac{Cy^{1-\sigma}}{|s|} e^{-2\pi (y - 1/y)} \sum_{h \ge 0} \frac{(2\pi C)^h}{ |(s+1)^{(h)}|} \\
			&\ll_s \frac{y^{1-\sigma}}{|s|} e^{-2\pi (y - 1/y) } \sum_{h \ge 0} \frac{(1)^{(h)}}{ (|\sigma|+1)^{(h)}}\frac{(2\pi C)^h}{h!} \\
			&= \frac1{|s|} y^{1-\sigma} e^{-2\pi (y-1/y)} M(1, |\sigma|+1, 2\pi C) 
		\end{align*}
		where $\xi^{(h)} := \xi(\xi+1)\cdots (\xi + h-1)$ is the rising factorial. It follows that the above bound goes to 0 as $y\to \infty$. For the second sum, we get 
		\begin{align*}
			\sum_{n>Cy} \left|\frac{e(nx)}{sy^s}e^{-2\pi n (y - 1/y)} M\left(1, s+1, -\frac{2\pi n}y \right)\right| &\ll \sum_{n>Cy} \frac{1}{|s|y^\sigma}e^{-2\pi n (y - 1/y)} \frac{|s|}{n/y} \\
			&= y^{1-\sigma}\sum_{n>Cy} \frac1n e^{-2\pi n (y-1/y)} \\
			&\ll y^{1- \sigma} \mr{Li}_1\left(e^{-2\pi(y - 1/y)} \right)
		\end{align*}
		where the latter bound clearly goes to 0 as $y\to \infty$, regardless of $s\in \C\bs \Z$. Hence (\ref{eqn::exponential_int_1/y_to_y}) follows, finishing the proof.
	\end{proof}

	\printbibliography
\end{document}